\pgfplotsset{compat=newest} 
\newtheorem{theorem}{Theorem}[section]
\newtheorem{example}[theorem]{Example}
\newtheorem{proposition}[theorem]{Proposition}
\newtheorem{remark}[theorem]{Remark}
\newtheorem{definition}[theorem]{Definition}
\newtheorem{lemma}[theorem]{Lemma}
\newcommand{\N}{\mathbb{N}}
\newcommand{\R}{\mathbb{R}}
\newcommand{\mcG}{\mathcal{G}}
\newcommand{\mcN}{\mathcal{N}}
\newcommand{\mcH}{\mathcal{H}^M}
\newcommand{\mcHc}[2]{\mathcal{H}_{#1}^{#2}}
\newcommand{\mcE}{\mathcal{E}}
\newcommand{\wmcH}{\widetilde{\mathcal{H}}}
\newcommand{\TV}{\operatorname{TV}}
\newcommand{\dx}{\text{d}x}
\DeclareMathOperator*{\argmin}{arg\,min}
\DeclareMathOperator*{\Div}{div}
\newcommand{\tf}{TensorFlow}
\DeclareMathOperator*{\Glim}{\Gamma\mbox{-}\lim}
\newcommand{\FD}{\text{FD}}
\newcommand{\radius}{R}
\DeclareDocumentCommand{\InputData}{mO{}}{
  \def\DataPrefix{#2}
  \input{#1}}
\NewDocumentCommand{\nnewsout}{O{red}O{black}+m}
    {%
        \begingroup
        \setulcolor{#1}%
        \setul{-.5ex}{.4pt}%
        \def\SOUL@uleverysyllable{%
            \rlap{%
                \color{#2}\the\SOUL@syllable
                \SOUL@setkern\SOUL@charkern}%
            \SOUL@ulunderline{%
                \phantom{\the\SOUL@syllable}}%
        }%
        \ul{#3}%
        \endgroup
    }
\title{DeepTV: A neural network approach for total variation minimization}
\author{Andreas Langer\thanks{Corresponding author}\hspace{0.15cm}$^{, }$\thanks{Centre for Mathematical Sciences, Lund University, Box 118, 221 00 Lund, Sweden, \url{andreas.langer@math.lth.se}}\hspace{0.15cm} \and Sara Behnamian\thanks{Globe Institute, University of Copenhagen, Oester Voldgade 5-7, 1350 Copenhagen K, Denmark, \url{sara.behnamian@sund.ku.dk}}\hspace{0.15cm}$^{, }$\thanks{Department of Biology, Lund University, Box 118, 221 00 Lund, Sweden}}
\date{}
\begin{document}
\maketitle

\begin{abstract}
Neural network approaches have been demonstrated to work quite well to solve partial differential equations in practice. In this context approaches like physics-informed neural networks and the Deep Ritz method have become popular. In this paper, we propose a similar approach to solve an infinite-dimensional total variation minimization problem using neural networks. We illustrate that the resulting neural network problem does not have a solution in general. To circumvent this theoretic issue, we consider an auxiliary neural network problem, which indeed has a solution, and show that it converges in the sense of $\Gamma$-convergence to the original problem. For computing a numerical solution we further propose a discrete version of the auxiliary neural network problem and again show its $\Gamma$-convergence to the original infinite-dimensional problem. In particular, the $\Gamma$-convergence proof suggests a particular discretization of the total variation. Moreover, we connect the discrete neural network problem to a finite difference discretization of the infinite-dimensional total variation minimization problem. Numerical experiments are presented supporting our theoretical findings.
\end{abstract}
\vspace{0.5cm}
\noindent\textbf{Keywords:} Physics-Informed Neural Networks, Deep Ritz Method, Deep Neural Networks, Unsupervised Learning, Total Variation Minimization, $\Gamma$-Convergence, Image Reconstruction, Deep learning, DeepTV

\vspace{0.5cm}
\noindent \textbf{Mathematics Subject Classification:} 65K10, 46N10, 68U10

\section{Introduction}
Total variation (TV) minimization is a well-established technique in image processing, as it is known to preserve discontinuities in the solution. In this context, one typically minimizes a functional consisting of a data term, which enforces the consistency between the observation and the solution, and the total variation, as a regularization term. 
 In this paper we consider the so-called $L^1$-$L^2$-TV model first introduced in \cite{HintermullerLanger:13}, which consists of a combined $L^1$/$L^2$ data term. Let $\Omega\subset \R^d$, $d=1,2$, be an open, bounded and simple connected domain with Lipschitz boundary, then the $L^1$-$L^2$-TV model writes as
\begin{equation}\label{Eq:ProblemTV}
\inf_{u\in BV(\Omega)\cap L^2(\Omega)} \left\{ E(u):= \alpha_1 \|T u - g \|_{L^1(\Omega)} + \alpha_2 \|T u - g \|_{L^2(\Omega)}^2 + \lambda\TV(u)\right\},
\end{equation} 
where $T:L^2(\Omega) \to L^2(\Omega)$ is a bounded and linear operator, $\alpha_1,\alpha_2,\lambda \geq 0$ are parameters, $g\in L^2(\Omega)$ denotes the given data and $\TV(u)$ is the total variation of $u$ in $\Omega$.
We recall, that for $u\in L^1(\Omega)$
\begin{equation}\label{Eq:TV}
\TV(u) := \sup\left\{\int_\Omega u \Div q\ \dx \colon q\in C_c^1(\Omega)^d, |q(x)|_{2} \leq 1 \text{ for allmost all } x\in\Omega\right\},
\end{equation}
where $C_c^1(\Omega)$ denotes the space of continuously differentiable functions with compact support in $\Omega$ and $|\cdot|_2$ is the standard Euclidean vector norm ($\ell^2$-norm). We note that if $u\in W^{1,1}(\Omega)$, then $\TV(u) = \int_\Omega |\nabla u|_2 \dx$ \cite[Section 10.1]{AtBuMi:14}. Further we define $BV(\Omega):=\{u\in L^1(\Omega)\colon \TV(u) < \infty\}$ the space of functions with bounded (total) variation. The space $BV(\Omega)$ equipped with the norm $\|u\|_{BV(\Omega)} :=\|u\|_{L^1(\Omega)} + \TV(u)$ is a Banach space \cite{Giusti:84}.

It is easy to see that the $L^1$-$L^2$-TV model is a generalization or fusion of two other well-known models in image processing. More precisely, if we set $\alpha_1=0$ in \eqref{Eq:ProblemTV} then we obtain the $L^2$-TV model, which is well suited to the task of removing additive Gaussian noise in images, see e.g.\ \cite{ChCaCrNoPo:10}, while for $\alpha_2=0$ we obtain the $L^1$-TV model, which has been introduced in \cite{Alliney:97, Nikolova:02, Nikolova:04} for eliminating impulse noise, as e.g.\ salt-and-pepper noise. 
While these two special cases work well with either Gaussian noise or impulse noise only, it has been demonstrated in \cite{HintermullerLanger:13,Langer:17a,Langer:19} that the $L^1$-$L^2$-TV model is superior to these two models in removing simultaneously Gaussian and impulse noise. 
It is worth mentioning that several modifications of the $L^1$-$L^2$-TV model have been introduced in the literature, see e.g. \cite{GoShTo:14,HiLaAl:23,LiShYuWa:15}.

In order to numerically compute a solution of \eqref{Eq:ProblemTV} a discretisation method is needed. Typically total variation minimization is discretized by a finite difference method.  The reason for this might be that the image information is usually stored pixel-wise. 
Moreover, the functional in \eqref{Eq:ProblemTV} is non-smooth, which prevents the use of standard derivatives and complicates the process of finding its weak formulation, a requirement for applying the finite element method. 
 Nevertheless, attempts for using finite element methods for total variation minimization \cite{Bartels:12} and in particular for the $L^1$-$L^2$-TV model \cite{AlHiLa:24,AlkamperLanger:17,HiLaAl:23} have been made.  We refer the reader to \cite{ChambollePock:21} for an overview of different finite difference and finite element methods for total variation minimization. 

A recently becoming popular discretization technique is based on neural networks. Thereby the solution space is replaced by a space of neural networks rendering the problem finite dimensional. 
This approach is known under the name physics-informed neural networks (PINNs) \cite{LaLiFo:98,RaPeKa:19}, or in connection with optimization problems as the Deep Ritz Method \cite{EYu:18}. 
Both methods are originally introduced for solving partial differential equations, but their approach differs. 
In PINNs the resulting optimization problem is constituted from the residual of the partial differential equation together with its respective boundary and initial values, while the Deep Ritz Method considers the energy functional of a partial differential equation. 
For an overview about PINNs and neural network approaches in general we refer the reader to the review articles \cite{CuDiGiRoRaPi:22,ShZhKa:24} and the references therein. 
To the best of our knowledge, we are not aware of any work which considers problems of the type \eqref{Eq:ProblemTV}, i.e.\ total variation minimization over the space of bounded variation, in the context of neural network discretisation, as PINNs or the Deep Ritz Method. 
Nevertheless, it is worth mentioning, that first attempts to apply PINNs to solve the total variation flow, see e.g.\ \cite{GrDiKoSc:22}, have been presented. 
However, while minimizing the residual of the total variation flow is interesting in its own right, it leads to a different problem than solving \eqref{Eq:ProblemTV}.

In this paper we are interested in solving the $L^1$-$L^2$-TV model via a neural network approach. As with the Deep Ritz method, our approach is based on the following three basic steps:   
\begin{enumerate*}[(i)]
\item select a space of neural networks as the solution space;
\item choose a quadrature rule for approximating the functional;
\item optimize the resulting minimization problem via a suitable algorithm. 
\end{enumerate*}

We choose to replace the solution space $BV(\Omega)\cap L^2(\Omega)$ in \eqref{Eq:ProblemTV} by a set of neural networks $\mcH$, whose activation functions are  ReLUs (rectifier linear units) and $M\in\N$ denotes the number of weights and biases of a neural network in $\mcH$. In the sequel we will call such neural networks ReLU neural networks (ReLU-NNs). While any other type of neural networks might be used in this approach too, we note that ReLU-NNs are theoretically quite well studied, which is useful for our theoretical analysis. The resulting optimization problem reads then
\begin{equation}\label{Eq:ProblemTV:NN}
\inf_{u\in \mcH} \alpha_1 \|T u - g \|_{L^1(\Omega)} + \alpha_2 \|T u - g \|_{L^2(\Omega)}^2 + \lambda\TV(u).
\end{equation}
As the Deep Ritz Method and PINNs, this approach can be interpreted as unsupervised learning. 
However, in a more classical view \eqref{Eq:ProblemTV:NN} is simply an optimization problem with solution space $\mcH$. 
 Unfortunately, it turns out that even if \eqref{Eq:ProblemTV} has a solution in $BV(\Omega)\cap L^2(\Omega)$ this solution is in general not in $\mcH$, see \cref{Example:Counterexample} below. This has also been observed in \cite{AvelinJulin:20}. The reason for this negative result is that the space $BV(\Omega)\cap L^2(\Omega)$ contains functions with jumps and ReLU-NNs cannot represent such functions. 
 In fact ReLU-NNs can ``only'' represent continuous and piecewise affine functions \cite{ArBaMiMu:16}. 
 Despite this negative observation, we suggest an auxiliary neural network model, which is a constrained version of \eqref{Eq:ProblemTV:NN}. 
 That is we add a constraint to \eqref{Eq:ProblemTV:NN} such that a solution in $\mcH$ is indeed guaranteed. 
 We prove that this auxiliary model $\Gamma$-converges to the $L^1$-$L^2$-TV model, which fully justifies our approach. We are aware that $\Gamma$-convergence for a Deep Ritz Method is also studied in \cite{DoMuZe:22}, but in a completely different setting, i.e.\ for a different type of optimization problem, namely for variational problems arising from partial differential equations, and in Sobolev spaces and not in the space of bounded variation. Hence our analysis completely differs from the one in \cite{DoMuZe:22}. However, our $\Gamma$-convergence proof is inspired by the $\Gamma$-convergence analysis in \cite{Langer:11}, where the $\Gamma$-convergence of a discrete total variation to the continuous total variation \eqref{Eq:TV} is presented.

To numerically compute a solution of the auxiliary model, numerical quadrature to approximate the integrals are needed. Note that since $\mcH \subset W^{1,1}(\Omega)$, we have that $\TV(u) = \int_\Omega |\nabla u|_2 \dx$ for any $u\in \mcH$. Therefore, the total variation of $u\in\mcH$ is computed using the gradient of $u$, which must be carefully approximated in a numerical setting, i.e.\ when applying quadrature. It seems reasonable to approximate the gradient by a finite difference approximation. The reason for this is that replacing the integral by a numerical quadrature evaluates the gradient only at a finite number of points. However, this can be problematic for piecewise constant functions, because the gradient may be zero at the specific evaluation points chosen by the quadrature, even though the overall function might have significant variation. This leads to an inaccurate computation of the total variation.
A finite difference approximation of the gradient circumvents this shortcoming.  
We would like to emphasize that despite the described discretizations we still optimize the resulting discrete auxiliary model over the space $\mcH$. 
Further, we are able to show that this discrete auxiliary model again $\Gamma$-converges to \eqref{Eq:ProblemTV}. 
In particular, the $\Gamma$-convergence proof suggests a particular discretization of the gradient and hence total variation which ensures that the discrete solutions approximate the continuous solution as the discretization becomes finer. The same particular discretization of the total variation has been already used in \cite{Langer:11} to show $\Gamma$-convergence and later in \cite{LaiMessi:12}, but not in the context of neural networks. 
Our proof is actually based on the $\Gamma$-convergence proof in \cite{Langer:11} and adjusted to our neural network setting.
However, any convergence result of minimizers of the discrete auxiliary model to a solution of the continuous problem is missing in \cite{Langer:11}, while we present such a result in our setting. 
In \cite{LaiMessi:12} a convergence result of minimizers is presented, however with respect to the $L^2(\Omega)$ topology and under certain regularity assumption on the data $g$, which we do not need. Hence our result is more general and even stated with respect to the $L^1(\Omega)$ topology.

The discrete auxiliary model has interesting connections to existing models. In particular, it is connected to a respective finite difference discretization of \eqref{Eq:ProblemTV}. More precisely, if the respective finite difference discretization of \eqref{Eq:ProblemTV} has a solution then it is equivalent to the finite difference approximation of the solution of the discrete auxiliary model. 
Further, if $\alpha_1=0$ then the discrete auxiliary model looks like the total variation regularized deep image prior (DIP-TV) \cite{LiSuXuKa:19} and if additionally also $\lambda=0$ then it resembles the deep image prior (DIP) \cite{UlVeLe:18}. Note that in \cite{LiSuXuKa:19} an anisotropic discrete total variation is considered, while we consider an isotropic total variation, due to our definition of the continuous total variation. Additionally, we would like to remark that in image reconstruction one is usually interested in an isotropic approximation of the total variation \cite{Condat:17}, which is why we focus only on the isotropic approximation of the total variation. However, our $\Gamma$-convergence proofs can be easily extended to cover also the anisotropic discrete total variation, although then converging to a different continuous total variation, namely to \eqref{Eq:TV} with the $\ell^2$-norm $|\cdot|_2$ replaced by the $\ell^\infty$-norm $|\cdot|_\infty$.
Hence, our analysis connects DIP and DIP-TV, when used with ReLU-NNs, to the continuous $L^1$-$L^2$-TV model.

For minimizing the resulting discrete objective functional we utilize a stochastic gradient descent method, more precisely Adam \cite{KingmaBa:15}. This allows us to compute a numerical solution without considering the dual formulation of \eqref{Eq:ProblemTV:NN} and its discrete approximations. Our implementation of the solution process is based on \tf\ \cite{tensorflow2015}. 
However, this comes with a drawback, namely we observed that the square-root in the $\ell^2$-norm cannot be handled well. 
To overcome this shortcoming we approximate $|\cdot|_2$ and thus the discrete total variation. We suggest three different approximations, one of which is the commonly used Huber-regularization \cite{Huber:73}. For all three presented options we prove that the regularized discrete auxiliary model $\Gamma$-converges to \eqref{Eq:ProblemTV}. In particular, utilizing $\Gamma$-convergence we justify all our proposed models approximating the $L^1$-$L^2$-TV model. 

In order to compute the distance of a function, for example a solution of the auxiliary model, to a solution of \eqref{Eq:ProblemTV} an error estimate might be necessary. Note that error estimates for problems of the type \eqref{Eq:ProblemTV} have already been derived in \cite{AlHiLa:24,Bartels:15a,BartelsMilicevic:20,HintermullerRincon-Camacho:14}. However, all of these error estimates rely on a dual formulation of \eqref{Eq:ProblemTV} and a respective approximate solution of the dual problem. As stated above, our solution process does not need a dual formulation of the problem and hence in our setting we do not have an approximate solution of the dual problem of \eqref{Eq:ProblemTV}. Consequently, the error estimates in \cite{AlHiLa:24,Bartels:15a,BartelsMilicevic:20,HintermullerRincon-Camacho:14} are not directly applicable here. Therefore, we present a new error estimate for the $L^1$-$L^2$-TV model which is solely based on \eqref{Eq:ProblemTV}. 

The rest of the paper is organized as follows: In \cref{Sec:DefinitionNotation}, we review and introduce essential definitions related to neural networks and functional analysis, which are foundational for the subsequent sections. We analyze the neural network problem \eqref{Eq:ProblemTV:NN} in \cref{Sec:Problem} and in particular suggest an auxiliary model which indeed possesses a minimizer in $\mcH$. We show its $\Gamma$-convergence to the $L^1$-$L^2$-TV model and provide an error estimate. Moreover, we present a universal approximation result for $BV$-functions. In \cref{Sec:NumericalRealization} we describe the numerical realization of the proposed model. In particular, we present a numerical realization of the total variation. We connect the fully discretized neural network model to the original model \eqref{Eq:ProblemTV} by utilizing $\Gamma$-convergence. Further we discuss a numerical issue and propose a solution to circumvent it. In \cref{Sec:NumericalExperiments} we present several numerical experiments supporting our theoretical findings. We conclude with a short discussion in \cref{Sec:Conclusion}.

\section{Definitions and notations}\label{Sec:DefinitionNotation}

We recall that the function $\sigma:\R\to\R$ defined by $\sigma(x)=\max\{0,x\}$ is called rectifier linear unit (ReLU) and commonly used in neural network based machine learning.

\begin{definition}[ReLU-NN, depth]
A ReLU-NN is a dense feedforward neural network whose activation functions live only on the neurons of the hidden layers and are ReLUs. Thereby the depth of the ReLU-NN is the number of hidden layers plus one. That is a ReLU-NN with $L\in\N$ hidden layers has depth $L+1$.
\end{definition}

As we are interested to represent functions $u:\Omega \to \R$ with a ReLU-NN, we consider only ReLU-NNs whose input layer consists of $d\in\{1,2\}$ neurons and whose output layer consists of 1 neuron. Thereby the number of neurons in the hidden layers might be arbitrary but fixed such that the number of weights and biases is $M\in\N$. We denote the set of ReLU-NNs with this fixed architecture by $\mcH$ to emphasize the number $M$ of weights and biases, which is in particularly descriptive in a convergence process where the number $M$ might change. 
The collection of all weights and biases of a neural network in $\mcH$ is denoted by $\theta\in \R^M$. 
For a neural network $u_\theta \in \mcH$ we use the index $\theta$ to emphasize that the neural network  depends on the weights and biases $\theta$.

\begin{definition}[continuous piecewise affine]\label{Def:CPWA}
A function $f:\Omega \to \R$ is called \emph{continuous piecewise affine} if there exists a finite partition of polyhedra that cover $\Omega$ and $f$ is affine on each polyhedron and continuous in $\Omega$.
\end{definition}
Note that the continuity requirement in \cref{Def:CPWA} automatically holds and could be dropped in the definition, since affine functions are continuous, and all polyhedra are closed and they cover $\Omega$. We anyway kept it to make the continuity explicitly clear. From \cref{Def:CPWA} it immediately follows that any arbitrary continuous piecewise affine function is in the Sobolev space $W^{1,\infty}(\Omega)$ and hence also in $W^{1,p}(\Omega)$ for $p\ge 1$.

For a Banach space $V$ we denote its norm by $\|\cdot\|_V$ and its continuous dual space by $V^*$. For the duality pairing between $V^*$ and $V$ we use $\langle\cdot,\cdot\rangle_{V^*,V}$. If $V=L^2(\Omega)$, then we denote by $\langle\cdot,\cdot\rangle$ the standard $L^2(\Omega)$ inner product. For an operator $T$, we represent its adjoint by $T^*$. 
By $|\cdot|_r$, $r\in [1,\infty]$, we denote the vector $\ell^r$-norm and $|\cdot|$ is the absolute value of a value in $\R$. Hence, for $v\in L^p(\Omega)^d$, we define $\|v\|_{L^p(\Omega)} := \left( \int_\Omega |v|_2^p\dx\right)^{1/p}$, $p\in\N$. 
Given that our analysis is situated within the framework of $BV(\Omega)$-functions, it is useful for us to introduce an intermediate convergence in $BV(\Omega)$. It refers to a type of convergence that falls between strong and weak convergence.
\begin{definition}[{intermediate convergence, cf. \cite[Definition 10.1.3]{AtBuMi:14}}]
Given a sequence $(u_n)_n \subset BV(\Omega)$. We say that $u_n$ converges to $u\in BV(\Omega)$ in the sense of \emph{intermediate convergence}, if $u_n \to u$ strongly in $L^1(\Omega)$, i.e.\ $\lim_{n\to\infty}\|u_n - u\|_{L^1(\Omega)} = 0$, and $\TV(u_n) \to \TV(u)$, i.e.\ $\lim_{n\to\infty} \TV(u_n) = \TV(u)$.
\end{definition}
We recall some further frequently used definitions from functional analysis.
\begin{definition}[coercive, lower semicontinuous, subdifferential]
Let $V$ be a Banach space.
\begin{enumerate}[(a)]
\item A functional $F:V \to \overline{\R}:=\R \cup \{+\infty\}$ is \emph{coercive} on $V$, if for $(v_n)_n\subset V$ with $\|v_n\|_V \to \infty$ implies $F(v_n) \to \infty$.
\item A functional $F: V \to \overline{\R}$ is called \emph{(weakly) lower semicontinuous} if for all $u \in V$ we have that $\liminf_{n \to \infty} F(v_n) \ge F(u)$ for any sequence $(v_n)_n \subset V$ (weakly) converging to $u$ as $n \to \infty$.
\item For a convex functional $F : V \to \overline{\R}$, we define the \emph{subdifferential} of $F$ at $v\in V$, as the set valued function $\partial F(v) = \emptyset$ if $F(v)=\infty$, and otherwise as
$$
\partial F(v) = \{v^* \in V^* \ : \ \langle v^*, u-v\rangle_{V^*,V} + F(v) \leq F(u) \ \ \forall u\in V \}.
$$
\end{enumerate}
\end{definition}

\section{Problem description}\label{Sec:Problem}

We are interested in solving the $L^1$-$L^2$-TV model numerically by a neural network approach. More precisely, we propose to minimize the functional in \eqref{Eq:ProblemTV} over the set of ReLU-NNs which leads to the following optimization problem

\begin{equation}\label{Eq:ProblemHw}
\min_{u_\theta \in \mcH} \alpha_1 \|T u_\theta - g \|_{L^1(\Omega)} + \alpha_2 \|T u_\theta - g \|_{L^2(\Omega)}^2 + \lambda \TV(u_\theta).
\end{equation}
Solving \eqref{Eq:ProblemHw} amounts to find $\theta \in \R^M$, where $M\in \N$ is the number of weights and biases, such that $u_\theta$ minimizes \eqref{Eq:ProblemHw}. That is we could rewrite \eqref{Eq:ProblemHw} as
\begin{equation}\label{Eq:ProblemHw:Theta}
\min_{\theta \in \R^M} \alpha_1 \|T u_\theta - g \|_{L^1(\Omega)} + \alpha_2 \|T u_\theta - g \|_{L^2(\Omega)}^2 + \lambda \TV(u_\theta).
\end{equation}

It is well-known that any $L^1$-function can be arbitrary exactly approximated by neural networks with non-polynomial activation functions \cite{LeLiPiSch:93}. 
In the case of ReLU-NNs, we even have the following depth limited universal approximation result.

\begin{theorem}[{\cite[Theorem 2.3]{ArBaMiMu:16}}]\label{Cor:UniversalApprox}
Let $u \in L^1(\Omega)$. Then for any arbitrary $\varepsilon >0$ there is an $u_\varepsilon$ which can be represented by a ReLU-NN with depth
 at most $\lceil \log_2(d+1) \rceil+1$ such that $\|u - u_\varepsilon\|_{L^1(\Omega)} < \varepsilon$.
\end{theorem}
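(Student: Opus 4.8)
The plan is to split the statement into an \emph{approximation} step and an \emph{exact representation} step, exploiting the fact that a ReLU-NN realizes precisely the continuous piecewise affine functions of \cref{Def:CPWA}. Since the accuracy $\varepsilon$ enters only through the first step, it suffices to (i) produce a continuous piecewise affine $u_\varepsilon$ with $\|u - u_\varepsilon\|_{L^1(\Omega)} < \varepsilon$, and then (ii) show that \emph{every} continuous piecewise affine function on $\Omega$ is computed exactly by a ReLU-NN whose depth is at most $\lceil \log_2(d+1)\rceil + 1$. Combining (i) and (ii) — the latter introducing no further error — yields the claim.

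For step (i) I would invoke the density of continuous piecewise affine functions in $L^1(\Omega)$. Concretely, continuous functions with compact support are dense in $L^1(\Omega)$, so I pick $v \in C_c(\Omega)$ with $\|u - v\|_{L^1(\Omega)} < \varepsilon/2$. As $v$ is uniformly continuous, its piecewise linear interpolant on a sufficiently fine simplicial mesh (e.g.\ a Kuhn triangulation) of a neighborhood of $\operatorname{supp} v$ converges to $v$ uniformly, hence in $L^1(\Omega)$ since $\Omega$ is bounded; this produces a continuous piecewise affine $u_\varepsilon$ with $\|v - u_\varepsilon\|_{L^1(\Omega)} < \varepsilon/2$. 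The triangle inequality gives $\|u - u_\varepsilon\|_{L^1(\Omega)} < \varepsilon$. This step is routine.

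The substance lies in step (ii). The first ingredient is a lattice representation of piecewise affine functions: every continuous piecewise affine $f:\R^d\to\R$ can be written as a signed sum of maxima of affine functions,
\[
f = \sum_{j=1}^{p} s_j \, \max_{i\in S_j} \ell_i, \qquad s_j \in \{-1,+1\}, \quad |S_j| \le d+1,
\]
with $\ell_i$ affine and the \emph{dimension-dependent} cardinality bound $|S_j|\le d+1$ (Tarela–Martínez / Wang–Sun). The second ingredient is the ReLU realization of the maximum: the binary maximum is $\max(a,b) = b + \sigma(a-b)$, which — carrying $b = \sigma(b)-\sigma(-b)$ forward — costs one hidden layer, so the maximum of $m$ affine functions is computed by a balanced binary tree of such gates using $\lceil \log_2 m\rceil$ hidden layers. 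Applying this with $m = |S_j| \le d+1$, each inner term $\max_{i\in S_j}\ell_i$ uses at most $\lceil\log_2(d+1)\rceil$ hidden layers; the $p$ subnetworks run in parallel without increasing the number of layers, and the outer signed sum $\sum_j s_j(\cdot)$ is folded into the linear output neuron. By the paper's convention (depth $=$ hidden layers $+\,1$) the total depth is $\lceil\log_2(d+1)\rceil + 1$.

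The main obstacle is establishing the max-representation with the sharp bound $|S_j|\le d+1$: a naive lattice representation controls the index sets only by the number of affine pieces of $f$, which would make the depth of the max-tree grow with the complexity of the partition rather than with the ambient dimension. Bounding these cardinalities by $d+1$ is the delicate geometric point behind the depth estimate, and it is exactly what renders the depth independent of the number of pieces of $u_\varepsilon$. By contrast, the density argument in step (i) and the ReLU gadgets for $\max$ are standard.
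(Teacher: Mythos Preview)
The paper does not supply its own proof of this statement: it is quoted verbatim from \cite[Theorem~2.3]{ArBaMiMu:16} and used as a black box. Your two-step outline --- $L^1$-density of continuous piecewise affine functions, followed by the exact ReLU realization via the Tarela--Mart\'inez/Wang--Sun lattice representation $f=\sum_j s_j \max_{i\in S_j}\ell_i$ with the sharp bound $|S_j|\le d+1$ and a $\lceil\log_2(d+1)\rceil$-layer binary max-tree --- is correct and is precisely the argument of \cite{ArBaMiMu:16}; you have also correctly isolated the cardinality bound $|S_j|\le d+1$ as the nontrivial ingredient that makes the depth depend on $d$ rather than on the number of pieces.
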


Note that $BV(\Omega) \subset L^1(\Omega)$ and hence \cref{Cor:UniversalApprox} includes any $BV$-function. A similar result holds also for $BV$-functions with respect to the intermediate convergence.

\begin{theorem}\label{Cor:UniversalApprox:BV}
Let $u \in BV(\Omega)$. Then for any arbitrary $\varepsilon >0$ there is an $u_\varepsilon$ which can be represented by a ReLU-NN with depth at most $\lceil \log_2(d+1) \rceil+1$ such that $\|u - u_\varepsilon\|_{L^1}  + |\TV(u) - \TV(u_\epsilon)|< \varepsilon$.
\end{theorem}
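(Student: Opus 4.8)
The plan is to upgrade the $L^1$-approximation already furnished by \cref{Cor:UniversalApprox} to the stronger intermediate convergence by routing the construction through an intermediate \emph{smooth} approximant and then a continuous piecewise affine approximant whose gradient converges. The reason a direct appeal to \cref{Cor:UniversalApprox} cannot suffice is instructive and identifies the main obstacle: $L^1$-closeness controls only $\|u-u_\varepsilon\|_{L^1}$ and yields no information on $\TV(u_\varepsilon)$. Since the total variation is merely lower semicontinuous with respect to $L^1$-convergence, an $L^1$-approximation could at best guarantee $\TV(u)\le\liminf \TV(u_\varepsilon)$, never the two-sided bound on $|\TV(u)-\TV(u_\varepsilon)|$ that the statement requires. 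Hence the entire difficulty lies in producing an approximant whose gradient mass is close to $\TV(u)$.

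First I would fix $\varepsilon>0$ and invoke the classical strict (intermediate) density of smooth functions in $BV(\Omega)$ for Lipschitz domains (obtained by mollification together with a partition of unity near the boundary, see \cite{AtBuMi:14}) to select $w\in C^\infty(\Omega)\cap W^{1,1}(\Omega)$ with $\|u-w\|_{L^1(\Omega)}+|\TV(u)-\TV(w)|<\varepsilon/2$; here $w\in W^{1,1}(\Omega)$ because its approximating sequence has uniformly bounded gradient mass converging to $\TV(u)<\infty$, and for such $w$ one has $\TV(w)=\int_\Omega|\nabla w|_2\,\dx$. Second, I would approximate this fixed smooth $w$ by a continuous piecewise affine function $v$ in the $W^{1,1}(\Omega)$-norm: continuous piecewise affine functions (for instance nodal interpolants on a sufficiently fine triangulation of $\Omega$) are dense in $W^{1,1}(\Omega)$, so I can choose $v$ with $\|w-v\|_{W^{1,1}(\Omega)}<\varepsilon/4$. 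By the remark following \cref{Def:CPWA} we have $v\in W^{1,\infty}(\Omega)$ and $\TV(v)=\int_\Omega|\nabla v|_2\,\dx$, so the pointwise reverse triangle inequality $\bigl||\nabla w|_2-|\nabla v|_2\bigr|\le|\nabla w-\nabla v|_2$ gives $|\TV(w)-\TV(v)|\le\|\nabla w-\nabla v\|_{L^1(\Omega)}\le\|w-v\|_{W^{1,1}(\Omega)}$, whence $\|w-v\|_{L^1(\Omega)}+|\TV(w)-\TV(v)|<\varepsilon/2$. It is precisely this step that controls the gradient and therefore the total variation.

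Finally I would set $u_\varepsilon=v$ and combine the two estimates by the triangle inequality to obtain $\|u-u_\varepsilon\|_{L^1(\Omega)}+|\TV(u)-\TV(u_\varepsilon)|<\varepsilon$. Since $v$ is continuous piecewise affine, the exact representation of such functions by ReLU-NNs of depth at most $\lceil\log_2(d+1)\rceil+1$ from \cite{ArBaMiMu:16} (the same result underpinning \cref{Cor:UniversalApprox}) shows $u_\varepsilon$ lies in the required network class, the depth bound being independent of the number of affine pieces. Beyond the gradient control in the second step and the strict density in the first, the one genuinely technical point I expect to need care is extending the piecewise affine $v$, defined a priori only on the bounded $\Omega$, to a globally continuous piecewise affine function on $\R^d$ so that the representation theorem of \cite{ArBaMiMu:16} applies verbatim; this is harmless since only the values on $\Omega$ enter the estimates and the polyhedral partition of $\Omega$ can be prolonged to cover $\R^d$.
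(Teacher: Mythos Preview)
Your proposal is correct and follows essentially the same route as the paper: strict (intermediate) density of smooth functions in $BV(\Omega)$, followed by $W^{1,1}$-approximation of the smooth intermediary by a continuous piecewise affine nodal interpolant, and finally the representation result of \cite{ArBaMiMu:16}. The paper uses the same two-step construction with the same interpolation-error control on the gradient; your added remark on extending the piecewise affine function to $\R^d$ is a reasonable technical caveat that the paper leaves implicit.
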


\begin{proof}
For $u \in BV(\Omega)$ there exists a sequence $(v_n)_n \subset C^\infty(\Omega)$ such that $\lim_{n\to \infty} \|u - v_n\|_{L^1(\Omega)} =0$ and $\lim_{n\to\infty} \int_\Omega |\nabla v_n|_2 \dx = \TV(u)$ \cite[Theorem 3.9]{AmFuPa:00}. Hence for any $\epsilon>0$ there is an $v_\epsilon\in C^\infty(\Omega)$ such that 
\begin{equation}\label{Proof:Cor:UniversalApprox:BV:Eq1}
\|u - v_\epsilon\|_{L^1(\Omega)} + \left| \TV(u) - \TV(v_\epsilon) \right| < \epsilon/2.
\end{equation}
Further, $v_\epsilon$ can be approximated by a continuous piecewise affine function $u_\epsilon$ on a regular mesh of simplices (intervals for $d=1$, triangles for $d=2$) such that $u_\epsilon$ coincides with $v_\epsilon$ in every vertex of the mesh and using linear interpolation within each simplex.  For sufficiently small mesh size we have 
$$
\|v_\epsilon -  u_\epsilon\|_{L^{1}(\Omega)} + \| \nabla v_\epsilon -  \nabla u_\epsilon\|_{L^{1}(\Omega)} \leq \epsilon/2,
$$
see e.g.\ \cite[Corollary 1.109]{ErnGuermond:04}. Hence, by using the triangle inequality and noting that $\TV(v) = \|\nabla v\|_{L^1(\Omega)}$ for all $v\in W^{1,1}(\Omega)\cup C^\infty(\Omega)$ this yields
\begin{equation}\label{Proof:Cor:UniversalApprox:BV:Eq2}
\|v_\epsilon -  u_\epsilon\|_{L^{1}(\Omega)}  + \left| \TV(v_\epsilon) - \TV(u_\epsilon) \right| < \epsilon/2.
\end{equation}
Combining \eqref{Proof:Cor:UniversalApprox:BV:Eq1} and \eqref{Proof:Cor:UniversalApprox:BV:Eq2} and using again the triangle inequality yields 
\begin{equation*}
\|u -  u_\epsilon\|_{L^{1}(\Omega)}  + \left| \TV(u) - \TV(u_\epsilon) \right| < \epsilon.
\end{equation*}
That is $BV(\Omega)$ is dense in the space of continuous piecewise affine functions with respect to the intermediate convergence. Since any continuous piecewise affine function from $\Omega$ to $\R$ can be represented by a ReLU-NN with depth of at most $\lceil \log_2(d+1) \rceil+1$ \cite[Theorem 2.1]{ArBaMiMu:16}, the assertion follows.
\end{proof}

\subsection{Existence of a minimizer}
Unfortunately we cannot expect that \eqref{Eq:ProblemHw}, respectively \eqref{Eq:ProblemHw:Theta}, admits a minimizer as a simple one-dimensional example borrowed from \cite{AvelinJulin:20} indicates:
\begin{example}[{cf.\ \cite[Example 1.1]{AvelinJulin:20}}]\label{Example:Counterexample}
	Let $\mcH$ be the set of one hidden layer neural networks, where each one hidden layer neural network is defined as 
	$$
	u_{w,a,b}: \R \to \R \qquad u_{w,a,b}(x) = \sum_{i=1}^m a_i \sigma(w_i \cdot x + b_i).
	$$
Here $m\in\N$ is the number of neurons in the hidden layer, $w\in\R^m$ is the vector of weights from the input neuron to the hidden layer, $a\in\R^m$ is the vector of weights from the hidden layer to the output neuron, $b\in\R^m$ is a vector of biases, and $\sigma:\R \to \R$ is the ReLU activation function, i.e.\ $\sigma(x) = \max\{0,x\}$. 
Hence, we have that $M=3m$.
If we try to approximate the BV-function 
	\begin{equation}\label{Eq:Example:g}
	g(x)= \begin{cases}
	1 & \text{if } x\in [0,l_u)\\
	0 & \text{if } x\in (-l_\ell,0)
	\end{cases}
	\end{equation}
on $\Omega=(-l_\ell, l_u)\subset \R$ by solving $\min_{u_{w,a,b}\in \mcH} \|u_{w,a,b} - g\|_{L^2(\Omega)}^2$, i.e.\ we set $T=I$, $\alpha_1=0=\lambda$ and $\alpha_2=1$ in \eqref{Eq:ProblemHw}, then it turns out that this problem does not have a solution. In fact, any function in $\mcH$ is continuous and since $g$ is not continuous we obtain that $\|u_{w,a,b} - g\|_{L^2(\Omega)}^2 > 0$ for all $u_{w,a,b}\in \mcH$. To see that this is not a contradiction to the universal approximation theorem, see \cref{Cor:UniversalApprox,Cor:UniversalApprox:BV}, i.e.\ that any $BV$-function can be approximated by a ReLU-NN arbitrarily good, let us set $m=2$, $a_1=1$ and $a_2=-1$. Further we choose $w_1=\frac{1}{h}=w_2$, $h>0$, and $b_1=1$, $b_2=0$. This yields
	$$
	u_{w,a,b}(x) = \sigma(\frac{1}{h}x + 1) - \sigma(\frac{1}{h}x) = \frac{\sigma(x + h) - \sigma(x)}{h} \xrightarrow{h \to 0} g(x),\qquad x\in(-l_\ell,l_u),
	$$
	and hence by the Dominated Convergence Theorem we get that 
	$$
	\inf_{u_{w,a,b}\in \mcH} \|u_{w,a,b} - g\|_{L^2(\Omega)}^2 = 0.
	$$
	\end{example}
Even if $\lambda>0$ the issue described in \cref{Example:Counterexample} preserves. Note that a solution of \eqref{Eq:ProblemTV} with $T=I$, $\alpha_1,\alpha_2>0$, $\lambda=1$ and $g$ as in \eqref{Eq:Example:g} is given as
\begin{align*}
u(x) = \begin{cases}
c_1 &\text{if } x\in [0,l_u),\\
c_2 &\text{if } x\in (-l_\ell,0),
\end{cases}
\end{align*}
where $c_1,c_2\in\R^+_0$ such that $c_1 \leq c_2$ and $c_1,c_2$ depend on $\alpha_1$ and $\alpha_2$. The analytic computation of the exact solution for this example can be founded in \cref{AppendixA}. Hence, if $\alpha_1$ and $\alpha_2$ are chosen such that $c_1 < c_2$, then for any $M\in\N$ a function in $\mcH$ cannot solve this problem, but only approximates a solution arbitrary close for sufficiently large $M$. From \cref{Example:Counterexample} we learn that the failure of the existence of a solution of \eqref{Eq:ProblemHw} might be due to the unboundedness of the weights $\theta$, rendering the solution space not compact. Thus we slightly modify \eqref{Eq:ProblemHw} to
\begin{equation}\label{Eq:ProblemHw:constrained}
\min_{u_\theta \in \mcHc{c}{M}} 
\alpha_1 \|T u_\theta - g \|_{L^1(\Omega)} + \alpha_2 \|T u_\theta - g \|_{L^2(\Omega)}^2 + \lambda \TV(u_\theta)
\end{equation}
where $\mcHc{c}{M}:=\{u_\theta \in \mcH \colon |\theta|_\infty \leq c\}$ 
with $c\ge 0$ typically to be chosen large such that \eqref{Eq:ProblemHw:constrained} is a close approximation of \eqref{Eq:ProblemHw}. Note that in $\R^M$ all norms are equivalent and hence any arbitrary vector-norm could be chosen instead of $|\cdot|_\infty$. 

\begin{theorem}\label{Prop:ExistenceMinimizer}
The optimization problem in \eqref{Eq:ProblemHw:constrained} possesses a solution $u_\theta \in \mcH$.
\end{theorem}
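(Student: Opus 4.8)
The plan is to prove existence by the direct method of the calculus of variations, exploiting that the constraint $|\theta|_\infty \le c$ renders the admissible parameter set compact. Writing $J(\theta) := \alpha_1\|Tu_\theta - g\|_{L^1(\Omega)} + \alpha_2\|Tu_\theta - g\|_{L^2(\Omega)}^2 + \lambda\TV(u_\theta)$, I would identify \eqref{Eq:ProblemHw:constrained} with the finite-dimensional problem of minimizing $J$ over $\Theta_c := \{\theta\in\R^M : |\theta|_\infty \le c\}$, which is closed and bounded, hence compact in $\R^M$. By the Weierstrass theorem it then suffices to show that $J$ is lower semicontinuous on $\Theta_c$; combined with compactness this yields a minimizer $\bar\theta\in\Theta_c$, and thus a solution $u_{\bar\theta}\in\mcHc{c}{M}\subset\mcH$ of \eqref{Eq:ProblemHw:constrained}.

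The first ingredient is the continuity of the parametrization. Since a ReLU-NN is a finite composition of affine maps, whose coefficients are coordinates of $\theta$, with the continuous activation $\sigma$, the map $(\theta,x)\mapsto u_\theta(x)$ is jointly continuous on $\Theta_c\times\overline{\Omega}$. As this set is compact, $u_\theta$ is uniformly bounded there, say $|u_\theta(x)|\le C$ for all $\theta\in\Theta_c$ and $x\in\Omega$. Hence, if $\theta_n\to\bar\theta$ in $\Theta_c$, then $u_{\theta_n}(x)\to u_{\bar\theta}(x)$ pointwise and dominated by the constant $C$, which is integrable over the bounded set $\Omega$, so dominated convergence gives $u_{\theta_n}\to u_{\bar\theta}$ in both $L^1(\Omega)$ and $L^2(\Omega)$. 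Since $T$ is bounded and linear, $Tu_{\theta_n}\to Tu_{\bar\theta}$ in $L^2(\Omega)$, and consequently the two data terms $\theta\mapsto\alpha_1\|Tu_\theta - g\|_{L^1(\Omega)}$ and $\theta\mapsto\alpha_2\|Tu_\theta - g\|_{L^2(\Omega)}^2$ are continuous on $\Theta_c$.

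The second ingredient concerns the regularizer. Here one cannot expect $\TV(u_{\theta_n})\to\TV(u_{\bar\theta})$, because the gradient of a piecewise affine ReLU-NN is piecewise constant and its jump set moves discontinuously with $\theta$; only lower semicontinuity is available, which is however exactly what the direct method requires. Indeed, by the dual definition \eqref{Eq:TV}, $\TV$ is a supremum over $q\in C_c^1(\Omega)^d$ with $|q|_2\le 1$ of the functionals $u\mapsto\int_\Omega u\,\Div q\,\dx$, each of which is continuous with respect to $L^1(\Omega)$ convergence since $\Div q$ is bounded. A supremum of $L^1$-continuous functionals is $L^1$-lower semicontinuous, so $\TV(u_{\bar\theta})\le\liminf_{n\to\infty}\TV(u_{\theta_n})$.

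Combining the two ingredients, for any sequence $\theta_n\to\bar\theta$ in $\Theta_c$ the data terms converge while the TV term is lower semicontinuous, whence $J(\bar\theta)\le\liminf_{n\to\infty}J(\theta_n)$; that is, $J$ is lower semicontinuous on the compact set $\Theta_c$. Taking $(\theta_n)_n$ to be a minimizing sequence for $J$ and extracting, by compactness of $\Theta_c$, a subsequence converging to some $\bar\theta\in\Theta_c$, lower semicontinuity gives $J(\bar\theta)\le\liminf_{n\to\infty} J(\theta_n)=\inf_{\Theta_c}J$, so $\bar\theta$ is a minimizer. The main obstacle is the TV term: one must resist trying to pass to the limit in $\int_\Omega|\nabla u_\theta|_2\,\dx$, which would fail, and instead use the $L^1$-lower semicontinuity coming from the dual formulation, which meshes precisely with the $L^1$-convergence delivered by the continuity of the parametrization.
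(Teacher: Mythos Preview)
Your proof is correct and follows essentially the same strategy as the paper's: identify the problem with minimizing $J$ over the compact set $\Theta_c\subset\R^M$, establish continuity of the data terms via continuity of $\theta\mapsto u_\theta$, invoke lower semicontinuity of $\TV$, and conclude by the Weierstrass/Extreme Value Theorem. If anything, you are more explicit than the paper, spelling out the dominated-convergence argument for $L^1$/$L^2$ convergence of $u_{\theta_n}$ and deriving the $L^1$-lower semicontinuity of $\TV$ directly from the dual definition rather than citing it.
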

\begin{proof}
The optimization problem \eqref{Eq:ProblemHw:constrained} can be equivalently written as
\begin{equation}\label{Eq:ProblemHw:Theta2}
\min_{\theta \in \Theta} \alpha_1 \|T u_\theta - g \|_{L^1(\Omega)} + \alpha_2 \|T u_\theta - g \|_{L^2(\Omega)}^2 + \lambda \TV(u_\theta).
\end{equation}
where $\Theta:=\{\theta \in \R^M \colon |\theta|_\infty\leq c\}$. Since $\Theta$ is closed and bounded, it is compact (Heine-Borel Theorem). 
Let us denote the functional in \eqref{Eq:ProblemHw:Theta2} by $J(\theta) := J_c(\theta) + J_d(\theta)$, where $J_c(\theta):= \alpha_1 \|T u_\theta - g \|_{L^1(\Omega)} + \alpha_2 \|T u_\theta - g \|_{L^2(\Omega)}^2 $ and $J_d(\theta) := \lambda\TV(u_\theta)$.  We show now that $J$ is lower semi-continuous. Since the ReLU activation function $\sigma$ is continuous, functions in $\mcH$ are continuous with respect to $\theta$. Hence, for a sequence $(\theta_n)_n\subset \R^M$ with limit $\theta\in\R^M$ for $n\to \infty$ we have $u_{\theta_n} \to u_\theta$ as $\theta_n \to \theta$. Since strong convergence implies weak convergence we also have $u_{\theta_n} \rightharpoonup u_\theta$ as $\theta_n \to \theta$. By the weak lower semi-continuity of the total variation we obtain $\TV(u_\theta) \leq \liminf_{\theta_n \to \theta} \TV(u_{\theta_n})$ and hence $J_d$ is lower semi-continuous. Further, since $J_c$ is continuous, we have that $J$ is lower semi-continuous.

Since $\Theta$ is compact and $J$ is lower semi-continuous, the existence of a minimizer follows by the Extreme Value Theorem.
\end{proof} 
We note that \cref{Prop:ExistenceMinimizer} can be extended to any activation function $\sigma\in C(\R)$, as the proof only requires that the activation functions are continuous. That is if in \eqref{Eq:ProblemHw:constrained} the solution space $\mcH$ is replaced by a set of neural networks with arbitrary continuous activation functions, then \eqref{Eq:ProblemHw:constrained} still possesses a solution in this respective space.

\begin{remark}
The constraint $|\theta|_\infty \leq c$ in \eqref{Eq:ProblemHw:constrained} guarantees that the weights and biases $\theta$ stay bounded from above and below. The same effect can also be achieved by adding the term $\alpha_\theta |\theta|_\infty$ to \eqref{Eq:ProblemHw} yielding
\begin{equation}\label{Eq:ProblemHw:penalized}
\min_{u_\theta \in \mcH} \alpha_1 \|T u_\theta - g \|_{L^1(\Omega)} + \alpha_2 \|T u_\theta - g \|_{L^2(\Omega)}^2 + \lambda \TV(u_\theta) + \alpha_\theta |\theta|_\infty,
\end{equation}
where $\alpha_\theta > 0$ should be typically chosen small such that \eqref{Eq:ProblemHw:penalized} is a close approximation of \eqref{Eq:ProblemHw}.
Then similar as in \cref{Prop:ExistenceMinimizer} we can show the existence of a minimizer for \eqref{Eq:ProblemHw:penalized}.
\end{remark}

We recall that \eqref{Eq:ProblemTV} has a solution in $BV(\Omega)\cap L^2(\Omega)$ if $T$ does not annihilate constant functions, $\alpha_1 + \alpha_2 > 0$ and $\lambda > 0$ \cite[Lemma 3.3]{Langer:17a}. The minimizer is even unique if additionally $\alpha_2 > 0$ and $T$ is injective; see \cite[Theorem 3.3]{Langer:17a}. The assumption that $T$ does not annihilate constant functions is necessary to show that \eqref{Eq:ProblemTV} is coercive in $BV(\Omega)$. 
In \cref{Prop:ExistenceMinimizer} the coercivity condition follows directly from the constraint $|\theta|_\infty\leq c$.
In fact, this implies that the existence of a solution of \eqref{Eq:ProblemHw:constrained} 
is guaranteed for any $\alpha_1,\alpha_2,\lambda\ge 0$ and hence also if $\lambda=0$. 

Note that \cref{Prop:ExistenceMinimizer} guarantees ``only'' the existence of a minimizer, but does not say anything about the uniqueness. In fact, it seems elusive to show uniqueness of a solution of \eqref{Eq:ProblemHw:constrained}, since $u_\theta$ is usually non-convex with respect to $\theta$ rendering \eqref{Eq:ProblemHw:constrained} non-convex. On the contrary \eqref{Eq:ProblemTV} is convex with respect to $u$ and even strictly convex if $\alpha_2 > 0$ and $T$ is injective. This means that changing the solution space in \eqref{Eq:ProblemTV} to $\mcH$ or $\mcHc{c}{M}$ also changes the nature of the problem. Still \eqref{Eq:ProblemTV} and \eqref{Eq:ProblemHw:constrained} are closely related as we will show in the next subsection.

\subsection{$\Gamma$-convergence}
We will now analyze how minimizers of \eqref{Eq:ProblemHw:constrained} behave for $c\to \infty$ by utilizing $\Gamma$-convergence. For this purpose let us recall the definition of $\Gamma$-convergence.
\begin{definition}[$\Gamma$-convergence {\cite[Definition 1.5]{Braides:02}}]\label{Def:GammaConvergence}
Let $V$ be a Banach space and $\mcE,\mcE_n: V \to \overline{\R}$. The sequence of functionals $(\mcE_n)_n$ is said to $\Gamma$-converge to $\mcE$ if for all $u\in V$ we have
\begin{enumerate}[(i)]
\item\label{Def:GammaConvergence:i} ($\liminf$-condition) $\mcE(u) \leq \liminf_{n\to \infty} \mcE_n(u_n)$ for every $(u_n)_n \subset V$ converging to $u$;
\item\label{Def:GammaConvergence:ii} ($\limsup$-condition) $\mcE(u) \geq \limsup_{n\to \infty} \mcE_n(u_n)$ for at least one sequence $(u_n)_n \subset V$ converging to $u$.
\end{enumerate}
\end{definition}
For further reading on the concept of $\Gamma$-convergence we refer to \cite{Braides:02, DalMaso:12}.

\begin{theorem}[$\Gamma$-convergence]\label{Thm:GammaConvergence}
We define $\mcE_n, \mcE: L^1(\Omega) \to \overline{\R}$ as
\begin{equation}\label{Eq:En}
\mcE_n(u):= \begin{cases}
\alpha_1 \|T u - g \|_{L^1(\Omega)} + \alpha_2 \|T u - g \|_{L^2(\Omega)}^2 + \lambda \TV(u) & \text{if }\ u \in \mcHc{c_n}{M_n},\\
+\infty & \text{otherwise},
\end{cases}
\end{equation}
and 
\begin{equation}\label{Eq:E}
\mcE(u):= \begin{cases}
\alpha_1 \|T u - g \|_{L^1(\Omega)} + \alpha_2 \|T u - g \|_{L^2(\Omega)}^2 + \lambda \TV(u) & \text{if }\ u \in BV(\Omega)\cap L^2(\Omega),\\
+\infty & \text{otherwise}.
\end{cases}
\end{equation}
Then there exist sequences $(c_n)_n \subset \R$ and $(M_n)_n\subset \N$ such that the sequence $(\mcE_n)_n$ $\Gamma$-converges to $\mcE$ (shortly $\Glim_{n\to\infty} \mcE_n = \mcE$) with respect to the $L^1(\Omega)$ topology.

Moreover, if $\mcE$ is coercive on $BV(\Omega)$, then we have that 
\begin{equation}\label{Eq:Convergence:Minimizer}
\lim_{n\to\infty} \inf_{u\in L^1(\Omega)} \mcE_n(u) = \min_{u\in L^1(\Omega)} \mcE(u)
\end{equation}
and every limit of a subsequence of $(u_{n})_{n}$ is a minimizer of $\mcE$, whereby $u_n$ is a minimizer of $\mcE_n$, $n\in\N$.
\end{theorem}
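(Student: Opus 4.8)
The plan is to verify the two conditions of \cref{Def:GammaConvergence} for a suitable choice of sequences and then to deduce the convergence of minimizers from the general machinery of $\Gamma$-convergence. A useful preliminary observation is that the $\liminf$-condition \eqref{Def:GammaConvergence:i} holds for \emph{any} diverging sequences, so the freedom in choosing $(c_n)_n,(M_n)_n$ is needed only to construct a recovery sequence for the $\limsup$-condition \eqref{Def:GammaConvergence:ii}. Accordingly I would simply fix $c_n\to\infty$ and $M_n\to\infty$ (say $c_n=M_n=n$) and show this works.

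For the $\liminf$-condition, let $u_n\to u$ in $L^1(\Omega)$. I may assume $\liminf_n\mcE_n(u_n)<\infty$ and, passing to a subsequence attaining it, that $\mcE_n(u_n)$ is finite, so that $u_n\in\mcHc{c_n}{M_n}\subset W^{1,\infty}(\Omega)\subset BV(\Omega)\cap L^2(\Omega)$ and $\TV(u_n)$ is bounded. Since $\|u_n\|_{L^1(\Omega)}$ is bounded (a convergent sequence), $(u_n)_n$ is bounded in $BV(\Omega)$, hence bounded in $L^2(\Omega)$ via the embedding $BV(\Omega)\hookrightarrow L^{d/(d-1)}(\Omega)$ for $d=2$ (respectively $BV(\Omega)\hookrightarrow L^\infty(\Omega)$ for $d=1$). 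Extracting a weakly convergent subsequence in $L^2(\Omega)$ and identifying its limit with $u$ through the strong $L^1$-convergence, I obtain $u_n\rightharpoonup u$ in $L^2(\Omega)$, whence $Tu_n\rightharpoonup Tu$ in $L^2(\Omega)$ by boundedness of $T$. All three terms are then lower semicontinuous along this convergence: $\TV$ with respect to $L^1$-convergence by the classical lower semicontinuity of the total variation, $\|T\cdot-g\|_{L^2(\Omega)}^2$ by convexity and strong continuity, and $\|T\cdot-g\|_{L^1(\Omega)}$ since weak $L^2$-convergence implies weak $L^1$-convergence on the bounded domain $\Omega$. Summing yields $\mcE(u)\le\liminf_n\mcE_n(u_n)$, using also that $u\in BV(\Omega)\cap L^2(\Omega)$ so that $\mcE(u)$ is given by the energy formula.

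For the $\limsup$-condition I only need to treat $u\in BV(\Omega)\cap L^2(\Omega)$ (otherwise $\mcE(u)=+\infty$ and $u_n\equiv u$ works). The goal is a sequence of ReLU-NNs $w_j\to u$ \emph{in $L^2(\Omega)$} with $\TV(w_j)\to\TV(u)$; then $Tw_j\to Tu$ in $L^2(\Omega)$ by continuity of $T$, both data terms converge, and the energy of $w_j$ tends to $\mcE(u)$. To obtain $L^2$-convergence rather than the mere intermediate convergence of \cref{Cor:UniversalApprox:BV}, I would first truncate, setting $u^{(k)}:=\max\{-k,\min\{k,u\}\}$, which satisfies $u^{(k)}\to u$ in $L^2(\Omega)$ and $\TV(u^{(k)})\to\TV(u)$ while keeping $u^{(k)}\in BV(\Omega)\cap L^\infty(\Omega)$; then mollify and piecewise-affinely interpolate on successively finer meshes as in the proof of \cref{Cor:UniversalApprox:BV}, noting that these operations preserve the uniform $L^\infty$-bound, so that the resulting $L^1$-convergence upgrades to $L^2$-convergence via $\|v-w\|_{L^2(\Omega)}^2\le(\|v\|_{L^\infty(\Omega)}+\|w\|_{L^\infty(\Omega)})\|v-w\|_{L^1(\Omega)}$, with the total variation still converging. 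Each continuous piecewise affine function so obtained is represented exactly by a ReLU-NN with some number of weights and biases $\tilde M_j$ and some weight bound $\tilde c_j$ (by \cite[Theorem 2.1]{ArBaMiMu:16}), which may be taken nondecreasing in $j$. Since $c_n,M_n\to\infty$, a standard diagonal argument then selects indices $j(n)\to\infty$ with $\tilde M_{j(n)}\le M_n$ and $\tilde c_{j(n)}\le c_n$, so that $u_n:=w_{j(n)}\in\mcHc{c_n}{M_n}$, $u_n\to u$ in $L^1(\Omega)$ and $\mcE_n(u_n)\to\mcE(u)$. I expect this recovery step to be the main obstacle: producing $L^2$- (not merely $L^1$-)convergence while controlling both the growth of the architecture and the magnitude of the weights in the diagonalization. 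Indeed, refining a piecewise affine function toward a discontinuity forces slopes of order $1/h$, which is precisely why the weight bound $c_n$ must be allowed to diverge.

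For the convergence of minimizers I would invoke the fundamental theorem of $\Gamma$-convergence, whose crucial ingredient here—equi-coercivity—is immediate. Since $\mcHc{c_n}{M_n}\subset BV(\Omega)\cap L^2(\Omega)$ and the energy formula agrees on this set, one has $\mcE_n\ge\mcE$ pointwise on $L^1(\Omega)$; hence the sublevel sets of $\mcE_n$ lie in those of the coercive functional $\mcE$, which are bounded in $BV(\Omega)$ and therefore precompact in $L^1(\Omega)$ by the compact embedding $BV(\Omega)\hookrightarrow\hookrightarrow L^1(\Omega)$. Let $u_n$ be a minimizer of $\mcE_n$, which exists by \cref{Prop:ExistenceMinimizer}. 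Testing the $\limsup$-condition against a minimizer of the lower semicontinuous, coercive functional $\mcE$ gives $\limsup_n\inf\mcE_n\le\min\mcE$, so $\mcE_n(u_n)=\inf\mcE_n$ is bounded and $(u_n)_n$ is precompact in $L^1(\Omega)$. For any $L^1$-convergent subsequence $u_n\to u^*$, the $\liminf$-condition yields $\mcE(u^*)\le\liminf_n\inf\mcE_n$; combined with $\mcE(u^*)\ge\min\mcE\ge\limsup_n\inf\mcE_n$, this forces $\mcE(u^*)=\min\mcE$ and $\inf\mcE_n\to\min\mcE$, proving both \eqref{Eq:Convergence:Minimizer} and that every subsequential limit of minimizers minimizes $\mcE$.
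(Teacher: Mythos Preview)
Your proof follows essentially the same route as the paper's: the $\liminf$-condition via lower semicontinuity, the recovery sequence via smooth approximation followed by piecewise-affine interpolation and the representation of continuous piecewise-affine functions as ReLU-NNs (\cite[Theorem~2.1]{ArBaMiMu:16}), and the convergence of minimizers via $BV$-compactness in $L^1$. Two differences are worth noting.

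First, you are more careful than the paper about the data terms. The paper only establishes $u_n\to u$ in $L^1$ with $\TV(u_n)\to\TV(u)$ and then simply invokes ``continuity of $T$ and $u\mapsto\alpha_1\|Tu-g\|_{L^1}+\alpha_2\|Tu-g\|_{L^2}^2$''. Since $T:L^2\to L^2$, this continuity really needs $L^2$-convergence, which your truncation step supplies. Likewise, your $\liminf$-argument explicitly extracts weak $L^2$-convergence from the $BV$-bound, whereas the paper just writes $\mcE(u)\le\liminf\mcE(u_n)$.

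Second, you and the paper organize the choice of $(c_n),(M_n)$ differently, and here your argument has a small gap. You fix $c_n=M_n=n$ in advance and then diagonalize, asserting that $\tilde M_{j(n)}\le M_n$ and $\tilde c_{j(n)}\le c_n$ give $w_{j(n)}\in\mcHc{c_n}{M_n}$. But $\mcH^{M}$ in this paper denotes the set of ReLU-NNs with a \emph{fixed architecture} having $M$ parameters; a network with $\tilde M_j\le M_n$ parameters need not belong to $\mcH^{M_n}$ unless the architectures are compatible. The paper handles this by \emph{deriving} $(M_n)$ from a sequence of nested meshes and then enlarging architectures monotonically (``by possibly adding new neurons to the existing hidden layers''), so that $\mcHc{c_{n-1}}{M_{n-1}}\subseteq\mcHc{c_n}{M_n}$ and all interpolants $\Pi_{h_n}v$ sit inside $\mcHc{c_n}{M_n}$ by construction. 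Your diagonalization becomes correct once you impose this nested-architecture choice rather than the bare condition $M_n=n$.
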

\begin{proof}
Note that for any $c\geq 0$ and $M\in\N$ we have $\mcHc{c}{M} \subseteq W^{1,2}(\Omega)$. 
We check the two conditions in \cref{Def:GammaConvergence}:
\begin{enumerate}[(i)]
\item\label{Proof:GammaConvergence:i} By the lower semicontinuity of the total variation in the $L^1(\Omega)$ topology \cite[Remark 3.5]{AmFuPa:00} we have for any $(u_n)_n \subset L^1(\Omega)$ converging to $u\in L^1(\Omega)$ that $\mcE(u) \leq \liminf_{n\to\infty} \mcE(u_n)$ and hence 
$$
\mcE(u) \leq \liminf_{n\to\infty} \mcE_n(u_n),
$$
which proves condition \ref{Def:GammaConvergence:i} of \cref{Def:GammaConvergence}.

\item\label{Proof:GammaConvergence:ii} To show condition \ref{Def:GammaConvergence:ii} of \cref{Def:GammaConvergence} we follow the strategy in \cite[Section 2.7]{Langer:11}. 

If $u\in L^1(\Omega)\setminus (BV(\Omega)\cap L^2(\Omega))$, then clearly $\mcE(u) = +\infty$. Hence, for any sequence $(u_n)_n\subset L^1(\Omega)$ which converges to $u$ we have $\limsup_{n\to\infty} \mcE_n(u_n) \leq \mcE(u)$.

For $u\in BV(\Omega)\cap L^2(\Omega)$ by \cite[Theorem 3.9]{AmFuPa:00} there exists a sequence $(v_n)_n \subset C^\infty(\Omega)$ converging to $u$ in $L^1(\Omega)$ such that
\begin{align}\label{Eq:1}
\lim_{n\to \infty} \int_\Omega |\nabla v_n|_2 \dx = \TV(u).
\end{align}
\begin{figure}[htbp]
\centering
\begin{tikzpicture}[scale=0.3]
\draw[thick] (8.5,2) rectangle (20.5,14);
\draw[thick] (8.5,10) -- (20.5,10);
\draw[thick] (8.5,6) -- (20.5,6);
\draw[thick] (12.5,2) -- (12.5,14);
\draw[thick] (16.5,2) -- (16.5,14);
\draw[thick] (16.5,2) -- (20.5,6);
\draw[thick] (12.5,2) -- (20.5,10);
\draw[thick] (8.5,2) -- (20.5,14);
\draw[thick] (8.5,6) -- (16.5,14);
\draw[thick] (8.5,10) -- (12.5,14);
\end{tikzpicture}
\hspace{1cm}
\begin{tikzpicture}[scale=0.3]
\draw[thick] (8.5,2) rectangle (20.5,14);
\draw[thick] (8.5,12) -- (20.5,12);
\draw[thick] (8.5,10) -- (20.5,10);
\draw[thick] (8.5,8) -- (20.5,8);
\draw[thick] (8.5,6) -- (20.5,6);
\draw[thick] (8.5,4) -- (20.5,4);
\draw[thick] (10.5,2) -- (10.5,14);
\draw[thick] (12.5,2) -- (12.5,14);
\draw[thick] (14.5,2) -- (14.5,14);
\draw[thick] (16.5,2) -- (16.5,14);
\draw[thick] (18.5,2) -- (18.5,14);
\draw[thick] (18.5,2) -- (20.5,4);
\draw[thick] (16.5,2) -- (20.5,6);
\draw[thick] (14.5,2) -- (20.5,8);
\draw[thick] (12.5,2) -- (20.5,10);
\draw[thick] (10.5,2) -- (20.5,12);
\draw[thick] (8.5,2) -- (20.5,14);
\draw[thick] (8.5,4) -- (18.5,14);
\draw[thick] (8.5,6) -- (16.5,14);
\draw[thick] (8.5,8) -- (14.5,14);
\draw[thick] (8.5,10) -- (12.5,14);
\draw[thick] (8.5,12) -- (10.5,14);
\draw[thick] (14.5,6) -- (16.5,8);
\end{tikzpicture}
\caption{Exemplary mesh $\mathcal T_{h_n}$ (left) with nested finer mesh (right).}\label{Fig:Mesh}
\end{figure}
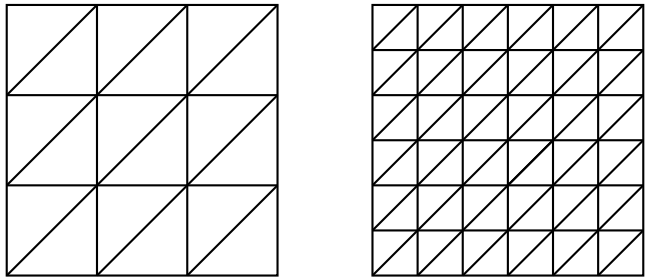
Let $(\mathcal{T}_{h_n})_n$ be a sequence of nested regular meshes of simplices with mesh size $h_n$; an example for $d=2$ (two dimensions) is shown in \cref{Fig:Mesh}. Then for all $v\in C^\infty(\Omega)$, we can construct a continuous piecewise affine function $v_{h_n}$, whereby $v_{h_n}$ coincides with $v$ in every node of $\mathcal{T}_{h_n}$. Note that any continuous piecewise affine function can be represented by a ReLU-NN \cite[Theorem 2.1]{ArBaMiMu:16}. Hence for any $n\in\N$ there exists a $c_n > 0$ and an $M_n\in\N$ such that $v_{h_n}\in \mcHc{c_n}{M_n}$. We choose $c_n$ and $M_n$  such that $c_n \geq c_{n-1}$ and $M_n\geq M_{n-1}$ by possibly adding new neurons to the existing hidden layers (i.e.\ not changing the depth of the neural networks) which yields the nested property $\mcHc{c_{n-1}}{M_{n-1}} \subseteq \mcHc{c_{n}}{M_n}$. We define the operator $\Pi_{h_n}: C^\infty(\Omega) \to \mcHc{c_n}{M_n}$ with $v\mapsto v_{h_n} = \Pi_{h_n} v$. By \cite[Theorem 3.1.5]{Ciarlet:02} or \cite[Corollary 1.109]{ErnGuermond:04} there are $C_1, C_2>0$ such that  
\begin{equation*}
\|v - \Pi_{h_n} v\|_{L^1(\Omega)} \leq C_1 h_n \int_\Omega |\nabla v|_2\dx
\end{equation*}
and
\begin{equation}\label{Eq:2}
\|\nabla v - \nabla \Pi_{h_n} v\|_{L^1(\Omega)} \leq C_2 h_n \int_\Omega |\nabla^2 v|_2\dx.
\end{equation}
Hence for all $v\in C^\infty(\Omega)$ we have
\begin{align*}
\Pi_{h_n} v \to v \quad \text{and}\quad \nabla\Pi_{h_n} v \to \nabla v
\end{align*}
in $L^1(\Omega)$ for $n\to \infty$. In fact, due to \eqref{Eq:1} for all $k\in\N$ there is $n_k> n_{k-1}$ such that for all $n> n_k$ we have
\begin{equation*}
\left |\TV(u) - \|\nabla v_n\|_{L^1(\Omega)}\right| \leq 2^{-k-1}.
\end{equation*}
Then, from \eqref{Eq:2} there exists an $\tilde{n}= \tilde{n}(n)$ such that
\begin{equation*}
\left|\|\nabla v_n\|_{L^1(\Omega)} - \|\nabla \Pi_{h_{\tilde{n}}} v_n\|_{L^1(\Omega)}\right| \leq 2^{-k-1}.
\end{equation*}
The latter two inequalities yield by using the triangle inequality
\begin{equation*}
\left|\TV(u) - \|\nabla \Pi_{h_{\tilde{n}}} v_n\|_{L^1(\Omega)}\right| \leq 2^{-k}.
\end{equation*}
Since $n\geq n_k$ we actually have $\tilde{n}=\tilde{n}(n_k) =: \tilde{n}_k$ and we can construct a sequence 
\begin{equation*}
\Pi_{h_{\tilde{n}_k}}v_{n_k} \in \mcHc{c_{\tilde{n}_k}}{M_{\tilde{n}_k}}
\end{equation*}
such that 
$$
\lim_{k\to\infty}\Pi_{h_{\tilde{n}_k}}v_{n_k} = u \qquad \text{and} \qquad \lim_{n\to\infty} \|\nabla \Pi_{h_{\tilde{n}_k}} v_{n_k}\|_{L^1(\Omega)} = \TV(u).
$$ 
Without loss of generality let $\tilde{n}_{k+1}>\tilde{n}_k$. Then by the nested property of $\mcHc{c_n}{M_n}$ we obtain
\begin{equation*}
\Pi_{h_{\tilde{n}_k}}v_{n_k} \in \mcHc{c_n}{M_n} \quad \text{for all } n\geq \tilde{n}_k.
\end{equation*}
We construct a sequence $u_n \in \mcHc{c_n}{M_n}$ such that
\begin{equation*}
\begin{split}
&u_n = \Pi_{h_{\tilde{n}_1}}v_{n_1} \quad \text{for } n = 1, \ldots,\tilde{n}_{1}-1,\text{ and }\\
&u_n = \Pi_{h_{\tilde{n}_k}}v_{n_k} \quad \text{for } n = \tilde{n}_k, \ldots,\tilde{n}_{k+1}-1, k\geq 1.
\end{split}
\end{equation*}
Then we have that $u_n \to u$ in $L^1(\Omega)$ and $\lim_{n\to\infty}\TV(u_n) \to \TV(u)$, where we used that $\int_\Omega |\nabla v|_2\dx = \TV(v)$ for all $v\in W^{1,1}(\Omega)$ \cite[Section 10.1]{AtBuMi:14} and $\mcHc{c}{M} \subset W^{1,1}(\Omega)$ for all $c\geq0$ and $M\in\N$.
This together with the continuity of $T$ and $u\mapsto \alpha_1 \|T u - g \|_{L^1(\Omega)} + \alpha_2 \|T u - g \|_{L^2(\Omega)}^2$ yields
$$
\lim_{n\to \infty} \mcE_n(u_n) = \mcE(u).
$$
\end{enumerate}
Hence \ref{Proof:GammaConvergence:i} and \ref{Def:GammaConvergence:ii} show the $\Gamma$-convergence of $(\mcE_n)_n$ to $\mcE$.

Since $\mcE$ is coercive on $BV(\Omega)$, there exists a constant $a>0$ such that $S:=\{u\in BV(\Omega) \colon \mcE(u)\leq a\}$ is bounded in $BV(\Omega)$. In particular we choose $a:= \mcE(0) = \mcE_n(0)$, $n\in\N$, which ensures that any minimizer of $(\mcE_n)_n$ and $\mcE$ is in $S$. In fact,  for all $n\in\N$ a minimizer of $\mcE_n$ is ensured by \cref{Prop:ExistenceMinimizer} and since $0\in \mcHc{c}{M}$ for all $c\ge 0$ and $M\in\N$ any minimizer of $\mcE_n$ is in $S$. Further, since $0\in BV(\Omega)$ and the $BV$-coercivity of $\mcE$ implies the existence of a minimizer of $\mcE$ in $BV(\Omega)$ \cite[Theorem 3.3]{Langer:17a}, also the minimizers of $\mcE$ lie in $S$.

The set $S$ is relatively compact in $L^p(\Omega)$ for $1\le p< \frac{d}{d-1}$ \cite[Theorem 2.5]{AcarVogel:94}. Hence $\overline{S}\subset L^1(\Omega)$ is compact such that $\inf_{L^1(\Omega)} \mcE_n = \inf_{\overline{S}} \mcE_n$ for any $n\in\N$. This together with the $\Gamma$-convergence of $(\mcE_n)_n$ to $\mcE$ yields \eqref{Eq:Convergence:Minimizer} due to \cite[Theorem 1.21]{Braides:02}.
Moreover, since any subset of a relatively compact set is relatively compact and the sequence of minimizers $(u_{n})_{n}\subseteq S$, we have that every limit of a subsequence of $(u_{n})_{n}$ is a minimizer of $\mcE$ \cite[Theorem 1.21]{Braides:02}.
\end{proof}

\begin{remark}
\Cref{Thm:GammaConvergence} states that there exist sequences $(c_n)_n$ and $(M_n)_n$, but it does not provide a concrete description how to construct them. From the proof, we deduce that the sequences $(c_n)_n$ and $(M_n)_n$ must be such that $\mcHc{c_n}{M_n}$ is sufficiently large, ensuring that the corresponding continuous piecewise affine approximation is in $\mcHc{c_n}{M_n}$. Additionally we choose $(c_n)_n$ and $(M_n)_n$ monotonically increasing, i.e.\ $c_n \leq c_{n+1}$ and $M_n \leq M_{n+1}$ for all $n\in \N$ which yields by the definition of $\mcHc{c}{M}$ that $\mcHc{c_n}{M_n} \subseteq \mcHc{c_{n+1}}{M_{n+1}}$ (nested property). This allows us to construct a particular sequence $(u_n)_n$. However, alternatively we could have also set $u_k:=\Pi_{h_{\tilde{n}_k}}v_{n_k}$ for $k\in\N$ and choose this as the respective converging sequence. As a consequence the nested property of $\mcHc{c}{M}$ is not needed and hence the requirement that $(c_n)_n$ and $(M_n)_n$ is monotonically increasing can be dropped.
\end{remark} 

\cref{Thm:GammaConvergence} shows a connection between \eqref{Eq:ProblemHw:constrained} and \eqref{Eq:ProblemTV}. In particular, for $c\to \infty$ and $M\to\infty$ we have that minimizers of \eqref{Eq:ProblemHw:constrained} approach a minimizer of \eqref{Eq:ProblemTV}.

\subsection{Error estimate}\label{Sec:ErrorEstimate}
We turn to \eqref{Eq:ProblemTV} to derive an a-posteriori error estimate, which quantifies a possible error between a solution of \eqref{Eq:ProblemTV} and any approximation. Differently to previous work, see \cite{AlHiLa:24,Bartels:15a,BartelsMilicevic:20,HintermullerRincon-Camacho:14}, our error estimate is solely based on the primal formulation of the total variation minimization problem.
 
Let $B:= 2\alpha_2 T^* T$ and define $\|u\|_B^2 := \langle Bu, u \rangle$, then for a minimizer  $u^*$ of \eqref{Eq:ProblemTV} and any arbitrary $v\in L^2(\Omega)$ we have 
 $$
 \frac{1}{2} \|u^* - v\|_{B}^2 \leq E(v) -E(u^*),
 $$
cf. \cite{AlHiLa:24}. We define $F:L^2(\Omega) \to \overline{\R}$ with $F(u) :=  \alpha_2 \| T u - g \|_{L^2(\Omega)}^2 $ and $G:L^2(\Omega) \to \overline{\R}$ with $G(u):= G_1(u) + G_{\TV}(u) := \alpha_1 \|T u - g \|_{L^1(\Omega)} + \lambda\TV(u)$. Then we obtain using the definition of subdifferential
\begin{align*}
\alpha_2 \|T(u^* - v)\|_{L^2(\Omega)}^2 &\leq F(v) -F(u^*) + G(v) -G(u^*)\\
&\leq \langle \xi_F, v - u^* \rangle + \langle \xi_{G}, v - u^* \rangle \\
&\leq \|\xi_F\|_{L^2(\Omega)} \|v - u^*\|_{L^2(\Omega)} + \|\xi_G\|_{L^2(\Omega)} \|v - u^*\|_{L^2(\Omega)}
\end{align*}
where $\xi_F\in \partial F(v)$ and $\xi_G\in \partial G(v)$. Since $F$ is Fr{\'e}chet differentiable we have $\xi_F = 2 \alpha_2 T^*(Tv - g)$ and obtain
\begin{align*}
\alpha_2 \frac{\|T(u^* - v)\|_{L^2(\Omega)}^2}{\|u^* - v\|_{L^2(\Omega)}} \leq 2 \alpha_2\|T^*(Tv - g)\|_{L^2(\Omega)}  + \|\xi_G\|_{L^2(\Omega)}
\end{align*}
where $\xi_G$ can be approximated by $\alpha_1 T^*\frac{Tv-g}{|T v-g| +\epsilon} + \lambda\nabla^* \frac{\nabla v }{|\nabla v|_2+ \epsilon}$ 
with $\epsilon>0$ being a very small constant.

If the bilinear form $\langle Bu, u \rangle$ is coercive, i.e.\ there exists a constant $\tilde{c}>0$ such that $\langle Bu, u \rangle = \alpha_2 \|Tu\|_{L^2(\Omega)}^2 \geq \tilde{c} \|u\|_{L^2(\Omega)}^2$ for any $u \in L^2(\Omega)$, we can further estimate as
\begin{align}\label{Eq:ErrorEstimate}
 \tilde{c} \|u^* - v\|_{L^2(\Omega)} \leq 2 \alpha_2\|T^*(Tv - g)\|_{L^2(\Omega)}  + \|\xi_G\|_{L^2(\Omega)}.
\end{align}
Note that the coercivity condition holds whenever $T$ is invertible, which is, for example, the case for the identity operator $T=I$.

\section{Numerical realization} \label{Sec:NumericalRealization}
In this section we propose a discretization of \eqref{Eq:ProblemHw:constrained} and connect the resulting optimization problem to \eqref{Eq:ProblemTV} again utilizing $\Gamma$-convergence.

\subsection{Numerical integration}
To realize a numerical implementation of \eqref{Eq:ProblemHw:constrained} and \eqref{Eq:ProblemHw:penalized} we need to approximate its integrals. Note that, since the functions in $\mcH$ are continuous, $\TV(u_\theta) = \int_\Omega |\nabla u_\theta(x)|_2 \dx$ for any $u_\theta \in \mcH$. Utilizing an appropriate quadrature \eqref{Eq:ProblemHw:constrained} can be approximated as
\begin{equation}\label{Eq:ProblemHw:constrained:discrete}
\begin{split}
\min_{u_\theta \in \mcHc{c}{M}} \alpha_1 \sum_{x\in \Omega^h} w_1^x |(T u_\theta)(x) - g(x) | &+ \alpha_2 \sum_{x\in\Omega^h} w_2^x|(T u_\theta)(x) - g(x) |^2  \\
&+ \lambda \sum_{x\in\Omega^h} w_{\TV}^x |(\nabla u_\theta)(x)|_2, 
\end{split}
\end{equation}
where $\Omega^h \subset \Omega$ is a finite set of points and $w_q^x$, $q\in \{1,2,\TV\}$, are suitable quadrature weights. The superscript $x$ in $w_q^x$ emphasize that the weights might depend on $x$. Further we define $N:=|\Omega^h|$ the size of $\Omega^h$, or in other words, the number of points in $\Omega^h$.

In image processing, the data points are typically organized in a highly structured manner. These points represent the pixels of the image and are arranged on a regular, equidistant grid. This means that each pixel is uniformly spaced from its neighbors, both horizontally and vertically, forming a systematic grid pattern. Since  we are interested in image processing tasks, in the sequel we assume that the data points can be arranged on a regular and equidistant grid. 
This allows for standard discretizations techniques of the gradient operator, like forward and backward differences. In particular, discretizing the gradient operator as well as a blurring operator on unstructured grids might be tricky. Hence in the sequel we sometimes also write for $d=2$ the number of data points $N=|\Omega^h|$ as $N = N_1 \times N_2$, where $N_1$ is the number of points in the first dimension and $N_2$ is the number of points in the second dimension.

Note that for $d=1$ a none-equidistant discretisation does not usually rise any troubles. However, to keep the presentation simple and concise also for $d=1$ we restrict to equidistant grids.

\subsection{Numerical realization of the total variation}
Considering the step function $g$ given in \eqref{Eq:Example:g}, it is clear that $\nabla g(x)= 0$ for almost every $x \in (-l_\ell, l_u)$, except in 0, where its gradient cannot be computed. Taking a finite number of points $x \in \Omega^h \subset (-l_\ell, l_u)\setminus\{0\}$, means that $\sum_{x\in\Omega^h} w_{\TV}^x |\nabla g(x)|_2 = 0$, although the total variation of $g$ in $(-l_\ell, l_u)$ is 1, i.e.\ $\TV(g)=1$. Hence, it does not seem to be a good idea to take the continuous gradient in \eqref{Eq:ProblemHw:constrained:discrete}. Instead we suggest to solve
\begin{equation}\label{Eq:ProblemHw:constrained:discrete2}
\begin{split}
\min_{u_\theta \in \mcHc{c}{M}} \Big\{E_\theta (u_\theta):=\alpha_1 \sum_{x\in\Omega^h} w_1^x |(T^h u_\theta(x) - g(x) | &+ \alpha_2 \sum_{x\in\Omega^h} w_2^x|(T^h u_\theta(x) - g(x) |^2  \\
&+ \lambda \sum_{x\in\Omega^h} w_{\TV}^x |\nabla^h u_\theta(x)|_{\blacktriangleright} \Big\},
\end{split}
\end{equation}
where $\nabla^h$ denotes a suitable discrete gradient, $|\cdot|_{\blacktriangleright}$ a respective discrete norm and $T^h$ is a suitable discretization of $T$. The choice of 
$\nabla^h$ and $|\cdot|_{\blacktriangleright}$ will become clear later. More precisely, the below $\Gamma$-convergence result from \cref{Sec:Gammaconvergence:discrete} suggests a particular choice of $\nabla^h$ and $|\cdot|_{\blacktriangleright}$, which are associated to each other.

Note that \eqref{Eq:ProblemHw:constrained:discrete2} looks like DIP-TV \cite{LiSuXuKa:19} if $|\cdot|_{\blacktriangleright} = |\cdot|_1$ and DIP \cite{UlVeLe:18} if $\lambda=0$.

\subsection{$\Gamma$-convergence}\label{Sec:Gammaconvergence:discrete}
We investigate now the connection of the discretized problem \eqref{Eq:ProblemHw:constrained:discrete2} with the original continuous one in \eqref{Eq:ProblemTV}. 
Thereby we restrict ourselves for simplicity to $\Omega = (0,1)^d$ for $d=1,2$. That is for $d=2$ our domain is a square easily allowing for a finite difference method. However, note that an extension to more general domains is straightforward.

As in \cite[Section 2.7]{Langer:11} and in the proof of \cref{Thm:GammaConvergence}, we define a sequence $(\mathcal{T}_{h_n})_n$ of nested regular meshes of simplices with mesh size $h_n$ covering $\overline{\Omega}$; see  \cref{Fig:Mesh} for a two-dimensional example. Associated to this mesh we define the space of continuous and piecewise affine functions
\begin{equation}\label{Eq:P1}
\mathcal{A}_{h_n} := \{u \in C(\overline\Omega) \colon u_{\mid_K} \in P_1(K), K\in \mathcal{T}_{h_n}\},
\end{equation}
where $P_1(K)$ denotes the space of affine functions on a cell $K$. 

For $d=1$ we let $x_i < x_{i+1}$ for $i = 1,\ldots, n \in\N$, be an equidistant discretization of $\Omega$ such that $x_{i+1}- x_i = \frac{1}{n} = h_n$ and $x_i = \frac{i-1}{n}$ for $i = 1,\ldots, n+1$. Using the notation $u_i:=u(x_i)$ for $u\in \mathcal{A}_{h_n}$ on the interval $K_i = [x_i, x_{i+1}]$ we have $\nabla u = \frac{u_{i+1}-u_i}{h_n}$. Since $\nabla u$ is constant on each interval and the length of each interval is $h_n$ we obtain 
\begin{equation}\label{Eq:dTV1d}
\int_0^1 |\nabla u(x)|\dx = \sum_{i=1}^{n} \int_{x_i}^{x_{i+1}} \left| \frac{u_{i+1}-u_i}{h_n} \right| \dx =  \sum_{i=1}^{n} \left| u_{i+1}-u_i \right|.
\end{equation}
Hence in \eqref{Eq:ProblemHw:constrained:discrete2} any quadrature which is exact for constants can be used, while the gradient might be approximated by standard forward differences. 

For $d=2$ and $n\in\N$, we set $h_n = 2^{-n}$ and denote by $x_{i,j}=(2^{-n} (i-1), 2^{-n} (j-1))$ for $i,j=1,\ldots, 2^n+1$ the nodes of the respective mesh. Note that $h_n= x_{i+1,j} - x_{i,j} = x_{i,j+1}-x_{i,j}$ for $i,j=1,\ldots, 2^n$ is not the diameter of the triangles, but the length of the two shorter edges, while the diameter of every triangle is $\sqrt{2} h_n$. For $u\in\mathcal{A}_{h_n}$ we write $u_{i,j}:=u(x_{i,j}).$ Each pair of triangles, $K^-_{i,j}$ and $K^+_{i,j}$, that share a long side combine to form a square, see \cref{Figsquare}.
\begin{figure}[htbp]
	\centering
	\begin{tikzpicture}[scale=0.25]
\draw[thick] (0,0) rectangle (12,12);
\node at (3,9) {\(K^-_{i,j}\)};
\node at (9,3) {\(K^+_{i,j}\)};
\draw[thick] (0,0) -- (12,12);
\filldraw (0,0) circle (6pt);  
\filldraw (12,0) circle (6pt);  
\filldraw (0,12) circle (6pt);  
\filldraw (12,12) circle (6pt);  
\node at (-0.5,-0.8) {\(x_{i+1,j}\)};  
\node at (12.5,-0.8) {\(x_{i+1,j+1}\)};  
\node at (-0.5,12.9) {\(x_{i,j}\)};  
\node at (12.5,12.9) {\(x_{i,j+1}\)};  
\end{tikzpicture}
	\caption{\small \it Square $K^-_{i,j}\cup K^+_{i,j}$ formed by a pair of triangles.}\label{Figsquare}
\end{figure}
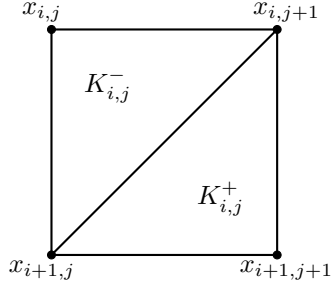
The gradient of $u\in\mathcal{A}_{h_n}$ in the square $K^-_{i,j}\cup K^+_{i,j}$ is given as
\begin{equation}\label{Eq:dGradient2D}
\begin{split}
&\nabla u_{|_{K_{i,j}^-}}=\left(\frac{u_{i+1,j}-u_{i,j}}{h_n}, \frac{u_{i,j+1}-u_{i,j}}{h_n}\right) =: \frac{1}{h_n}\left(v^-_{1,i,j}, v^-_{2,i,j}\right)\\
&\nabla u_{|_{K_{i,j}^+}}=\left(\frac{u_{i+1,j+1}-u_{i,j+1}}{h_n}, \frac{u_{i+1,j+1}-u_{i+1,j}}{h_n}\right)=: \frac{1}{h_n}\left(v^+_{1,i,j}, v^+_{2,i,j}\right).
\end{split}
\end{equation}
Noting that the area of each triangle is $\frac{h_n^2}{2}$ a straightforward calculation yields
\begin{equation}\label{Eq:dTV}
\int_{\Omega} |\nabla u(x)|_2 \dx =
   \frac{h_n}{2} \sum_{i=1}^{2^n} \sum_{j=1}^{2^n} \left(\left|\left(v^-_{1,i,j}, v^-_{2,i,j}\right)\right|_2 + \left|\left(v^+_{1,i,j}, v^+_{2,i,j}\right)\right|_2\right).
\end{equation}
This suggests to set $\Omega^h=\{x_{i,j} \colon i,j=1,\ldots, 2^n\}$, $w_{q}^x = h_n^2$ for all $x\in\Omega^h$ and $q\in\{1,2,\TV\}$, and $|v|_\blacktriangleright = \frac{1}{2}\left(|v^-|_2 + |v^+|_2\right)$ for $ v=(v^-,v^+)$ with $v^-,v^+\in\R^2$ in \eqref{Eq:ProblemHw:constrained:discrete2}.

Note that \eqref{Eq:dTV1d} and \eqref{Eq:dTV} may be interpreted as the discrete analogue of the continuous total variation of $u$ in $\Omega$. 
Hence the ``integral counterpart'' to the discrete functional in \eqref{Eq:ProblemHw:constrained:discrete2} is then given by
\begin{equation}\label{Eq:IntegralCounterpart}
\mcE_n(u) := \begin{cases}
\alpha_1 \|T u - g \|_{L^1(\Omega)} + \alpha_2 \|T u - g \|_{L^2(\Omega)}^2 + \lambda \int_{\Omega} |\nabla u(x)| \dx & \text{if } u\in \mathcal{A}_{h_n} \cap \mcHc{c_n}{M_n}, \\
+\infty &\text{otherwise},
\end{cases}
\end{equation}
where $(c_n)_n\subset \R$ and $(M_n)_n \subset \N$. 
Recall that for every $c_n\geq 0$ and $M_n$ the space $\mcHc{c_n}{M_n}$ is a set of continuous piecewise affine functions. If $c_n$ and $M_n$ are large enough (possibly infinite) then $\mathcal{A}_{h_n} \subseteq \mcHc{c_n}{M_n}$ by \cite[Theorem 2.1]{ArBaMiMu:16}. Moreover, $0\in \mathcal{A}_{h_n} \cap \mcHc{c_n}{M_n}$ and hence $\mathcal{A}_{h_n} \cap \mcHc{c_n}{M_n}\neq \emptyset$ for any $n\in\N$.

We say that the discrete functional in \eqref{Eq:ProblemHw:constrained:discrete2} $\Gamma$-converges to $\mcE$ defined in \eqref{Eq:E}, if $(\mcE_n)_n$, where $\mcE_n$ is defined in \eqref{Eq:IntegralCounterpart}, $\Gamma$-converges to $\mcE$, cf. \cite[Definition 4.2]{Braides:02}.

\begin{theorem}\label{Thm:Gamma:Convergence:discrete}
Let $\mcE_n, \mcE:L^1(\Omega) \to \overline{\R}$ be defined as in \eqref{Eq:IntegralCounterpart} and \eqref{Eq:E}, respectively, and $u_n$ be a minimizer of $\mcE_n$, $n\in\N$. Then there exist sequences $(c_n)_n \subset \R$ and $(M_n)_n \subset \N$ such that $(\mcE_n)_n$ $\Gamma$-converges to $\mcE$ with respect to the $L^1(\Omega)$ topology. Moreover, if $\mcE$ is coercive on $BV(\Omega)$, then we have that 
\begin{equation*}
\lim_{n\to\infty} \inf_{u\in L^1(\Omega)} \mcE_n(u) = \min_{u\in L^1(\Omega)} \mcE(u)
\end{equation*}
and every limit of a subsequence of $(u_{n})_{n}$ is a minimizer of $\mcE$.
\end{theorem}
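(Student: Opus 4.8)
The plan is to mirror the two-step proof of \cref{Thm:GammaConvergence}: verify the $\liminf$- and $\limsup$-conditions of \cref{Def:GammaConvergence}, and then deduce the convergence of minimizers from equicoercivity together with the fundamental theorem of $\Gamma$-convergence \cite[Theorem 1.21]{Braides:02}. The only structural novelty relative to \cref{Thm:GammaConvergence} is the additional constraint $u\in\mathcal{A}_{h_n}$ in the definition \eqref{Eq:IntegralCounterpart} of $\mcE_n$: the admissible functions must now be piecewise affine on the \emph{specific} mesh $\mathcal{T}_{h_n}$, not merely arbitrary ReLU-NNs with bounded weights. I will argue that this restriction is met automatically by the recovery sequence.

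For the $\liminf$-condition, note that $\mathcal{A}_{h_n}\subset W^{1,1}(\Omega)$, so whenever $\mcE_n(u_n)<+\infty$ one has $u_n\in\mathcal{A}_{h_n}\cap\mcHc{c_n}{M_n}$ and hence $\int_\Omega|\nabla u_n|_2\dx=\TV(u_n)$, giving $\mcE_n(u_n)=\mcE(u_n)$. For any $(u_n)_n\to u$ in $L^1(\Omega)$ the terms with $\mcE_n(u_n)=+\infty$ do not lower the $\liminf$, so the $L^1$-lower semicontinuity of the total variation \cite[Remark 3.5]{AmFuPa:00} together with the continuity of the data terms yields $\mcE(u)\le\liminf_n\mcE(u_n)\le\liminf_n\mcE_n(u_n)$, exactly as in the corresponding step of \cref{Thm:GammaConvergence}.

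For the $\limsup$-condition the case $u\notin BV(\Omega)\cap L^2(\Omega)$ is trivial because $\mcE(u)=+\infty$. For $u\in BV(\Omega)\cap L^2(\Omega)$ I would repeat the construction from \cref{Thm:GammaConvergence}: first choose $(v_m)_m\subset C^\infty(\Omega)$ with $v_m\to u$ in $L^1(\Omega)$ and $\int_\Omega|\nabla v_m|_2\dx\to\TV(u)$ via \cite[Theorem 3.9]{AmFuPa:00}, and then replace each $v_m$ by its nodal interpolant $\Pi_{h_n}v_m$. The decisive point is that $\Pi_{h_n}v_m\in\mathcal{A}_{h_n}$ by construction, so the mesh constraint is satisfied for free. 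The interpolation estimates \cite[Theorem 3.1.5]{Ciarlet:02} (cf.\ \eqref{Eq:2}) give $\Pi_{h_n}v_m\to v_m$ and $\nabla\Pi_{h_n}v_m\to\nabla v_m$ in $L^1(\Omega)$ as $n\to\infty$, whereupon the same diagonal extraction interlacing the smoothing index $m$ with the mesh index $n$ produces $u_n:=\Pi_{h_{\tilde n_k}}v_{m_k}\in\mathcal{A}_{h_n}$ with $u_n\to u$ in $L^1(\Omega)$ and $\int_\Omega|\nabla u_n|_2\dx=\TV(u_n)\to\TV(u)$. Choosing $(c_n)_n$ and $(M_n)_n$ monotonically increasing and large enough that $u_n\in\mcHc{c_n}{M_n}$ (possible by \cite[Theorem 2.1]{ArBaMiMu:16}) and invoking the continuity of $T$ and of the data terms yields $\mcE_n(u_n)\to\mcE(u)$, as required.

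For the convergence of minimizers I would argue verbatim as at the end of \cref{Thm:GammaConvergence}. Minimizers of $\mcE_n$ exist because the admissible set $\mathcal{A}_{h_n}\cap\mcHc{c_n}{M_n}$ is nonempty (it contains $0$), closed and bounded, hence a compact subset of the finite-dimensional space $\mathcal{A}_{h_n}$, on which $\mcE_n$ is lower semicontinuous---the same reasoning as in \cref{Prop:ExistenceMinimizer}. Coercivity of $\mcE$ on $BV(\Omega)$ makes $S:=\{u\in BV(\Omega):\mcE(u)\le\mcE(0)\}$ bounded in $BV(\Omega)$ and therefore relatively compact in $L^1(\Omega)$ \cite[Theorem 2.5]{AcarVogel:94}; since $\mcE_n(0)=\mcE(0)$, every minimizer $u_n$ satisfies $\mcE(u_n)=\mcE_n(u_n)\le\mcE(0)$ and thus lies in $S$. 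Passing to the compact set $\overline S\subset L^1(\Omega)$ and combining the established $\Gamma$-convergence with \cite[Theorem 1.21]{Braides:02} gives both the convergence of the infima and the fact that every limit of a subsequence of $(u_n)_n$ minimizes $\mcE$. I expect the diagonal extraction in the $\limsup$-step to be the only delicate point, since one must coordinate the smoothing and mesh refinements so that $L^1$-convergence and convergence of the gradient norms hold simultaneously while keeping the competitors inside $\mathcal{A}_{h_n}$; everything else transcribes directly from \cref{Thm:GammaConvergence}.
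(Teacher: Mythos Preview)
Your proposal is correct and follows essentially the same route as the paper's proof: lower semicontinuity for the $\liminf$-condition, smooth approximation plus nodal interpolation on $\mathcal{T}_{h_n}$ (with $c_n,M_n$ chosen large enough) for the $\limsup$-condition, and \cite[Theorem 1.21]{Braides:02} together with relative $L^1$-compactness of the sublevel set for the convergence of minimizers. The only minor refinement in the paper is that it takes the approximating sequence in $C^\infty(\overline{\Omega})$ (via \cite[Theorem 10.1.2]{AtBuMi:14}) rather than $C^\infty(\Omega)$, so that nodal values at boundary vertices of $\mathcal{T}_{h_n}$ are well defined; otherwise your argument matches.
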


\begin{proof}
The proof is similar to the one of \cref{Thm:GammaConvergence}. We only recall the important steps and differences.

The $\liminf$-condition (\cref{Def:GammaConvergence}\ref{Def:GammaConvergence:i}) follows directly from the lower semicontinuity of the functional $\mcE$.

The $\limsup$-condition (\cref{Def:GammaConvergence}\ref{Def:GammaConvergence:ii}) clearly holds for any $u \in L^1(\Omega) \setminus (BV(\Omega) \cap L^2(\Omega))$. For $u \in BV(\Omega) \cap L^2(\Omega)$ there is a sequence $(v_n)_n \in C^\infty(\overline{\Omega})$ converging to $u$ in $L^1(\Omega)$ such that $\lim_{n\to\infty} \int_\Omega |\nabla v_n|_2\dx = \TV(u)$ \cite[Theorem 10.1.2]{AtBuMi:14}. Then for all $v\in C^\infty(\overline{\Omega})$, we can construct a continuous piecewise affine function $v_{h_n}\in\mathcal{A}_{h_n}$, whereby $v_{h_n}$ coincides with $v$ in every node of $\mathcal{T}_{h_n}$. 
Further we choose $c_n$ and $M_n$ such that $v_{h_n}\in \mcHc{c_n}{M_n}$ as well, i.e.\ $v_{h_n}\in\mathcal{A}_{h_n} \cap \mcHc{c_n}{M_n}$. Defining the interpolation operator $\Pi_{h_n}: C^\infty(\overline{\Omega}) \to \mathcal{A}_{h_n} \cap \mcHc{c_n}{M_n}$ with $v\mapsto v_{h_n} = \Pi_{h_n} v$ and utilizing interpolation error estimates, see \cite[Theorem 3.1.5]{Ciarlet:02}, we can construct analog to the proof of \cref{Thm:GammaConvergence} a sequence $(u_k)_k$ given as
\begin{equation*}
u_k := \Pi_{h_{k}}v_{k} \in \mathcal{A}_{h_{k}} \cap \mcHc{c_{k}}{M_{k}}
\end{equation*}
such that 
$$
\lim_{k\to\infty} u_k = u \qquad \text{and} \qquad \lim_{n\to\infty} \TV(u_k) = \TV(u).
$$ 
Noting that $T$ and $u\mapsto \alpha_1 \|T u - g \|_{L^1(\Omega)} + \alpha_2 \|T u - g \|_{L^2(\Omega)}^2$ are continuous yields 
$$
\lim_{k\to \infty} \mcE_k(u_k) = \mcE(u)
$$
and hence the $\Gamma$-convergence assertion.

Since $\mcE$ is coercive on $BV(\Omega)$ it possesses a minimizer in $BV(\Omega)$ \cite[Theorem 3.3]{Langer:17a} and the set $S:=\{u\in BV(\Omega) \colon \mcE(u)\leq a\}$ is bounded in $BV(\Omega)$, where $a:= \mcE(0)$. 
Since $0\in BV(\Omega)$ and $0\in \mathcal{A}_{h_n}\cap \mcHc{c_n}{M_n}$ for all $n\in\N$, any minimizer of $(\mcE_n)_n$ and $\mcE$ is in $S$. Thereby \cref{Prop:ExistenceMinimizer} ensures a minimizer of $\mcE_n$.
Since $\overline{S}$ is compact in $L^1(\Omega)$ \cite[Theorem 2.5]{AcarVogel:94} and such that $\inf_{L^1(\Omega)} \mcE_n = \inf_{\overline{S}} \mcE_n$, the $\Gamma$-convergence of $(\mcE_n)_n$ to $\mcE$ and \cite[Theorem 1.21]{Braides:02} yield
\begin{equation*}
\lim_{n\to\infty} \inf_{u\in L^1(\Omega)} \mcE_n(u) = \min_{u\in L^1(\Omega)} \mcE(u).
\end{equation*}
Moreover, since any subset of a compact set is relatively compact and the sequence of minimizers $(u_{n})_{n}\subseteq \overline{S}$, we have that every limit of a subsequence of $(u_{n})_{n}$ is a minimizer of $\mcE$ \cite[Theorem 1.21]{Braides:02}.
\end{proof}

\begin{remark}
Contrary to the proof of \cref{Thm:GammaConvergence} and \cite{Langer:11}, in the proof of \cref{Thm:Gamma:Convergence:discrete} we do not use the nested property of the mesh.  However, this can be easily achieved by requesting that $c_n \geq c_{n-1}$ and $M_n \geq M_{n-1}$, whereby at most neurons to the existing hidden layers are added, for all $n>1$, as in the proof of \cref{Thm:GammaConvergence}. 
\end{remark}

The $\Gamma$-convergence result relies on a continuous and piecewise affine approximation of an arbitrarily smooth function. This approximation suggests particular discretisations for the gradient and total variation:
\begin{enumerate}[(i)]
\item In one-dimension ($d=1$), the gradient of a continuous piecewise affine function $u$ in the interval $[x_i, x_{i+1}]$ is given by $\nabla u = \frac{u_{i+1}-u_{i}}{h_n}$. This suggests to use forward differences to approximate the gradient operator. Nevertheless, analogously we could have also set $\nabla u = \frac{u_{i}-u_{i+1}}{h_n}$ suggesting backward differences. It is clear that this does not change the total variation of $u$, see \eqref{Eq:dTV1d}, and hence the $\Gamma$-convergence proof keeps the same. In our numerical experiments for $d=1$ we will use the forward difference approximation of the gradient.

\item In two-dimensions ($d=2$), the gradient is given by \eqref{Eq:dGradient2D}, which means in a finite difference setting a combination of forward and backward differences is suggested. More precisely, the finite difference gradient $\nabla^h$ is then
\begin{equation}\label{Eq:ForwardBackwardGradient}
\nabla^h u_{i,j} = \nabla_{\text{FB}}^h u_{i,j}:=\left( \frac{u_{i+1,j}-u_{i,j}}{h_n} , \frac{u_{i,j+1}-u_{i,j}}{h_n}, \frac{u_{i,j}-u_{i-1,j}}{h_n}, \frac{u_{i,j}-u_{i,j-1}}{h_n}\right) \in\R^4
\end{equation}
and the discrete total variation as in \eqref{Eq:dTV}. That is, for $v=(v_1,v_2,v_3,v_4)\in\R^4$ and $r\in\N$ we define $|v|_{r,1}$ as 
\begin{equation}\label{Eq:21Norm}
|v|_{r,1}:=\frac{1}{2}\left(\left|\left( v_1,v_2\right)\right|_r +  \left|\left( v_3,v_4\right)\right|_r\right).
\end{equation}
Then the discrete total variation of $u\in\R^{2^n\times 2^n}$ is given as $h_n^2\sum_{i,j=1}^{2^n}|\nabla^h u_{i,j}|_{r,1}$. If $r=1$, then we have that $h_n^2\sum_{i,j=1}^{2^n}|\nabla^h u_{i,j}|_{1,1} = h_n^2\sum_{i,j=1}^{2^n}|\nabla^h u_{i,j}|_{1}$, while for $r\neq 1$ in general we have $h_n^2\sum_{i,j=1}^{2^n}|\nabla^h u_{i,j}|_{r,1} \neq h_n^2\sum_{i,j=1}^{2^n}|\nabla^h u_{i,j}|_{r}$.
\end{enumerate}
Note that if we set $c_n$ and $M_n$ in \eqref{Eq:IntegralCounterpart} such that $\mathcal{A}_{h_n} \cap \mcHc{c_n}{M_n} = \mathcal{A}_{h_n}$, then \cref{Thm:Gamma:Convergence:discrete} shows the $\Gamma$-convergence of a finite difference approximation of $\mcE$ to $\mcE$, as in \cite{Langer:11}. We emphasize that showing the $\Gamma$-convergence of a finite difference approximation of $\mcE$ to $\mcE$ also seems to require a continuous approximation of a $C^\infty$-function. In fact, for piecewise constant approximations a $\Gamma$-convergence result cannot be shown for total variation minimization, as a counterexample presented in \cite{Bartels:12} illustrates.

\subsubsection{Choice of discretisation}

Based on the above result we will make the following choices on the total variation:
\begin{enumerate}[leftmargin=*,align=left]
\item[$d=1$]: We use a standard forward difference discretised gradient operator with incorporated boundary conditions and denote it by $\nabla^h_{\text{F}}$.
This leads to the one-dimensional discrete total variation
$$
\frac{1}{N} \sum_{x\in\Omega^h} |\nabla^h_{\text{F}}u(x)|.
$$

\item[$d=2$]: Here we will consider two different discretised gradient operators. The standard forward difference discretized gradient operator denoted by $\nabla^h_{\text{F}}$ and the above suggested forward-backward discretisation $\nabla^h_{\text{FB}}$. For both choices we incorporate in a standard way respective boundary conditions. The two-dimensional discrete total variations read then as
$$
\TV_2^h(u) := \frac{1}{N} \sum_{x\in\Omega^h} |\nabla^h_{\text{F}}u(x)|_2 \qquad \text{and} \qquad \TV_{2,1}^h(u) :=  \frac{1}{N} \sum_{x\in\Omega^h} |\nabla^h_{\text{FB}}u(x)|_{2,1}
$$
respectively.

We note that $\TV_2^h$ is extensively used in image processing and is referred to as isotropic total variation in the literature, see e.g.\ \cite{Condat:17}. That is the main reason why we additionally consider $\TV_2^h$ in our numerical experiments, despite the missing $\Gamma$-convergence result.
\end{enumerate}

\subsection{Identifying and solving a numerical issue}\label{Sec:NumericalIssue}
The implementation of the neural network model is based on \tf\ and in order to find a minimizer we utilize the optimization algorithm Adam \cite{KingmaBa:15}, which is a stochastic gradient descent method and already pre-implemented in \tf. 
It turns out that Adam in \tf\ has difficulties in optimizing \eqref{Eq:ProblemHw:constrained:discrete2} with $\TV_2^h$ and $\TV_{2,1}^h$ for $d=2$,  which seems due to the presence of the square-root. At least after some iterations the gradient becomes so steep that the algorithm returns an \textsc{NaN} and further computations seem impossible. In the rest of this section we set $d=2$ and present and analyze options to circumvent this issue. In particular we will approximate the total variation considering the following options:
\begin{enumerate}
\item Huber-regularization: This is a commonly used approximation technique for total variation minimization, see for example \cite{BuPaPaSc:16,HiLaAl:23,PoCrBiCh:10}. We define for $\gamma>0$ the Huber-functions $\phi_{2,\gamma}: \R^2 \to [0,\infty)$ and $\phi_{2,1,\gamma}: \R^4 \to [0,\infty)$ by
\begin{equation} \label{eq:phidef}
    \phi_{2,\gamma}(u) := \begin{cases}
      \frac{1}{2\gamma} |u|_2^2 & \text{if } |u|_2 \le \gamma, \\
      |u|_2 - \frac{\gamma}{2} & \text{if } |u|_2 > \gamma,
    \end{cases}
  \end{equation}
  for $u\in\R^2$ and 
 \begin{equation*}
\phi_{2,1,\gamma} (w) := \frac{1}{2}\left(\phi_{2,\gamma} (u) + \phi_{2,\gamma} (v) \right) = \begin{cases}
\frac{1}{2} \left(|u|_2 + |v|_2\right) - \frac{1}{2} \gamma & \text{if } |u|_2 > \gamma \text{ and } |v|_2> \gamma\\
\frac{1}{2}|u|_2 - \frac{1}{4}\gamma + \frac{1}{4\gamma} |v|_2^2 & \text{if } |u|_2 > \gamma \text{ and } |v|_2\le \gamma \\
\frac{1}{2}|v|_2 - \frac{1}{4}\gamma + \frac{1}{4\gamma} |u|_2^2 & \text{if } |u|_2 \le \gamma  \text{ and } |v|_2 > \gamma\\
\frac{1}{4\gamma}\left(|u|_2^2 + |v|_2^2\right) & \text{if } |u|_2 \le \gamma \text{ and } |v|_2\le \gamma
\end{cases}
\end{equation*}
for $w=(u,v)$ with $u,v\in \R^2$.
Then in the case of $\TV_2^h$ and $\TV_{2,1}^h$ we replace $|\cdot|_2$ by $\phi_{2,\gamma}(\cdot)$ and $|\cdot|_{2,1}$ by $\phi_{2,1,\gamma}(\cdot)$. Hence the Huber-regularized discrete total variation is then given by $h^2 \sum_{x\in\Omega^h} \phi_{2,\gamma}(\nabla^h u_\theta(x))$ or $h^2\sum_{x\in\Omega^h} \phi_{2,1,\gamma}(\nabla^h u_\theta(x))$ respectively.

\item $\gamma$-lifting: For $\gamma>0$ we replace $|\cdot|_2$ by $|\cdot|_{2,\gamma}$ and $|\cdot|_{2,1}$ by $|\cdot|_{2,1,\gamma}$, whereby for $v=(v_1,v_2)\in\R^2$ we define $|v|_{2,\gamma} := \sqrt{v_1^2 + v_2^2 + \gamma}$ and for $w=(w_1,w_2,w_3,w_4)\in\R^4$ we define 
$$
|w|_{2,1,\gamma} := \frac{1}{2}\left(\sqrt{w_1^2 + w_2^2 + \gamma} + \sqrt{w_3^2 + w_4^2 + \gamma}\right) = \frac{1}{2}\left(|(w_1,w_2)|_{2,\gamma} + |(w_3,w_4)|_{2,\gamma} \right).
$$
 The choice $|\cdot|_{2,\gamma}$ has been extensively studied in \cite{AcarVogel:94}.
\item $\max$-$\gamma$-lifting: Instead of $|\cdot|_2$ and $|\cdot|_{2,1}$ we use $\max\{|\cdot|_2,\gamma\}$ and $|\cdot|_{2,1,\max_\gamma}$ respectively, where $\gamma>0$ and 
$$
|w|_{2,1,\max_\gamma}:=\frac{1}{2}\left(\max\{|(w_1,w_2)|_{2},\gamma\}+ \max\{|(w_3,w_4)|_{2},\gamma\}\right) \qquad \text{for } w\in\R^4.
$$

\end{enumerate}
In \cite[Proposition 3.3]{HiLaAl:23} it is shown that the Huber-regularization $\phi_{2,\gamma}$ approximates the norm $|\cdot|_2$ from below. Analogously one also shows that $\phi_{2,1,\gamma}$ approximates the norm $|\cdot|_{2,1}$ from below. On the contrary the $\gamma$-lifting and $\max$-$\gamma$-lifting approximate the corresponding norm from above, since $|v|_2 \leq |v|_{2,\gamma}$ for all $v\in\R^2$ and $|w|_{2,1} \leq |w|_{2,1,\gamma}$ for all $w\in\R^4$. Further all three approximations converge for $\gamma\to 0$ to the respective norm.

We recall that $\mcH \subset W^{1,1}(\Omega)$ and hence for any $u\in \mcH$ we have $\TV(u) = \int_\Omega |\nabla u| \dx$.
We analyze now the $\Gamma$-convergence of \eqref{Eq:En} with $\TV(u) = \int_\Omega \varphi_\gamma(\nabla u) \dx$, where $\varphi_\gamma (\cdot) \in\left\{|\cdot|_{2,\gamma}, \max\{|\cdot|_{2},\gamma\}, \phi_{2,\gamma}(\cdot) \right\} $ and how their minimizers behave for $\gamma\to 0$.

\subsubsection{$\Gamma$-convergence of total variation approximations} 

Using the notation and setting of \cref{Sec:Gammaconvergence:discrete} one easily computes on a regular mesh of triangles for $u\in\mathcal{A}_{h_n}$ that 
\[
\int_\Omega \varphi_\gamma(\nabla u(x)) \dx = \frac{h_n^2}{2} \sum_{i,j=1}^{2^n}  \left(\varphi_\gamma\left(\frac{1}{h_n}(v^-_{1,i,j}, v^-_{2,i,j})\right) + \varphi_\gamma\left(\frac{1}{h_n}(v^+_{1,i,j}, v^+_{2,i,j})\right)\right),
\]
cf.\ \eqref{Eq:dTV}. Hence we have
\begin{equation}\label{Eq:varphi:gamma}
\int_\Omega \varphi_\gamma(\nabla u(x)) \dx = \begin{cases}
h_n^2 \sum_{i,j=1}^{2^n}|\nabla^h_{\text{FB}} u_{i,j}|_{2,1,\gamma} & \text{if } \varphi_\gamma(\cdot) = |\cdot|_{2,\gamma} \\
h_n^2 \sum_{i,j=1}^{2^n}  |\nabla^h_{\text{FB}} u_{i,j}|_{2,1,\max_\gamma} & \text{if } \varphi_\gamma(\cdot) = \max\{|\cdot|_{2}, \gamma\}\\
h_n^2 \sum_{i,j=1}^{2^n}  \phi_{2,1,h}(\nabla^h_{\text{FB}} u_{i,j}) & \text{if } \varphi_\gamma(\cdot)= \phi_{2,h}(\cdot).
\end{cases}
\end{equation} 
Note that \eqref{Eq:IntegralCounterpart} with $\TV(u) = \int_\Omega \varphi_\gamma(\nabla u(x)) \dx$ is the continuous counterpart to \eqref{Eq:ProblemHw:constrained:discrete2} with 
\begin{equation*}
|\nabla^h u_{i,j}|_{\blacktriangleright} = \begin{cases}
|\nabla^h_{\text{FB}} u_{i,j}|_{2,1,\gamma} & \text{if } \varphi_\gamma(\cdot) = |\cdot|_{2,\gamma} \\
|\nabla^h_{\text{FB}} u_{i,j}|_{2,1,\max_\gamma} & \text{if } \varphi_\gamma(\cdot) = \max\{|\cdot|_{2}, \gamma\}\\
\phi_{2,1,h}(\nabla^h_{\text{FB}} u_{i,j}) & \text{if } \varphi_\gamma(\cdot)= \phi_{2,h}(\cdot).
\end{cases}
\end{equation*}

We state some elementary properties of $\gamma$-lifting and $\max$-$\gamma$-lifting together with their proofs.

\begin{lemma}\label{Lemma:LiftFunctional:Properties}
Let $\varphi_\gamma (\cdot) \in\left\{|\cdot|_{2,\gamma}, \max\{|\cdot|_{2},\gamma\} \right\}$, then we have the following properties:
\begin{enumerate}[(i)]
\item\label{Lemma:LiftFunctional:Property1} $0\le \gamma^- \le \gamma^+ \Rightarrow \forall v\in\R^2 \colon 0 \le \varphi_{\gamma^-}(v) \leq \varphi_{\gamma^+}(v)$,
\item\label{Lemma:LiftFunctional:Property2} $\forall v\in\R^2 \colon \lim_{\gamma \to 0^+} \varphi_{\gamma}(v) = \varphi_{0}(v) = |v|_2$,
\item\label{Lemma:LiftFunctional:Property3} $v\in L^1(\Omega)^d$ if and only if $\int_\Omega\varphi_{\gamma}(v(x)) \dx < \infty$,
\item\label{Lemma:LiftFunctional:Property4} $\lim_{\gamma \to 0^+} \int_\Omega \varphi_{\gamma}(v(x)) \dx = \int_\Omega |v(x)|_2\dx$ for all $v\in L^1(\Omega)^d$,
\item\label{Lemma:LiftFunctional:Property5} $0\le \gamma^- \le \gamma^+ \Rightarrow \forall v\in L^1(\Omega)^d \colon 0 \le \int_\Omega \varphi_{\gamma^-}(v(x))\dx \leq \int_\Omega \varphi_{\gamma^+}(v(x))\dx$.
\end{enumerate}
\end{lemma}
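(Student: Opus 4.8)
The plan is to reduce all five statements to a single two-sided pointwise estimate that holds uniformly for both choices of $\varphi_\gamma$. Namely, I would first establish that for every $v\in\R^2$ and every $\gamma\ge 0$,
\[
|v|_2 \le \varphi_\gamma(v) \le |v|_2 + c_\gamma,
\]
where $c_\gamma = \sqrt{\gamma}$ in the case $\varphi_\gamma(\cdot)=|\cdot|_{2,\gamma}$ and $c_\gamma = \gamma$ in the case $\varphi_\gamma(\cdot)=\max\{|\cdot|_2,\gamma\}$. For the $\gamma$-lifting the lower bound is immediate from $v_1^2+v_2^2 \le v_1^2+v_2^2+\gamma$ and monotonicity of the square root, while the upper bound uses the subadditivity $\sqrt{a+b}\le\sqrt{a}+\sqrt{b}$ with $a=v_1^2+v_2^2$ and $b=\gamma$. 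For the $\max$-$\gamma$-lifting both inequalities are trivial, since $|v|_2\le\max\{|v|_2,\gamma\}$ and $\max\{|v|_2,\gamma\}\le|v|_2+\gamma$ always hold. This estimate, together with $c_\gamma\to 0$ as $\gamma\to 0^+$, is the workhorse for properties (iii) and (iv).

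Properties (i) and (ii) I would check directly from the explicit formulas. For (i), in the $\gamma$-lifting case $\gamma\mapsto\sqrt{v_1^2+v_2^2+\gamma}$ is nondecreasing, and in the $\max$ case $\gamma\mapsto\max\{|v|_2,\gamma\}$ is nondecreasing; nonnegativity is clear in both cases. For (ii), continuity of the square root gives $\lim_{\gamma\to0^+}\sqrt{v_1^2+v_2^2+\gamma}=|v|_2$, and $\lim_{\gamma\to0^+}\max\{|v|_2,\gamma\}=\max\{|v|_2,0\}=|v|_2$ since $|v|_2\ge0$.

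For (iii), I would integrate the two-sided bound over $\Omega$ and use that $\Omega$ is bounded, hence $|\Omega|<\infty$, together with $\|v\|_{L^1(\Omega)}=\int_\Omega|v|_2\,\dx$. This yields
\[
\int_\Omega |v(x)|_2\,\dx \le \int_\Omega \varphi_\gamma(v(x))\,\dx \le \int_\Omega |v(x)|_2\,\dx + c_\gamma|\Omega|,
\]
so the middle integral is finite if and only if $v\in L^1(\Omega)^d$. Property (iv) then follows by a sandwich argument: the same chain of inequalities gives $0\le \int_\Omega\varphi_\gamma(v)\,\dx-\int_\Omega|v|_2\,\dx\le c_\gamma|\Omega|\to0$ as $\gamma\to0^+$. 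Finally, (v) is obtained by integrating the pointwise monotonicity from (i) over $\Omega$, with nonnegativity preserved under integration.

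The main subtlety---rather than a genuine obstacle---is establishing the upper bound of the two-sided estimate for the $\gamma$-lifting, which hinges on the subadditivity of the square root; everything else is elementary monotonicity, continuity, and the finiteness of $|\Omega|$. One alternative I would keep in reserve for (iv) is to invoke monotone convergence, since by (i) the integrands decrease to $|v|_2$ as $\gamma\downarrow0$, but the direct sandwich estimate is cleaner and sidesteps measurability bookkeeping.
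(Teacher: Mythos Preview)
Your proof is correct and follows essentially the same blueprint as the paper: both establish the two-sided pointwise estimate $|v|_2 \le \varphi_\gamma(v) \le |v|_2 + c_\gamma$ (with $c_\gamma=\sqrt{\gamma}$ or $c_\gamma=\gamma$), read off (i) and (ii) directly from the definitions, deduce (iii) by integrating the estimate over the bounded domain, and obtain (v) by integrating (i). The one genuine difference is in (iv): the paper invokes the Dominated Convergence Theorem, building a dominating function from the monotonicity in (i), whereas your sandwich argument $0\le \int_\Omega\varphi_\gamma(v)\,\dx - \int_\Omega|v|_2\,\dx \le c_\gamma|\Omega|\to 0$ is more direct and avoids the DCT machinery entirely. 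Your approach here is arguably cleaner, since the quantitative upper bound you already need for (iii) does all the work; the paper's DCT route is standard but slightly heavier for what is needed.
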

\begin{proof}
The statements in (i) and (ii) follow directly from the definition of $\varphi_\gamma$. (iii) is implied by the equivalency of vector norms in $\R^d$ and the fact that $\varphi_\gamma (v) \ge |v|_2$, $|v|_{2,\gamma} \le |v|_2 + \gamma^{1/2}$ and $\max\{|v|_2,\gamma\} \le |v|_2 + \gamma$ for any $v\in \R^2$. 

By \ref{Lemma:LiftFunctional:Property1} there is a function $g\ge\varphi_\gamma\geq 0$ for any sufficiently small $\gamma$, e.g.\ $g = \sup_{\gamma\in(0,\tilde{\gamma}]}\varphi_\gamma$ for $\tilde{\gamma}>0$ sufficiently small. Then by the Dominated Convergence Theorem we have  $\lim_{\gamma \to 0^+} \int_\Omega \varphi_{\gamma}(v(x)) \dx =  \int_\Omega \lim_{\gamma \to 0^+} \varphi_{\gamma}(v(x)) \dx$ and due to \ref{Lemma:LiftFunctional:Property2} the assertion (iv) follows.

(v) follows directly from \ref{Lemma:LiftFunctional:Property1}.
\end{proof}

This allows us to show the following $\Gamma$-convergence result.
\begin{theorem}[$\Gamma$-convergence] \label{Thm:GammaConvergence:lifted}
  Let $\varphi_\gamma (\cdot) \in\left\{|\cdot|_{2,\gamma}, \max\{|\cdot|_{2},\gamma\} , \phi_{2,\gamma}(\cdot)\right\}$, $(\gamma_m)_{m\in\N} > 0$ be
  monotonically decreasing sequences with $\lim_{m\to\infty} \gamma_m = 0$ and $A\in\{\mathcal{A}_{h}\cap \mcHc{c}{M},  \mcHc{c}{M} \}$, where $c\ge 0$, $M\in\N$, $h>0$, and $\mathcal{A}_{h}$ defined as in \eqref{Eq:P1}. Further we define the functionals $\mcE_m^c, \mcE^c: L^1(\Omega) \to \overline{\R}$ as 
\begin{equation}\label{Eq:Emc}
\mcE_{m}^c(u):= \begin{cases}
\alpha_1 \|T u - g \|_{L^1(\Omega)} + \alpha_2 \|T u - g \|_{L^2(\Omega)}^2 + \lambda \int_\Omega \varphi_{\gamma_m}(\nabla {u})\dx & \text{if }\ u \in A ,\\
+\infty & \text{otherwise},
\end{cases}
\end{equation}  
and
\begin{equation*}
\mcE^c(u):= \begin{cases}
\alpha_1 \|T u - g \|_{L^1(\Omega)} + \alpha_2 \|T u - g \|_{L^2(\Omega)}^2 + \lambda \int_\Omega |\nabla u|_2\dx & \text{if }\ u \in A ,\\
+\infty & \text{otherwise}.
\end{cases}
\end{equation*}  
Then $(\mcE_m^c)_m$ $\Gamma$-converges to $\mcE^c$ with respect to the $L^1(\Omega)$ topology.

If $S:=\{u\in A \colon \mcE^c(u)\leq \mcE_1^c(0)\}$ is bounded in $BV(\Omega)$, then we have that 
\begin{equation*}
\lim_{m\to\infty} \inf_{u\in L^1(\Omega)} \mcE_{m}^c(u) = \min_{u\in L^1(\Omega)} \mcE^c(u).
\end{equation*}
Additionally, let $u_m$ be a minimizer of $\mcE_m^c$, $m\in\N$. Then every limit of a subsequence of $(u_{m})_{m}$ is a minimizer of $\mcE^c$.
 \end{theorem}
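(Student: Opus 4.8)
The plan is to exploit that, unlike in \cref{Thm:GammaConvergence} and \cref{Thm:Gamma:Convergence:discrete}, the admissible set $A$ here is \emph{fixed}; the only thing varying is the parameter $\gamma_m \to 0$. I would write $\mcE_m^c = D + R_m$ and $\mcE^c = D + R$ on $A$ (both $+\infty$ off $A$), where $D(u) := \alpha_1\|Tu-g\|_{L^1(\Omega)} + \alpha_2\|Tu-g\|_{L^2(\Omega)}^2$ is common to both, $R_m(u) := \lambda\int_\Omega \varphi_{\gamma_m}(\nabla u)\dx$ and $R(u) := \lambda\int_\Omega |\nabla u|_2\dx$, so that the whole problem reduces to comparing the two regularizers. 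As a preliminary I would record that $\mcE^c$ is lower semicontinuous with respect to $L^1(\Omega)$: the total variation is $L^1$-lower semicontinuous \cite[Remark 3.5]{AmFuPa:00}, and $D$ is $L^1$-continuous on $A$, because every $u_\theta\in A\subseteq\mcHc{c}{M}$ is uniformly Lipschitz and uniformly bounded (fixed architecture, $|\theta|_\infty\le c$), so by Arzel\`a--Ascoli $A$ is compact in $C(\overline\Omega)$ and $L^1$-convergence inside $A$ upgrades to uniform convergence, whence $Tu_m\to Tu$ in $L^2(\Omega)$.

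For the $\limsup$-condition of \cref{Def:GammaConvergence} I would simply take the constant recovery sequence $u_m\equiv u$. For $u\in A$ this gives $\mcE_m^c(u)=D(u)+R_m(u)\to D(u)+R(u)=\mcE^c(u)$, using the pointwise convergence $R_m(u)\to R(u)$; the latter follows from \cref{Lemma:LiftFunctional:Properties} (property (iv)) for the two liftings and from the elementary gap $|v|_2-\tfrac{\gamma}{2}\le\phi_{2,\gamma}(v)\le|v|_2$ (giving $|R_m(u)-R(u)|\le\tfrac{\lambda}{2}\gamma_m|\Omega|$) for the Huber functional \cite[Proposition 3.3]{HiLaAl:23}; for $u\notin A$ both sides are $+\infty$. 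For the $\liminf$-condition, let $u_m\to u$ in $L^1(\Omega)$; I may assume $\liminf_m\mcE_m^c(u_m)<\infty$, so along a subsequence $u_m\in A$, and since $A$ is $L^1$-closed, $u\in A$. The key is a uniform lower bound $\varphi_\gamma(v)\ge|v|_2-\kappa_\gamma$ with $\kappa_\gamma\to0$: one has $\kappa_\gamma=0$ for the liftings (they dominate $|\cdot|_2$, \cref{Lemma:LiftFunctional:Properties} properties (i)--(ii)) and $\kappa_\gamma=\gamma/2$ for Huber. Hence $R_m(u_m)\ge\lambda\TV(u_m)-\lambda\kappa_{\gamma_m}|\Omega|$, and combining $D(u_m)\to D(u)$ with the $L^1$-lower semicontinuity of $\TV$ yields $\liminf_m\mcE_m^c(u_m)\ge D(u)+\lambda\TV(u)=\mcE^c(u)$.

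For the convergence of minima and minimizers I would invoke the fundamental theorem of $\Gamma$-convergence \cite[Theorem 1.21]{Braides:02}, for which the missing ingredient is an equicoercivity bound on the minimizers $u_m$ (existence of each $u_m$ being guaranteed by \cref{Prop:ExistenceMinimizer}, as $A\subseteq\mcHc{c}{M}$). Since $0\in A$ and $\gamma_m\le\gamma_1$, monotonicity (\cref{Lemma:LiftFunctional:Properties} property (v), and $\phi_{2,\gamma}(0)=0$ for Huber) gives $\mcE_m^c(u_m)\le\mcE_m^c(0)\le\mcE_1^c(0)$; using $R\le R_m$ for the liftings, respectively $R\le R_m+\tfrac{\lambda}{2}\gamma_m|\Omega|$ for Huber, every $u_m$ lies in $S$, up to the harmless additive constant $\tfrac{\lambda}{2}\gamma_1|\Omega|$ in the Huber case (which keeps $u_m$ inside a $BV$-bounded enlargement of $S$). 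As $S$ is bounded in $BV(\Omega)$ by hypothesis, it is relatively compact in $L^1(\Omega)$ \cite[Theorem 2.5]{AcarVogel:94}, so $(u_m)_m$ is precompact in $L^1(\Omega)$; together with the established $\Gamma$-convergence and lower semicontinuity, \cite[Theorem 1.21]{Braides:02} delivers $\lim_m\inf\mcE_m^c=\min\mcE^c$ and that every $L^1$-limit point of $(u_m)_m$ minimizes $\mcE^c$.

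The only genuinely delicate point is the Huber functional, where the pointwise inequality $\phi_{2,\gamma}\le|\cdot|_2$ runs opposite to the two liftings: there the $\liminf$-estimate and the containment of the minimizers in (a slight enlargement of) $S$ cannot rely on the monotone domination used for the liftings but must instead be carried by the uniform gap $0\le|v|_2-\phi_{2,\gamma}(v)\le\gamma/2$. Beyond this, the argument is essentially bookkeeping; I would only take care to verify that the uniform Lipschitz and uniform boundedness of $A$ indeed force $L^1$-convergence inside $A$ to be uniform, so that the shared data term $D$ genuinely passes to the limit.
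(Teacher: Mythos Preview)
Your proof is correct and takes a genuinely different route from the paper's. For the two liftings the paper does not verify the $\liminf$/$\limsup$ conditions directly: it observes that $\mcE_m^c$ is monotonically decreasing in $m$, converges pointwise to $\mcE^c$, and that $\mcE^c$ is $L^1$-lower semicontinuous, then invokes the general principle \cite[Remark~1.40(i)]{Braides:02} that a decreasing sequence pointwise converging to a lower semicontinuous limit $\Gamma$-converges to it. The Huber case, where the monotonicity runs the other way, is not argued in the paper at all but deferred to \cite[Section~3.3]{HiLaAl:23}. Your direct verification via the constant recovery sequence and the uniform gap $|v|_2-\kappa_\gamma\le\varphi_\gamma(v)$ handles all three regularizations at once and is more elementary and self-contained; the Arzel\`a--Ascoli argument you give for the continuity of $D$ on $A$ in fact supplies the justification for the $L^1$-lower semicontinuity of $\mcE^c$ that the paper merely asserts. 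The price is that you have to work a little harder on the $\liminf$ side, whereas the paper's monotone shortcut is a one-liner once the abstract lemma is quoted.

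One small point deserves to be made explicit. Your claim that in the Huber case the minimizers $u_m$ stay in a ``$BV$-bounded enlargement of $S$'' does not follow from the stated hypothesis on $S$ alone (boundedness of one sublevel set does not imply boundedness of a slightly larger one). It \emph{does} follow, however, from what you already established: since $A\subseteq\mcHc{c}{M}$ is uniformly Lipschitz and uniformly bounded, $A$ itself is bounded in $W^{1,\infty}(\Omega)\subset BV(\Omega)$, so every subset of $A$---in particular the enlarged level set---is $BV$-bounded. You should say this explicitly; once you do, the equicoercivity step goes through, and incidentally shows that the hypothesis on $S$ is automatically satisfied in this fixed-architecture setting.
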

 
 \begin{proof}
 For $\varphi_\gamma = \phi_{2,\gamma}$ the statement can be shown as in \cite[Section 3.3]{HiLaAl:23}. 
 
 The proof for $\varphi_\gamma (\cdot) \in\left\{|\cdot|_{2,\gamma}, \max\{|\cdot|_{2},\gamma\}\right\}$ runs as follows:
 By the monotonicity property of $\varphi_\gamma$ from \cref{Lemma:LiftFunctional:Properties} \ref{Lemma:LiftFunctional:Property5}, we observe that $\mcE_m^c({u}) \ge \mcE_{m+1}^c({u})$ and $\mcE_m^c({u}) \to \mcE^c({u})$ pointwise for every fixed ${u}\in L^1(\Omega)$. Further, $\mcE^c$ is lower semicontinuous with respect to the $L^1(\Omega)$ topology. 
 Then according to \cite[Remark 1.40 (i)]{Braides:02} we thus have that $(\mcE_m^c)_m$ $\Gamma$-converges to $\mcE^c$ with respect to the $L^1(\Omega)$ topology. 
    
   In order to prove the second statement we need to show that there exists a non-empty compact set $K\subset L^1(\Omega)$ such that $\inf_K \mcE_m^c = \inf_{L^1(\Omega)} \mcE_m^c$ for all $m\in \N$:   
  Let $S_m^a:=\{u\in A \colon \mcE_m^c(u)\leq a\}$, $a:=\mcE_1^c(0) < \infty$, be the lower level set of $\mcE_m^c$ for $m\in\N$. The monotonicity of the functional $\mcE_m^c$, i.e.\ $\mcE_m^c \ge \mcE_{m+1}^c$, yields $S_m^a \subseteq S_{m+1}^a \subseteq S$ for $m\in\N$, while  $a:=\mcE_1^c(0)$ ensures $S_m^a\neq \emptyset\neq S$ for all $m\in\N$.
   
   The existence of a minimizer of $\mcE^c$ is ensured by \cref{Prop:ExistenceMinimizer}, while one similarly shows the existence of a minimizer of $\mcE_m^c$, $m\in\N$. By the monotonicity $\mcE_m^c \geq \mcE_{m+1}^c \geq \mcE^c$ all minimizers of $\mcE_m^c$, $m\in\N$, and $\mcE^c$ are contained in $S$. Since $S$ is bounded in $BV(\Omega)$ it is relatively compact in $L^1(\Omega)$ \cite[Theorem 2.5]{AcarVogel:94} and we have $\inf_K \mcE_m^c = \inf_{L^1(\Omega)} \mcE_m^c$ for all $m\in\N$, where $K=\overline{S}$. An application of \cite[Theorem 1.21]{Braides:02} yields 
   \begin{equation*}
\lim_{m\to\infty} \inf_{u\in L^1(\Omega)} \mcE_{m}^c(u) = \min_{u\in L^1(\Omega)} \mcE^c(u).
\end{equation*}
Moreover, for $m\in\N$, let $u_{m}$ be a minimizer of $\mcE_m^c$ such that $$\lim_{m\to\infty} \mcE_{m}^c(u_{m}) = \lim_{m\to\infty} \inf_{u\in L^1(\Omega)} \mcE_{m}^c(u).$$ Then, since $(u_{m})_{m}\subseteq S$ and any subset of a relatively compact set is relatively compact, we have that every limit of a subsequence of $(u_{m})_{m}$ is a minimizer of $\mcE^c$ \cite[Theorem 1.21]{Braides:02}.
\end{proof}

\Cref{Thm:GammaConvergence:lifted} does not link \eqref{Eq:Emc} to \eqref{Eq:ProblemTV}. To obtain this connection we use the following result.
\begin{lemma}\label{Lem:GammaConvergence}
Let $V$ be a Banach space and $\mcE,\mcE_n,\mcE_{n,m}: V \to \overline{\R}$. If $(\mcE_n)_n$ $\Gamma$-converges to $\mcE$ and $(\mcE_{n,m})_m$ $\Gamma$-converges to $\mcE_n$ for all $n\in\N$, then $(\mcE_{n,m})_{n,m}$ $\Gamma$-converges to $\mcE$.
\end{lemma}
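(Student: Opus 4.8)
The plan is to read the conclusion in the standard (diagonal) sense: asserting that $(\mcE_{n,m})_{n,m}$ $\Gamma$-converges to $\mcE$ means that there is a nondecreasing map $n \mapsto m(n)$ with $m(n)\to\infty$ such that the diagonal sequence $(\mcE_{n,m(n)})_n$ $\Gamma$-converges to $\mcE$ in the sense of \cref{Def:GammaConvergence}. This is exactly the form needed when one wants to chain the discretization result (\cref{Thm:Gamma:Convergence:discrete}) with the regularization result (\cref{Thm:GammaConvergence:lifted}). So the goal is to manufacture such a diagonal out of the two given one-parameter $\Gamma$-convergences.

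First I would exploit the metrizability of $\Gamma$-convergence. On a separable metric space — and in all our applications $V=L^1(\Omega)$, which is separable — $\Gamma$-convergence of lower semicontinuous functionals $V\to\overline{\R}$ is induced by a metric $d_\Gamma$ \cite{DalMaso:12}; recall that $\Gamma$-limits are automatically lower semicontinuous and that the $\Gamma$-limit of a sequence is unchanged if each term is replaced by its lower semicontinuous envelope, so we may assume all functionals involved are lower semicontinuous and phrase the hypotheses directly through $d_\Gamma$. Under this metric, $\Glim_n \mcE_n = \mcE$ becomes $d_\Gamma(\mcE_n,\mcE)\to 0$, and the assumption that $\Glim_m \mcE_{n,m} = \mcE_n$ for every fixed $n$ becomes $\lim_{m\to\infty} d_\Gamma(\mcE_{n,m},\mcE_n)=0$ for each $n$.

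Next comes the diagonal extraction. For each $n\in\N$ I choose $m(n)$ so large that $d_\Gamma(\mcE_{n,m(n)},\mcE_n) \le 1/n$ and, by enlarging $m(n)$ if necessary, so that $n\mapsto m(n)$ is nondecreasing with $m(n)\to\infty$. The triangle inequality then gives
\[
d_\Gamma(\mcE_{n,m(n)},\mcE) \le d_\Gamma(\mcE_{n,m(n)},\mcE_n) + d_\Gamma(\mcE_n,\mcE) \le \tfrac{1}{n} + d_\Gamma(\mcE_n,\mcE) \xrightarrow{n\to\infty} 0,
\]
so that $\mcE_{n,m(n)}$ $\Gamma$-converges to $\mcE$, which is the assertion.

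I expect the main obstacle to be conceptual rather than computational. Since $\Gamma$-convergence quantifies over all recovering sequences, the two limits in $n$ and $m$ cannot simply be interchanged, and a naive pointwise diagonal choice need not recover the $\liminf$-inequality uniformly; metrizability is precisely what collapses the extraction into the one-line triangle-inequality argument above and bypasses this difficulty. If one wishes to avoid metrizability — for instance to treat a nonseparable Banach space $V$ — the same conclusion follows from Attouch's diagonalization lemma, applied to the doubly-indexed scalars coming from the recovery-sequence values in the $\limsup$-condition and, symmetrically, to the $\Gamma$-$\liminf$ quantities; the delicate point there is to select a single increasing $m(n)$ that simultaneously serves the recovery sequences constructed at the two levels.
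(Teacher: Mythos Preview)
Your argument is correct for separable $V$ --- which covers the paper's only application $V=L^1(\Omega)$ --- but differs from the paper's in both interpretation and method. The paper reads the conclusion as a joint double limit and argues directly from \cref{Def:GammaConvergence}: it rewrites the two one-parameter $\liminf$/$\limsup$ inequalities in $\varepsilon$--$\delta$ form (``for all $\delta>0$ there is $N$ such that for $n,m\ge N$ \dots''), adds them, and sends $\delta\to 0$; no metrizability or diagonal extraction is invoked. You instead interpret the conclusion as the existence of a diagonal $n\mapsto m(n)$ with $\mcE_{n,m(n)}\xrightarrow{\Gamma}\mcE$, and obtain it in one line from the Dal Maso metric $d_\Gamma$ plus the triangle inequality. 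Your route is cleaner and --- as you yourself note --- sidesteps the uniformity issue the paper's direct chaining glosses over (the threshold in $\mcE_n(u_n)\le\mcE_{n,m}(u_{n,m})+\delta/2$ depends on $n$, so a single $N$ valid for all $n,m\ge N$ is not automatic); the cost is the separability hypothesis needed for $d_\Gamma$ to exist, which the lemma as stated does not impose and which you rightly address via the Attouch diagonalization alternative. Your diagonal reading is also precisely the form that plugs into \cref{Thm:GammaConvergence:Phigamma} and \cite[Theorem~1.21]{Braides:02}.
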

\begin{proof}
We start by showing the $\liminf$-condition, i.e. for all $u\in V$ and for all $(u_{n,m})_{n,m}\subset V$ with $\lim_{n,m\to\infty}u_{n,m} = u$ we have 
$$
\mcE(u) \le \liminf_{n,m\to\infty} \mcE_{n,m}(u_{n,m}).
$$
Since $(\mcE_n)_n$ $\Gamma$-converges to $\mcE$ and $(\mcE_{n,m})_m$ $\Gamma$-converges to $\mcE_n$ for all $n\in\N$ we have that 
\begin{equation}\label{Eq:1:Lemma:GammaConvergence:Transitivity}
\mcE(u) \le \liminf_{n\to\infty} \mcE_{n}(u_{n}) \qquad \text{and} \qquad 
\mcE_n(u_n) \le \liminf_{m\to\infty} \mcE_{n,m}(u_{n,m})
\end{equation}
for all $u\in V$, $(u_n)_n\subset V$ and $(u_{n,m})_m\subset V$ with $\lim_{n\to \infty}u_n = u$ and $\lim_{m\to \infty}u_{n,m} = u_n$ for an arbitrary $n\in\N$.
The inequalities in \eqref{Eq:1:Lemma:GammaConvergence:Transitivity} can be equivalently written as: For all $\delta>0$ there is an $N\in\N$ such that for all $n,m\geq N$ we have $\mcE(u)\leq \mcE_n(u_n) + \delta/2$ and $\mcE_n(u_n)\leq \mcE_{n,m}(u_{n,m}) + \delta/2$. Combining the last two inequalities yields $\mcE(u)\leq \mcE_{n,m}(u_{n,m}) + \delta$. For $\delta\to 0$ we obtain
$$
\mcE(u) \le \liminf_{n,m\to\infty} \mcE_{n,m}(u_{n,m}).
$$ 
Let us turn to show the $\limsup$-condition, i.e.\ for all $u\in V$ there is a sequence $(u_{n,m})_{n,m}\subset V$ with $\lim_{n,m\to\infty}u_{n,m} = u$ such that 
$$
\mcE(u) \ge \limsup_{n,m\to\infty} \mcE_{n,m}(u_{n,m}).
$$
Again, since $\mcE_n$ $\Gamma$-converges to $\mcE$ and $\mcE_{n,m}$ $\Gamma$-converges to $\mcE_n$ we have that for all $u\in V$
\begin{equation}\label{Eq:2:Lemma:GammaConvergence:Transitivity}
\mcE(u) \ge \limsup_{n\to\infty} \mcE_n(u_{n}) \qquad \text{and} \qquad 
\mcE_n(u_n) \ge \limsup_{m\to\infty} \mcE_{n,m}(u_{n,m})
\end{equation}
for some $(u_n)_n\subset V$ and $(u_{n,m})_m\subset V$ with $\lim_{n\to \infty}u_n = u$ and $\lim_{m\to \infty}u_{n,m} = u_n$ for an arbitrary $n\in\N$. Instead of \eqref{Eq:2:Lemma:GammaConvergence:Transitivity} we write equivalently: For all $\delta>0$ there is an $N\in\N$ such that for all $n,m\geq N$ we have $\mcE(u)\geq \mcE_n(u_n) - \delta/2$ and $\mcE_n(u_n)\geq \mcE_{n,m}(u_{n,m}) - \delta/2$. Combining the last two inequalities yields $\mcE(u)\geq \mcE_{n,m}(u_{n,m}) - \delta$. For $\delta\to 0$ we obtain
$$
\mcE(u) \ge \limsup_{n,m\to\infty} \mcE_{n,m}(u_{n,m}).
$$ 
\end{proof}

Now we are ready to link \eqref{Eq:varphi:gamma} to the total variation in \eqref{Eq:ProblemTV} for functions in $BV(\Omega)\cap L^2(\Omega)$.

\begin{theorem}[$\Gamma$-convergence]\label{Thm:GammaConvergence:Phigamma}
Let $\varphi_\gamma (\cdot) \in\left\{|\cdot|_{2,\gamma}, \max\{|\cdot|_{2},\gamma\} , \phi_{2,\gamma}(\cdot)\right\}$, $(\gamma_m)_{m\in\N} > 0$ be
 a monotonically decreasing sequences with $\lim_{m\to\infty} \gamma_m = 0$ 
 and $\mathcal{A}_{h}$ defined as in \eqref{Eq:P1} with $h>0$.
Further let $\mcE$ as in \eqref{Eq:E} and define $\mcE_{n,m}: L^1(\Omega) \to \overline{\R}$ as
\begin{equation*}
\mcE_{n,m}(u):= \begin{cases}
\alpha_1 \|T u - g \|_{L^1(\Omega)} + \alpha_2 \|T u - g \|_{L^2(\Omega)}^2 + \lambda \int_\Omega \varphi_{\gamma_m}(\nabla {u})\dx & \text{if }\ u \in A_n ,\\
+\infty & \text{otherwise},
\end{cases}
\end{equation*}
where $A_n\in\{\mathcal{A}_{h_n}\cap \mcHc{c_n}{M_n},   \mcHc{c_n}{M_n} \}$.
Then there exist sequences $(c_n)_n \subset \R$ and $(M_n)_n\subset \N$ such that the sequence $(\mcE_{n,m})_{n,m}$ $\Gamma$-converge to $\mcE$ with respect to the $L^1(\Omega)$ topology.

If $\mcE$ is coercive on $BV(\Omega)$, then we have that 
\begin{equation*}
\lim_{n,m\to\infty} \inf_{u\in L^1(\Omega)} \mcE_{n,m}(u) = \min_{u\in L^1(\Omega)} \mcE(u).
\end{equation*}
Additionally, let $u_{n,m}$ be a minimizer of $\mcE_{n,m}$, $n,m\in\N$. Then every limit of a subsequence of $(u_{n,m})_{n,m}$ is a minimizer of $\mcE$.
\end{theorem}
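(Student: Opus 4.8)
The plan is to treat this as an iterated $\Gamma$-limit and invoke the combination \cref{Lem:GammaConvergence}. For each fixed $n$, the inner family $(\mcE_{n,m})_m$ is exactly the family analyzed in \cref{Thm:GammaConvergence:lifted} with the choice $A = A_n$ (that is, $c=c_n$, $M=M_n$, $h=h_n$). Hence \cref{Thm:GammaConvergence:lifted} gives, as $m\to\infty$, that $(\mcE_{n,m})_m$ $\Gamma$-converges to the functional $\mcE_n$ obtained by replacing the integrand $\varphi_{\gamma_m}(\nabla u)$ with the genuine $|\nabla u|_2$ and keeping the restriction to $A_n$. First I would record this identification and note that, since $A_n \subset W^{1,2}(\Omega)$, one has $\int_\Omega |\nabla u|_2\,\dx = \TV(u)$ on $A_n$, so that $\mcE_n$ coincides with the functional in \eqref{Eq:En} when $A_n = \mcHc{c_n}{M_n}$ and with the functional in \eqref{Eq:IntegralCounterpart} when $A_n = \mathcal{A}_{h_n}\cap\mcHc{c_n}{M_n}$. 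No coercivity is needed here, as the inner $\Gamma$-convergence rests only on the monotonicity and lower semicontinuity invoked in \cref{Thm:GammaConvergence:lifted}.

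Next, for the outer limit I would apply either \cref{Thm:GammaConvergence} (in the first case) or \cref{Thm:Gamma:Convergence:discrete} (in the second case) to obtain sequences $(c_n)_n$ and $(M_n)_n$ for which $(\mcE_n)_n$ $\Gamma$-converges to $\mcE$ in the $L^1(\Omega)$ topology. With both hypotheses of \cref{Lem:GammaConvergence} in hand, namely $(\mcE_{n,m})_m \to \mcE_n$ for every $n$ and $(\mcE_n)_n \to \mcE$, the lemma directly yields the $\Gamma$-convergence of the double-indexed family $(\mcE_{n,m})_{n,m}$ to $\mcE$, which is the first assertion.

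For the convergence of the infima and the characterization of cluster points of minimizers I would establish equi-coercivity by exhibiting a single $L^1(\Omega)$-relatively compact set containing every minimizer $u_{n,m}$. Since $0\in A_n$ for all $n$, any minimizer obeys $\mcE_{n,m}(u_{n,m}) \le \mcE_{n,m}(0)$, and the right-hand side is bounded uniformly in $n,m$ by $\alpha_1\|g\|_{L^1(\Omega)} + \alpha_2\|g\|_{L^2(\Omega)}^2 + \lambda\,|\Omega|\,\varphi_{\gamma_1}(0)$, using $\gamma_m\le\gamma_1$ and $\varphi_{\gamma_1}(0)\in\{\sqrt{\gamma_1},\gamma_1,0\}$. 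I would then relate $\mcE_{n,m}(u_{n,m})$ back to $\mcE(u_{n,m})$: for the two liftings $\varphi_{\gamma_m}\ge|\cdot|_2$ gives $\mcE(u_{n,m})\le\mcE_{n,m}(u_{n,m})$ at once, while for the Huber case the elementary bound $\phi_{2,\gamma}(v)\ge|v|_2-\tfrac{\gamma}{2}$ (which follows from $(|v|_2-\gamma)^2\ge 0$ when $|v|_2\le\gamma$) yields $\mcE(u_{n,m})\le\mcE_{n,m}(u_{n,m})+\tfrac{\lambda\gamma_1}{2}|\Omega|$. In all three cases every $u_{n,m}$ lies in a fixed sublevel set of $\mcE$; coercivity of $\mcE$ on $BV(\Omega)$ renders this set bounded in $BV(\Omega)$, hence relatively compact in $L^1(\Omega)$ by \cite[Theorem 2.5]{AcarVogel:94}. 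Taking $K$ to be its closure gives $\inf_K \mcE_{n,m} = \inf_{L^1(\Omega)} \mcE_{n,m}$ for all $n,m$, and an application of the fundamental theorem of $\Gamma$-convergence \cite[Theorem 1.21]{Braides:02}, argued for the double index as in the proofs of \cref{Thm:GammaConvergence,Thm:Gamma:Convergence:discrete}, delivers both the convergence of the infima and that every cluster point of $(u_{n,m})_{n,m}$ minimizes $\mcE$.

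The two applications of the already-proved $\Gamma$-convergence theorems and of \cref{Lem:GammaConvergence} are routine. The main obstacle I anticipate is the equi-coercivity step: one must produce a compact set uniform in \emph{both} indices simultaneously, and the argument differs for the Huber regularization, which approximates $|\cdot|_2$ from below and so forces the $-\tfrac{\gamma}{2}$ correction term, than for the two liftings, which approximate from above. Care is also needed to confirm that $0$ belongs to every admissible set $A_n$ and that the bound on $\mcE_{n,m}(0)$ is genuinely independent of $n$ and $m$, so that a single sublevel set of the coercive limit $\mcE$ captures all the minimizers.
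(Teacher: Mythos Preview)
Your proposal is correct and follows the same overall strategy as the paper: the $\Gamma$-convergence is obtained by composing \cref{Thm:GammaConvergence:lifted} (inner limit in $m$) with \cref{Thm:GammaConvergence} or \cref{Thm:Gamma:Convergence:discrete} (outer limit in $n$) via \cref{Lem:GammaConvergence}, and the convergence of minimizers is deduced from equi-coercivity together with \cite[Theorem 1.21]{Braides:02}.

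The only noteworthy difference lies in the equi-coercivity step. The paper splits into two cases according to whether $\varphi_\gamma$ approximates $|\cdot|_2$ from above (liftings) or from below (Huber), and in each case uses the resulting monotone nesting of sublevel sets $S_{n,m}^a$ of the functionals $\mcE_{n,m}$ themselves, with a case-dependent choice of the level $a$. You instead argue uniformly: you bound $\mcE_{n,m}(0)$ independently of $n,m$, and then compare $\mcE(u_{n,m})$ directly to $\mcE_{n,m}(u_{n,m})$ via the pointwise inequality $\phi_{2,\gamma}(v)\ge |v|_2-\tfrac{\gamma}{2}$ in the Huber case (and the trivial $\varphi_\gamma\ge|\cdot|_2$ for the liftings). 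This places all minimizers in a single sublevel set of the coercive limit functional $\mcE$. Your route is slightly more explicit and handles the Huber case more transparently; the paper's route is more structural but leaves the Huber branch to ``follows the lines of (a)''. Both reach the same conclusion with comparable effort.
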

\begin{proof}
We define
\begin{equation*}
\mcE_n(u):= \begin{cases}
\alpha_1 \|T u - g \|_{L^1(\Omega)} + \alpha_2 \|T u - g \|_{L^2(\Omega)}^2 + \lambda \int_\Omega |\nabla u|_2\dx & \text{if }\ u \in A_n ,\\
+\infty & \text{otherwise}
\end{cases}
\end{equation*}
and apply \cref{Lem:GammaConvergence} with $V=L^1(\Omega)$. More precisely, by \cref{Thm:GammaConvergence:lifted} we have that $(\mcE_{n,m})_m$ $\Gamma$-converges to $\mcE_n$ for any $n\in\N$ and by \cref{Thm:GammaConvergence} and \cref{Thm:Gamma:Convergence:discrete} we have $(\mcE_n)_n$ $\Gamma$-converges to $\mcE$. \Cref{Lem:GammaConvergence} yields then that $(\mcE_{n,m})_{n,m}$ $\Gamma$-converge to $\mcE$ with respect to the $L^1(\Omega)$ topology.

For showing the second part of the theorem we define $S_{n,m}^a:=\{u\in BV(\Omega) \colon \mcE_{n,m}(u)\leq a\}$ for all $n,m\in\N$ and separate the proof between the lifting-regularization and the Huber-regularization. The reason is that the lifting-regularization approaches the limit from above while the Huber-regularization does so from below, which leads to different, although similar, arguments.
\begin{enumerate}[(a)]
\item Let $\varphi_\gamma (\cdot) \in\left\{|\cdot|_{2,\gamma}, \max\{|\cdot|_{2},\gamma\}\right\}$. By the monotonicity of $\varphi_\gamma$, see \cref{Lem:GammaConvergence} \ref{Lemma:LiftFunctional:Property1} and \ref{Lemma:LiftFunctional:Property5}, we observe that $\mcE_{n,m}\ge \mcE_{n,m+1}\ge \mcE_n \ge \mcE$ for all $n,m\in\N$ and $S_{1,1}^a \subseteq S_{n,m}^a \subseteq S_{n,m+1}^a \subseteq S_{n}^a \subseteq S^a$. We choose $a:=\mcE_{1,1}(0)$ which ensures that $S_{1,1}^a \neq \emptyset$. The coercivity of $\mcE$ implies that $S^a$ is relatively compact in $L^1(\Omega)$ \cite[Theorem 2.5]{AcarVogel:94}. By this and $\inf_{L^1(\Omega)}\mcE_{n,m} = \inf_{\overline{S^a}}\mcE_{n,m}$ for all $n,m\in\N$, as the minimizers of $\mcE_{n,m}$ are in $\overline{S^a}$, it follows that 
\begin{equation*}
\lim_{n,m\to\infty} \mcE_{n,m}(u_{n,m}) := \lim_{n,m\to\infty} \inf_{u\in L^1(\Omega)} \mcE_{n,m}(u) = \min_{u\in L^1(\Omega)} \mcE(u)
\end{equation*}
\cite[Theorem 1.21]{Braides:02}. Moreover, since $(u_{n,m})_{n,m}\subseteq S^a$ and any subset of a relatively compact set is relatively compact, we have that every limit of a subsequence of $(u_{n,m})_{n,m}$ is a minimizer of $\mcE$ \cite[Theorem 1.21]{Braides:02}.

\item For $\varphi_\gamma = \phi_{2,\gamma}$ we note that $(\phi_{2,\gamma_m})_m$ is monotonically increasing and hence $\mcE_{n,m}\le \mcE_{n,m+1}\le \mcE_n \le \mcE$ for all $n,m\in\N$ and $S_{1,1}^a \supseteq S_{n,m}^a \supseteq S_{n,m+1}^a \supseteq S_{n}^a \supseteq S^a$. We choose $a:=\mcE(0)$ which ensures that $S^a \neq \emptyset$. The rest of the proof follows the lines of (a).
\end{enumerate}
\end{proof}

\Cref{Thm:GammaConvergence:Phigamma} shows that minimizers of the discrete problem 
\begin{equation}\label{Eq:ProblemHw:constrained:discrete:phi}
\begin{split}
\min_{u_\theta \in \mcHc{c}{M}} \frac{\alpha_1}{N} \sum_{x\in\Omega^h} |(T^h u_\theta(x) - g(x) | &+ \frac{\alpha_2}{N} \sum_{x\in\Omega^h} |(T^h u_\theta(x) - g(x) |^2  \\
&+ \frac{\lambda}{N} \sum_{x\in\Omega^h} \widetilde{\varphi}_\gamma(\nabla_{\text{FB}}^h u(x))
\end{split}
\end{equation}
with $\widetilde{\varphi}_\gamma \in\left\{|\cdot|_{2,1,\gamma}, \max\{|\cdot|_{2,1},\gamma\},\phi_{2,1,\gamma}(\cdot) \right\}$ 
converge to a minimizer of \eqref{Eq:ProblemTV}, while the functional in \eqref{Eq:ProblemHw:constrained:discrete:phi} $\Gamma$-converges to the functional in \eqref{Eq:ProblemTV}. 

\subsection{Connection to finite difference discretisation}

The results in \cref{Sec:Gammaconvergence:discrete} already indicate that there is a strong connection between \eqref{Eq:ProblemHw:constrained:discrete2} and the respective finite difference discretization of \eqref{Eq:ProblemTV}, namely
\begin{equation}\label{Eq:Problem:FDM}
\begin{split}
\min_{u_{\text{FD}} \in \R^{N}} \Big\{E_{\text{FD}}(u_{\text{FD}}) := \alpha_1\sum_{x\in\Omega^h}w_{1}^x |T^h u_{\text{FD}}(x) - g_{\text{FD}}(x) | &+ \alpha_2 \sum_{x\in\Omega^h}w_{2}^x|T^h u_{\text{FD}}(x) - g_{\text{FD}}(x) |^2  \\
&+ \lambda \sum_{x\in\Omega^h} w_{\TV}^x |\nabla^h u_{\text{FD}}(x)|_{\blacktriangleright}\Big\}, 
\end{split}
\end{equation}
where $g_{\text{FD}}\in\R^{N}$ is a finite difference discretisation of $g$ such that $g_{\text{FD}}(x) = g(x)$ for all $x\in\Omega^h$. In particular, if $\mathcal{A}_{h_n} \cap \mcHc{c_n}{M_n} = \mathcal{A}_{h_n}$ in \eqref{Eq:IntegralCounterpart}, then \eqref{Eq:IntegralCounterpart} is also the integral counterpart of \eqref{Eq:Problem:FDM}. Note that here as in \eqref{Eq:ProblemHw:constrained:discrete2} $\nabla^h$ and $|\cdot|_{\blacktriangleright}$ might be any suitable discrete gradient and discrete norm. For example, one may choose $\nabla_{\text{F}}^h$ and $|\cdot|_2$ or $\nabla_{\text{FB}}^h$ and $|\cdot|_{2,1}$.
Note that if $u_{\text{FD}}(x) = u_\theta(x)$ for all $x\in\Omega^h$, then $E_\theta(u_\theta) = E_{\text{FD}}(u_{\text{FD}})$. This observation leads to the following result:

\begin{proposition}\label{Prop:Discrete}
Given the set of points $\{x_i\}_{i=1}^{N}=\Omega^h \subset \Omega$. If $u_{\FD}\in\R^N$ is a solution of \eqref{Eq:Problem:FDM}, then there exists a constant $c\geq 0$ and an $M\in\N$ such that $u_\theta\in \mcHc{c}{M}$ is a solution of \eqref{Eq:ProblemHw:constrained:discrete2}, whereby  ${u}_{\text{FD}} (x_i) = u_\theta(x_i)$ for $i=1,\ldots,N$.
\end{proposition}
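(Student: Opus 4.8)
The plan is to turn a minimizer $u_{\FD}$ of the finite-difference problem \eqref{Eq:Problem:FDM} into a ReLU-NN by interpolation, and then to transfer minimality between the two problems by exploiting that both objectives are literally the same function of the grid values. The crucial structural fact, already noted just before the statement, is that $E_\theta(u_\theta)$ and $E_{\text{FD}}(u_{\FD})$ depend on their arguments only through the values on the finite set $\Omega^h$: the data terms evaluate $T^h$ and $g$ at the points of $\Omega^h$, and the discrete total variation uses $\nabla^h$, which is a finite-difference operator built solely from grid values. Hence whenever $u_\theta(x_i) = u_{\FD}(x_i)$ for all $i$, we have $E_\theta(u_\theta) = E_{\text{FD}}(u_{\FD})$.

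First I would construct the interpolant. Using the triangulation $\mathcal{T}_h$ underlying $\Omega^h$ (intervals for $d=1$, triangles for $d=2$), let $u^\ast$ be the continuous piecewise affine function in $\mathcal{A}_h$, see \eqref{Eq:P1}, that satisfies $u^\ast(x_i) = u_{\FD}(x_i)$ at every grid node $x_i$ (extending to the remaining boundary nodes according to the same boundary convention incorporated into $\nabla^h$). By \cite[Theorem 2.1]{ArBaMiMu:16}, $u^\ast$ can be represented by a ReLU-NN with finitely many weights and biases; collecting these in $\theta^\ast$, letting $M$ be the number of these parameters and setting $c := |\theta^\ast|_\infty$, we obtain $u_{\theta^\ast} = u^\ast \in \mcHc{c}{M}$. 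Because $u_{\theta^\ast}(x_i) = u_{\FD}(x_i)$ for all $i$, the structural fact above gives $E_\theta(u_{\theta^\ast}) = E_{\text{FD}}(u_{\FD})$.

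It remains to show that $u_{\theta^\ast}$ actually minimizes \eqref{Eq:ProblemHw:constrained:discrete2} over $\mcHc{c}{M}$. For this I would use the restriction-to-grid map: any competitor $u_{\theta'} \in \mcHc{c}{M}$ produces a vector $v := (u_{\theta'}(x_i))_{i=1}^N \in \R^N$, and by the same identity $E_\theta(u_{\theta'}) = E_{\text{FD}}(v)$. Since $u_{\FD}$ minimizes $E_{\text{FD}}$ over all of $\R^N$, we have $E_{\text{FD}}(v) \ge E_{\text{FD}}(u_{\FD}) = E_\theta(u_{\theta^\ast})$, so $E_\theta(u_{\theta'}) \ge E_\theta(u_{\theta^\ast})$. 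As $u_{\theta'}$ was arbitrary in $\mcHc{c}{M}$, the network $u_{\theta^\ast}$ solves \eqref{Eq:ProblemHw:constrained:discrete2} with the asserted matching of grid values.

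The one genuinely delicate point, rather than the minimality argument (which is automatic once the objectives agree on grid values), is guaranteeing a finite admissible bound $c$: we must know that the prescribed grid values can be realized by a ReLU-NN whose weights stay bounded, so that some finite constraint set $\mcHc{c}{M}$ contains the interpolant. This is exactly what the exact representation result \cite[Theorem 2.1]{ArBaMiMu:16} for continuous piecewise affine functions supplies. Note that we need \emph{no} surjectivity of the grid-restriction map on the constrained class, since any admissible competitor already yields a feasible point of the unconstrained $\R^N$-problem and therefore cannot undercut the value $E_{\text{FD}}(u_{\FD})$ that $u_{\theta^\ast}$ attains.
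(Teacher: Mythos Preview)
Your argument is correct and follows essentially the same approach as the paper: interpolate $u_{\FD}$ by a continuous piecewise affine function, invoke \cite[Theorem 2.1]{ArBaMiMu:16} to realize it as a ReLU-NN with some finite $(c,M)$, and then transfer minimality by restricting competitors to the grid. The only cosmetic difference is that the paper phrases the minimality step by contradiction while you argue directly, but the substance is identical.
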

\begin{proof}
We construct a mesh $\mathcal{T}$ of simplices with vertices $\{x_i\}_{i=1}^{N}$ and let $u_{\FD}$ be a solution of \eqref{Eq:Problem:FDM}. Then there exists a $c\geq 0$ and an $M\in\N$ such that $u_\theta\in \mcHc{c}{M}$ is its continuous piecewise affine interpolation on $\mathcal{T}$, i.e.\ $u_{\text{FD}}(x_i) = u_\theta(x_i)$ for $i=1,\ldots,N$. Hence we have $E_\theta (u_\theta) = E_{\FD}(u_{\FD}) $.\\
Assume that $u_{\theta}$ is not a minimizer of $E_{\theta}$, i.e.\ there is a $\tilde{u}_{\theta}\in \mcHc{c}{M}$ with $\tilde{u}_{\theta}\neq {u}_{\theta}$ such that $E_{\theta}(\tilde{u}_{\theta}) < E_{\theta}(u_{\theta})$. We define $\tilde{u}_{\FD}\in\R^N$ as $\tilde{u}_{\FD}(x_i) = \tilde{u}_{\theta}(x_i)$ for $i=1,\ldots,N$. Hence we obtain $E_{\FD}(\tilde{u}_{\FD})  = E_\theta (\tilde{u}_\theta)<E_\theta (u_\theta) = E_{\FD}(u_{\FD})$, which is a contradiction showing that $u_{\theta}$  is indeed a minimizer of $E_{\theta}$.
\end{proof}

Note that for any $c\ge 0$ and $M\in\N$ \eqref{Eq:ProblemHw:constrained:discrete2} has a solution, see \cref{Prop:ExistenceMinimizer}, while this is not the case for \eqref{Eq:Problem:FDM}. This is why the other direction in \cref{Prop:Discrete} does not hold in general. Moreover, although \cite[Theorem 2.1]{ArBaMiMu:16} states that any continuous piecewise affine function can be represented by a ReLU-NN, it seems not possible to preset a $c\geq 0$ and $M\in\N$ such that $\mcHc{c}{M}$ contains any continuous piecewise affine functions, not even on a given grid.
However, if \eqref{Eq:Problem:FDM} has a solution then there is a $c\ge 0$ and an $M\in\N$, such that the sets of solutions of \eqref{Eq:Problem:FDM} and \eqref{Eq:ProblemHw:constrained:discrete2}, respectively, are equivalent in the sense of \cref{Prop:Discrete}.

If $c=\infty$, i.e.\ the constraint $|\theta|_\infty < c$ in \eqref{Eq:ProblemHw:constrained:discrete2} is dropped, and $M=\infty$, i.e. the number of neurons in the hidden layers can be arbitrary large and unbounded, then we even have the following statement.
\begin{proposition}\label{Prop:Discrete2}
Given the set of points $\{x_i\}_{i=1}^{N}=\Omega^h \subset \Omega$ and let $c=\infty$ and $M=\infty$ in \eqref{Eq:ProblemHw:constrained:discrete2}, then $u_{\FD}\in\R^N$ is a solution of \eqref{Eq:Problem:FDM}, if and only if $u_\theta\in \mcH$ is a solution of \eqref{Eq:ProblemHw:constrained:discrete2}, whereby  ${u}_{\text{FD}} (x_i) = u_\theta(x_i)$ for $i=1,\ldots,N$.
\end{proposition}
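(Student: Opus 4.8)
The plan is to build everything on the energy identity already highlighted before \cref{Prop:Discrete}: whenever $u_{\FD}(x_i) = u_\theta(x_i)$ for $i=1,\ldots,N$, one has $E_\theta(u_\theta) = E_{\FD}(u_{\FD})$. The reason is structural rather than geometric: the operators $T^h$ and $\nabla^h$ appearing in \eqref{Eq:ProblemHw:constrained:discrete2} are finite-difference operators that act only on the grid values of their argument, so $E_\theta$ evaluated at a network $u_\theta\in\mcH$ depends on $u_\theta$ solely through the vector $(u_\theta(x_1),\ldots,u_\theta(x_N))$, and this dependence is exactly the rule defining $E_{\FD}$ on $\R^N$. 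The decisive difference to \cref{Prop:Discrete} is that here $c=\infty$ and $M=\infty$, so $\mcH$ is the full class of ReLU-NNs with unbounded weights and an unbounded number of neurons; by \cite[Theorem 2.1]{ArBaMiMu:16} this class represents \emph{every} continuous piecewise affine function, in particular the piecewise affine interpolant on a fixed mesh $\mathcal{T}$ of simplices with vertices $\{x_i\}_{i=1}^N$ of an arbitrary vector in $\R^N$. This mesh serves only as a vehicle to realize grid-value vectors as members of $\mcH$; the finite-difference stencils underlying $E_\theta$ and $E_{\FD}$ are fixed by $\Omega^h$ and the chosen $\nabla^h$, independently of $\mathcal{T}$.

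For the forward implication I would argue exactly as in \cref{Prop:Discrete}: let $u_{\FD}$ solve \eqref{Eq:Problem:FDM} and let $u_\theta$ be its continuous piecewise affine interpolation on $\mathcal{T}$, which now lies in $\mcH$ without any need to preset a finite $c$ or $M$. The energy identity gives $E_\theta(u_\theta) = E_{\FD}(u_{\FD})$, and the usual contradiction argument — taking any strictly better network competitor, reading off its grid values, and contradicting the optimality of $u_{\FD}$ — shows that $u_\theta$ minimizes \eqref{Eq:ProblemHw:constrained:discrete2}.

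The genuinely new content is the reverse implication, which is precisely the direction that fails for finite $c,M$. Here I would start from a solution $u_\theta\in\mcH$ of \eqref{Eq:ProblemHw:constrained:discrete2}, set $u_{\FD}(x_i) := u_\theta(x_i)$, and assume toward a contradiction that $u_{\FD}$ is not a minimizer of \eqref{Eq:Problem:FDM}; then there is $\tilde u_{\FD}\in\R^N$ with $E_{\FD}(\tilde u_{\FD}) < E_{\FD}(u_{\FD})$. The step where $c=M=\infty$ is essential is the next one: take $\tilde u_\theta\in\mcH$ to be the continuous piecewise affine interpolant of $\tilde u_{\FD}$ on $\mathcal{T}$, which is guaranteed to lie in $\mcH$ by \cite[Theorem 2.1]{ArBaMiMu:16}. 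The energy identity then yields $E_\theta(\tilde u_\theta) = E_{\FD}(\tilde u_{\FD}) < E_{\FD}(u_{\FD}) = E_\theta(u_\theta)$, contradicting the minimality of $u_\theta$; hence $u_{\FD}$ solves \eqref{Eq:Problem:FDM}.

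The hard part — and the reason the statement requires $c=M=\infty$ rather than finite bounds — is ensuring that the competitor interpolant $\tilde u_\theta$ actually belongs to the admissible network class. As noted after \cref{Prop:Discrete}, one cannot in general preset finite $c$ and $M$ so that $\mcHc{c}{M}$ contains all continuous piecewise affine functions on a given grid, which is exactly what breaks the reverse direction there. Allowing unbounded weights and an unbounded number of neurons removes this restriction and upgrades the one-sided correspondence of \cref{Prop:Discrete} to the full equivalence claimed here. In writing the proof I would state explicitly that $\mathcal{T}$ is fixed once and for all, independent of the particular competitor, so that the same finite-difference rule defines $E_\theta$ and $E_{\FD}$ and the energy identity holds verbatim for every vector in $\R^N$ together with its interpolant. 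Note also that, since the assertion is an equivalence, no separate existence claim is needed: if neither \eqref{Eq:Problem:FDM} nor \eqref{Eq:ProblemHw:constrained:discrete2} admits a minimizer the statement is vacuously consistent, and when one side has a solution the construction produces one on the other.
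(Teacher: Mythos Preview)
Your proposal is correct and follows essentially the same approach as the paper: fix a simplicial mesh $\mathcal{T}$ on the grid points, use the energy identity $E_\theta(u_\theta)=E_{\FD}(u_{\FD})$ whenever the grid values agree, and run the contradiction argument in both directions, with the reverse implication relying on \cite[Theorem 2.1]{ArBaMiMu:16} to place the piecewise affine interpolant of an arbitrary competitor $\tilde u_{\FD}$ in $\mcH$ once $c=M=\infty$. The paper's proof is slightly terser but otherwise identical in structure and in its use of ingredients.
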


\begin{proof}
Let $\mathcal{T}$ be a mesh of simplices with vertices $\{x_i\}_{i=1}^{N}$.

``$\Rightarrow$'': This direction is shown as \cref{Prop:Discrete} by noting that $c=\infty=M$ here and any continuous piecewise affine interpolation is in $\mcH$.

``$\Leftarrow$'': Let $u_\theta$ be a solution of \eqref{Eq:ProblemHw:constrained:discrete2} with $c=\infty$ and $M=\infty$. We define $u_{\text{FD}}\in\R^N$ such that 
$u_{\text{FD}}(x_i) = u_\theta(x_i)$ for $i=1,\ldots,N$. This implies that $E_\theta (u_\theta) = E_{\FD}(u_{\FD}) $.\\
Assume that $u_{\FD}$ is not a minimizer of $E_{\FD}$, i.e.\ there is a $\tilde{u}_{\FD}\in\R^N$ with $\tilde{u}_{\FD}\neq {u}_{\FD}$ such that $E_{\FD}(\tilde{u}_{\FD}) < E_{\FD}(u_{\FD})$. We construct a continuous piecewise affine interpolation $\tilde{u}_\theta$ of $\tilde{u}_{\FD}$ on $\mathcal{T}$ such that $\tilde{u}_{\text{FD}} (x_i) = \tilde{u}_\theta(x_i)$ for $i=1,\ldots,N$. Since $\mcH$ includes any continuous piecewise affine function on $\mathcal{T}$, we have $\tilde{u}_\theta \in \mcH$. Consequently $E_\theta (\tilde{u}_\theta) = E_{\FD}(\tilde{u}_{\FD}) < E_{\FD}(u_{\FD}) = E_\theta (u_\theta)$, which is a contradiction to $u_\theta$ being a solution of \eqref{Eq:ProblemHw:constrained:discrete2} with $c=\infty=M$ and hence $u_{\FD}$ is indeed a minimizer of $E_{\FD}$.
\end{proof}
A direct consequence of \cref{Prop:Discrete2} is that \eqref{Eq:Problem:FDM} does not have a solution if and only if \eqref{Eq:ProblemHw:constrained:discrete2} with $c,M=\infty$ does not have a solution.

We emphasize that \cref{Prop:Discrete,Prop:Discrete2} also holds if $|\cdot|_{\blacktriangleright}$ is chosen to be any of the approximations presented in \cref{Sec:NumericalIssue}, i.e.\ Huber-regularization, $\gamma$-lifting and $\max$-$\gamma$-lifting, or any arbitrary discrete norm, e.g.\ $|\cdot|_r$, $r\in\N$. Actually, the proof of \cref{Prop:Discrete,Prop:Discrete2} does not rely on the type of discretization but only on the fact that if $u_{\text{FD}}(x) = u_\theta(x)$ for all $x\in\Omega^h$, then $E_\theta(u_\theta) = E_{\text{FD}}(u_{\text{FD}})$.

\cref{Prop:Discrete2} can be generalized in the following way.
\begin{theorem}\label{Thm:Equivalence}
Given the set of points $\{x_i\}_{i=1}^{N}=\Omega^h \subset \Omega$ we define
$$
\wmcH:=\{u: \Omega \to \R \colon \forall x,y\in \Omega^h,  x\neq y, u(x) \text{ and } u(y) \text{ are independent} \},
$$
e.g.\ $\wmcH$ could be the set of ReLU-NNs.
Then we have that 
$$
u_{\FD}^* \in \argmin_{u_{FD}\in \R^N} \mcE_{\FD} (u_{\FD}) \Longleftrightarrow u_{\theta}^*\in \argmin_{u_{\theta}\in \wmcH} \mcE_{\theta}(u_\theta)
$$
with ${u}_{\text{FD}}^* (x) = u^*_{\theta}(x)$ for all $x\in\Omega^h$, 
where $\mcE_{\FD} (u_{\FD}):= \sum_{x\in\Omega^h} \mcG(u_{\FD}(x))$, $\mcE_{\theta} (u_{\theta}):= \sum_{x\in\Omega^h} \mcG(u_{\theta}(x))$ and $\mcG: \R\to\overline{\R}$.
\end{theorem}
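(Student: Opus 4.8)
The plan is to exploit that both functionals depend on their argument only through its values at the finitely many points of $\Omega^h$, and that the defining ``independence'' property of $\wmcH$ is precisely the statement that the evaluation map $\Phi:\wmcH\to\R^N$, $\Phi(u):=(u(x_1),\dots,u(x_N))$, is surjective. Under this reading both minimization problems reduce to the same finite-dimensional problem of minimizing $t\mapsto\sum_{i=1}^N\mcG(t_i)$ over $\R^N$, and the asserted correspondence $u_{\FD}^*(x)=u_\theta^*(x)$ on $\Omega^h$ is just the identification of a vector in $\R^N$ with a function in $\wmcH$ realizing it.

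First I would record the two elementary facts on which everything rests: (a) for every $u_{\FD}\in\R^N$ and every $u_\theta\in\wmcH$ with $u_\theta(x_i)=u_{\FD}(x_i)$ one has $\mcE_{\FD}(u_{\FD})=\mcE_{\theta}(u_\theta)$, directly from the defining sums; and (b) by the independence property, for any prescribed values $(t_1,\dots,t_N)\in\R^N$ there is some $u\in\wmcH$ with $u(x_i)=t_i$ for all $i$. Fact (b) is exactly where the hypothesis on $\wmcH$ enters; for the example $\wmcH$ of ReLU-NNs it is furnished by the representation of continuous piecewise affine interpolants \cite[Theorem 2.1]{ArBaMiMu:16} on a mesh with vertices $\{x_i\}_{i=1}^N$, as already used in the proof of \cref{Prop:Discrete2}.

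For the forward implication I would take a minimizer $u_{\FD}^*$ of $\mcE_{\FD}$, use (b) to pick $u_\theta^*\in\wmcH$ agreeing with $u_{\FD}^*$ on $\Omega^h$, and then for an arbitrary competitor $\tilde u_\theta\in\wmcH$ set $\tilde u_{\FD}(x_i):=\tilde u_\theta(x_i)$; fact (a) gives
$$
\mcE_{\theta}(\tilde u_\theta)=\mcE_{\FD}(\tilde u_{\FD})\ge \mcE_{\FD}(u_{\FD}^*)=\mcE_{\theta}(u_\theta^*),
$$
so $u_\theta^*$ minimizes $\mcE_{\theta}$. The reverse implication is the symmetric argument: given a minimizer $u_\theta^*$ of $\mcE_{\theta}$, define $u_{\FD}^*(x_i):=u_\theta^*(x_i)$, lift an arbitrary $\tilde u_{\FD}\in\R^N$ to some $\tilde u_\theta\in\wmcH$ via (b), and conclude $\mcE_{\FD}(\tilde u_{\FD})=\mcE_{\theta}(\tilde u_\theta)\ge\mcE_{\theta}(u_\theta^*)=\mcE_{\FD}(u_{\FD}^*)$. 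This mirrors the contradiction-style argument of \cref{Prop:Discrete2}, but since the objective couples the grid values only through the separable sum of $\mcG$, no structural assumption on $\mcG$ (convexity, continuity, or finiteness, recalling $\mcG$ may take the value $+\infty$) is needed, and all manipulations remain valid in $\overline{\R}$.

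The only genuine subtlety, and the step I would be most careful about, is the interpretation and use of the word ``independent'' in the definition of $\wmcH$: it must be read as surjectivity of the evaluation map onto $\R^N$, i.e.\ the values at distinct grid points can be assigned freely. One should also note that no compatibility between the values off $\Omega^h$ is ever required, so the lifts produced by (b) need not be unique and their behaviour away from $\Omega^h$ is irrelevant to the argument. Everything beyond this observation is bookkeeping with the two displayed chains of (in)equalities.
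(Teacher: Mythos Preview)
Your proposal is correct and follows essentially the same approach as the paper: isolate the two facts that (a) equal grid values force equal energies and (b) the ``independence'' hypothesis means the evaluation map $\wmcH\to\R^N$ is surjective, then argue both directions by lifting/restricting competitors. The paper phrases each direction as a proof by contradiction rather than your direct chain of inequalities, but this is a cosmetic difference only.
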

\begin{proof}
The proof is similar to the proof of \cref{Prop:Discrete,Prop:Discrete2}. Nevertheless, we state it for completeness. 

``$\Rightarrow$'': Let $u_{\FD}^* \in \R^N$ be any minimizer of $\mcE_{\FD}$. By the definition of $\wmcH$ there exists a $u_{\theta}^*\in \wmcH$ such that $u_{\FD}^*(x) = u_{\theta}^*(x)$ for all $x\in\Omega^h$ and hence $\mcE_{\FD} (u_{\FD}^*) = \mcE_{\theta}(u_{\theta}^*)$. Assume that there exists a $u_{\theta} \neq u_{\theta}^*$ such that $\mcE_{\theta}(u_{\theta})< \mcE_{\theta}(u_{\theta}^*)$. Then we can define $u_{\FD}\in\R^N$ such that $u_{\FD}(x) = u_{\theta}(x)$ for all $x\in\Omega^h$. This yields $  \mcE_{\FD} (u_{\FD})= \mcE_{\theta}(u_{\theta})< \mcE_{\theta}(u_{\theta}^*) = \mcE_{\FD} (u_{\FD}^*)$, which is a contradiction to the optimality of $u_{\FD}^*$.

``$\Leftarrow$'': Let $u_\theta^*\in\wmcH$ be a solution of $\mcE_\theta$. We define $u_{\text{FD}}^*\in\R^N$ such that 
$u_{\text{FD}}^*(x) = u^*_\theta(x)$ for all $x\in\Omega^h$. This implies that $\mcE_\theta (u^*_\theta) = \mcE_{\FD}(u_{\FD}^*) $. Assume that $u_{\FD}^*$ is not a minimizer of $\mcE_{\FD}$, i.e.\ there is a $\tilde{u}_{\FD}\in\R^N$ with $\tilde{u}_{\FD}\neq {u}_{\FD}^*$ such that $\mcE_{\FD}(\tilde{u}_{\FD}) < \mcE_{\FD}(u_{\FD}^*)$. We construct an interpolation $\tilde{u}_\theta\in \wmcH$ of $\tilde{u}_{\FD}$ such that $\tilde{u}_{\text{FD}} (x) = \tilde{u}_\theta(x)$ for all $x\in\Omega^h$. Consequently $\mcE_\theta (\tilde{u}_\theta) = \mcE_{\FD}(\tilde{u}_{\FD}) < \mcE_{\FD}(u_{\FD}^*) = \mcE_\theta (u^*_\theta)$, which is a contradiction to $u_{\theta}^*$ being a solution of $\mcE_\theta$ and hence $u_{\FD}^*$ is indeed a minimizer of $\mcE_{\FD}$.
\end{proof}
Note that $\wmcH$ in \cref{Thm:Equivalence} can be a piecewise polynomial space of any polynomial order and hence we could set $\wmcH = \mcH$ the set of ReLU-NNs as well. 
Let us use \cref{Thm:Equivalence} to connect physics-informed neural networks (PINNs) and the Deep Ritz Method to a finite difference method by a simple example. To this end we only need to specify the function $\mcG$ of \cref{Thm:Equivalence}. The equivalence of the respective solutions in the sense of \cref{Thm:Equivalence} follows directly from \cref{Thm:Equivalence}.
\begin{example}
Given the Poisson problem with Dirichlet boundary conditions 
\begin{alignat*}{2}
\Delta u &= g  &\quad &\text{in } \Omega,\\
u &= g_{D} &\quad &\text{on } \partial \Omega.
\end{alignat*}
\begin{enumerate}[(a)]
\item \emph{PINNs:} $\mcG(u(x)) = w^x |\Delta^h u(x) - g(x)|^2$, where $\Delta^h$ is a finite difference discretization of the Laplace operator with incorporated boundary conditions and $w^x$ some suitable (quadrature) weights.

\item \emph{Deep Ritz Method:} $\mcG(u(x)) = w^x \left(\frac{1}{2}|\nabla^h u(x)|^2 - g(x)u(x)\right)$, where $\nabla^h$ is a finite difference discretization of the gradient operator with incorporated boundary conditions and $w^x$ some suitable (quadrature) weights.
\end{enumerate}

\end{example}

\Cref{Thm:Equivalence} can also be related to supervised learning as the following examples illustrates.
\begin{example}[Supervised learning]
Let $(x_i,g(x_i))_{i=1}^N$ be given training data, then $\mcG$ mimics the empiric loss function. E.g.\ in the case of the square loss we have $\mcG(u(x_i)) = \frac{1}{N} |u(x_i) - g(x_i)|^2$.
\end{example}

\section{Numerical experiments}\label{Sec:NumericalExperiments}

In the following we show numerical experiments supporting our theoretical findings and demonstrating that the model \eqref{Eq:ProblemHw:constrained:discrete2} can indeed be used to solve image processing tasks such as denoising, inpainting and deblurring as expected from total variation minimization. We recall that our implementation is done in Python using \tf\ and can be found at \cite{LangerBehnamianCode:24}.
To obtain a solution, we utilize the optimization algorithm Adam, which is already implemented in \tf. 
Thereby in each iteration step we check whether the loss, i.e.\ the functional in \eqref{Eq:ProblemHw:constrained:discrete2} evaluated at the current iterate, is reduced, i.e.\ is smaller than the loss of all previous iterates, and only update the approximation $u_\theta$ if this is the case. In this way we ensure that the current iterate is always the yet best approximation, i.e.\ the approximation which gives the smallest loss so far, leading to a monotonic decreasing sequence of energies.  

In all our experiments we set $w_q^x = \frac{1}{N}$ for all $x\in\Omega^h$ and $q\in \{1,2,\TV\}$.
Although a comparison of different quadratures might be of interest, as this may have influence on the solution and solution process, it is beyond the scope of the present paper. 

\subsection{Convergence to analytic solution}

In this section we consider two illustrative examples, one in one-dimension ($d=1$) and one in two-dimensions ($d=2$). These examples support our theoretical findings from above, demonstrating the $\Gamma$-convergence and the convergence of minimizers to the solution of the original problem.

 \subsubsection{1D example}\label{Sec:1DExample}

Let $\Omega = [a,b]\subset \R$ and consider the step function
$$
g(x) = \begin{cases}
0 & \text{if } x\in[a,x_1],\\
1 & \text{if } x\in(x_1,b],
\end{cases} 
$$
where $a < x_1 < b$. The exact minimizer of \eqref{Eq:ProblemTV} with $T=I$ and $\lambda=1$ is given in \cref{AppendixA}.
Here we set $a=0$, $x_1=1$, $b=2$, $\alpha_1 = 0.5$, $\alpha_2 = 1.25$ and $\lambda=1$. Then the solution is calculated as
$$
u^*(x) = \begin{cases}
0.2 & \text{if } x\in[a,x_1],\\
0.8 & \text{if } x\in(x_1,b],
\end{cases}
$$
yielding $E(u^*) = 0.9$, where $E$ is the functional in \eqref{Eq:ProblemTV}. To numerically compute a solution of \eqref{Eq:ProblemTV} within this setting we consider \eqref{Eq:ProblemHw:constrained:discrete2} with $N=1000$, $\nabla^h=\nabla_F^h$ being the forward difference gradient with homogeneous Neumann boundary conditions and $|\cdot|_\blacktriangleright = |\cdot|$. 
The neural network architecture, i.e.\ the solution space $\mcH$ is specified as follows: any network consists of 2 hidden layers, whereby the first hidden layer has 64 neurons and the second hidden layer has 128 neurons. We use Adam with learning rate 0.01 and numerically compute for $c\in\{0,0.1,0.5,1,10,100\}$ a solution of \eqref{Eq:ProblemHw:constrained:discrete2}, whereby the respective numerically computed solution is denote by $u_c$. The overall algorithm is terminated after 100001 iterations.

We observe that with increasing $c$ the energy $E_\theta(u_c)$ indeed approaches $E(u^*)$ as expected from our $\Gamma$-convergence result, see \cref{fig:1d:Gammaconvergence}, and the approximation $u_c$ approaches the exact minimizer $u^*$, see \cref{fig:1d:Minimizer}. In fact in this particular example $c=100$ is sufficient to obtain the analytic solution (within rounding-errors), see \cref{fig:1d:Gammaconvergence:signal}.
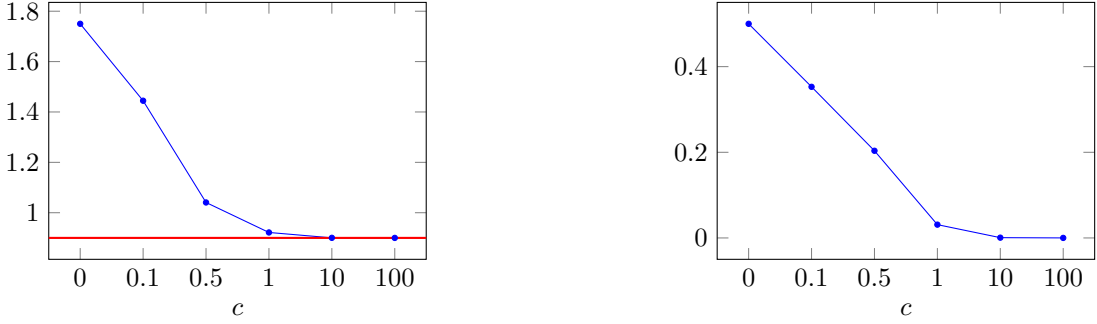
\begin{figure}[h]
\centering
\begin{subfigure}[t]{0.43\columnwidth}
\centering
\begin{tikzpicture}
\begin{axis}[
    xlabel={$c$},
    legend style={at={(1.05,1)},anchor=north west},
    width=6.6cm, height=5cm,
    xtick={0, 1, 2, 3, 4, 5},
    xticklabels={$0$, $0.1$, $0.5$, $1$, $10$, $100$},
    xmax=5.5,
    xmin=-0.5,
]
\addplot[ color=blue,
    mark=*,
    mark size=1pt
] coordinates {(0, 1.75) (1, 1.444625735282898) (2, 1.0406935214996338) (3, 0.9216306209564209) (4, 0.9003081917762756) (5, 0.8999999761581421)};
\addplot[red, thick] coordinates {(-0.5, 0.9) (5.5, 0.9)};
\end{axis}
\end{tikzpicture}
\caption{Energy $E_\theta(u_c)$ for different values $c$. The red line depicts the energy of the analytic solution $E(u^*)=0.9$.}\label{fig:1d:Gammaconvergence}
\end{subfigure}\hfill
\begin{subfigure}[t]{0.43\columnwidth}
\centering
\begin{tikzpicture}
\begin{axis}[
    xlabel={$c$},
    legend style={at={(1.05,1)},anchor=north west},
    width=6.6cm, height=5cm,
    xtick={0, 1, 2, 3, 4, 5},
    xticklabels={$0$, $0.1$, $0.5$, $1$, $10$, $100$},
    xmax=5.5,
    xmin=-0.5,
]
\addplot[ color=blue,
    mark=*,
    mark size=1pt
] coordinates {(0, 500.00006103515625/1000) (1, 352.8641052246094/1000) (2, 203.55564880371094/1000) (3, 31.07950782775879/1000) (4, 0.6743378639221191/1000) (5, 0.0824853777885437/1000)};
\end{axis}
\end{tikzpicture}
\caption{Distance between $u_c$ and $u^*$, i.e.\ $\frac{1}{N}\sum_{x\in\Omega^h} |u_c(x)-u^*(x)|$, for different values $c$.}\label{fig:1d:Minimizer}
\end{subfigure}
\caption{Visualizing $\Gamma$-convergence and the convergence of minimizers to the analytic solution for $d=1$.}
\end{figure}

\graphicspath{{./Scripts/1d/test_boundedweights/denoising/layers_64-128_acts_relu-relu_l2_0/ruleAR/learning_rate0.01/}}

\begin{figure}[h]
  \centering
  \begin{subfigure}[b]{0.3\textwidth}
        \centering
  \includegraphics[width=\linewidth]{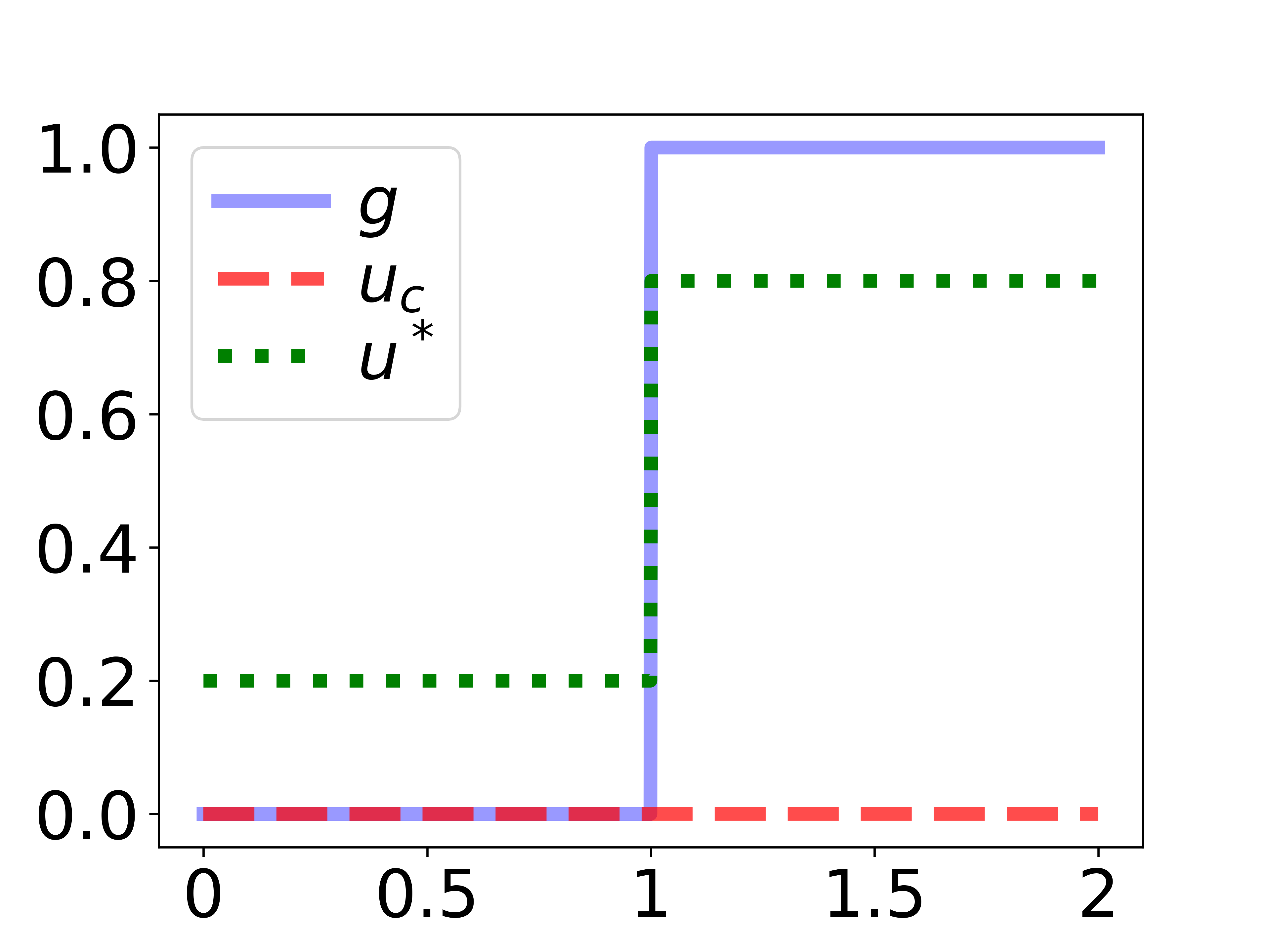}
  \caption{$c=0$}
    \end{subfigure}
    \begin{subfigure}[b]{0.3\textwidth}
        \centering
  \includegraphics[width=\linewidth]{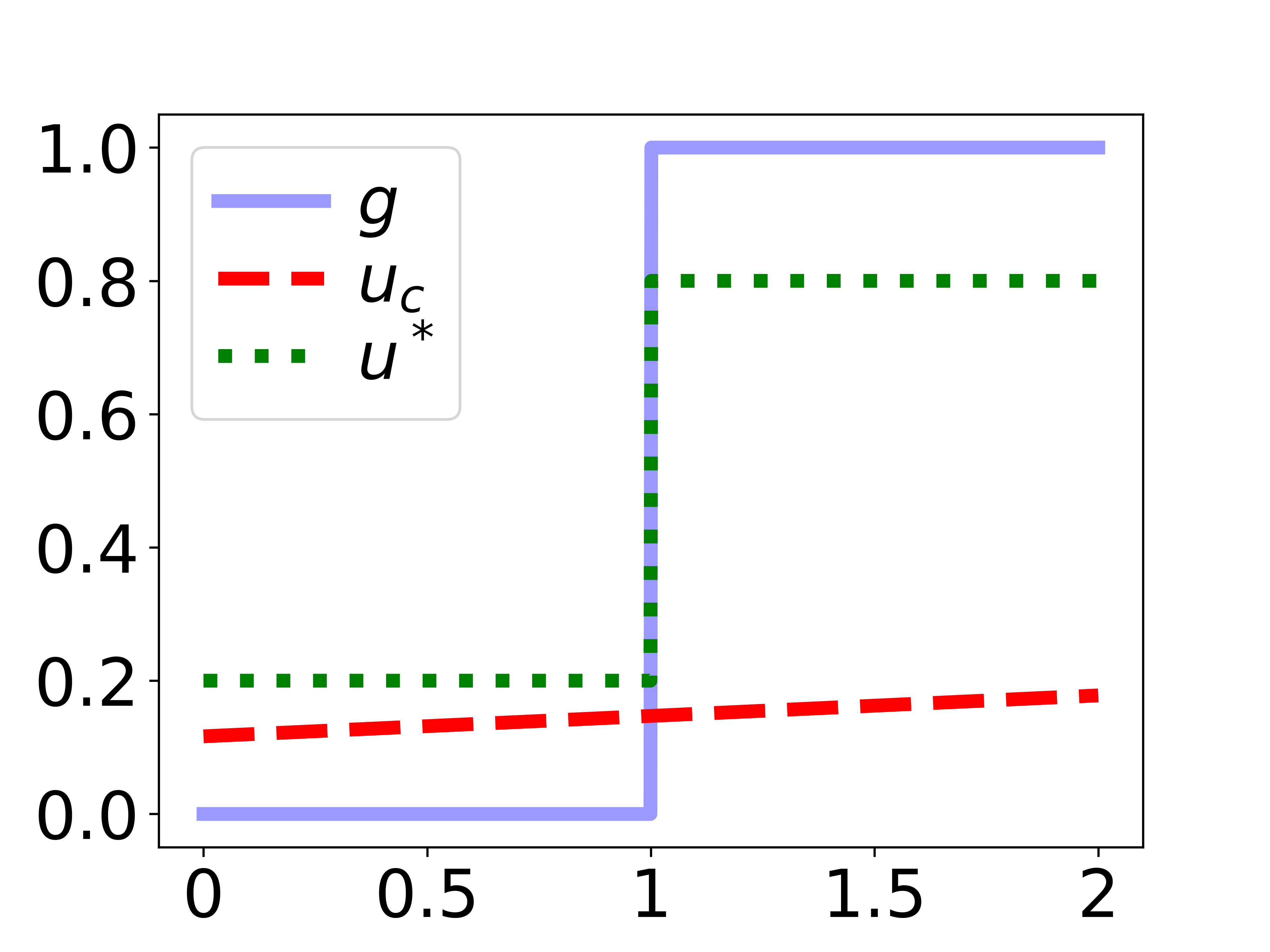}
  \caption{$c=0.1$}
    \end{subfigure}
    \begin{subfigure}[b]{0.3\textwidth}
        \centering
  \includegraphics[width=\linewidth]{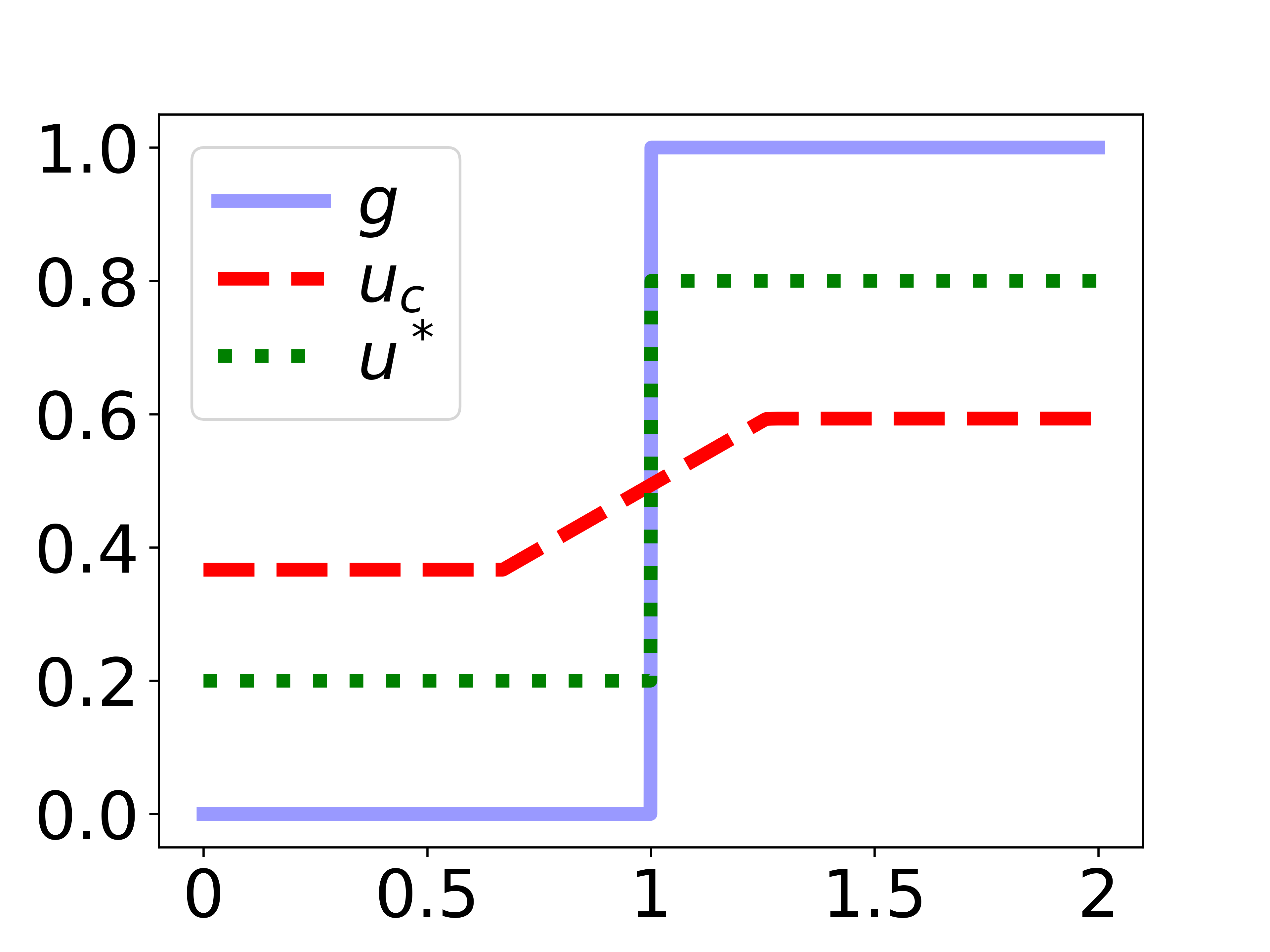}
  \caption{$c=0.5$}
    \end{subfigure}
    \begin{subfigure}[b]{0.3\textwidth}
        \centering
  \includegraphics[width=\linewidth]{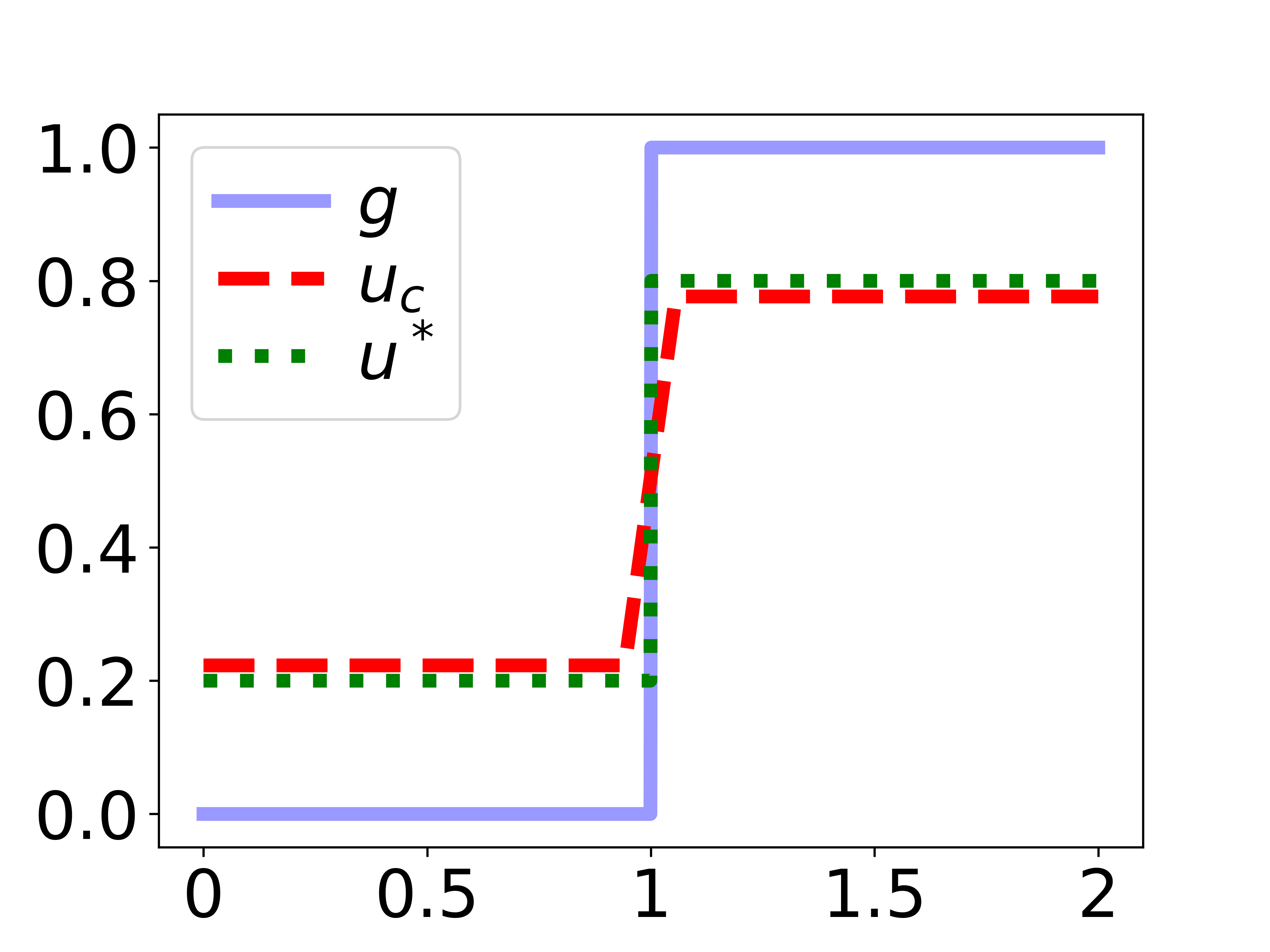}
  \caption{$c=1$}
    \end{subfigure}
    \begin{subfigure}[b]{0.3\textwidth}
        \centering
  \includegraphics[width=\linewidth]{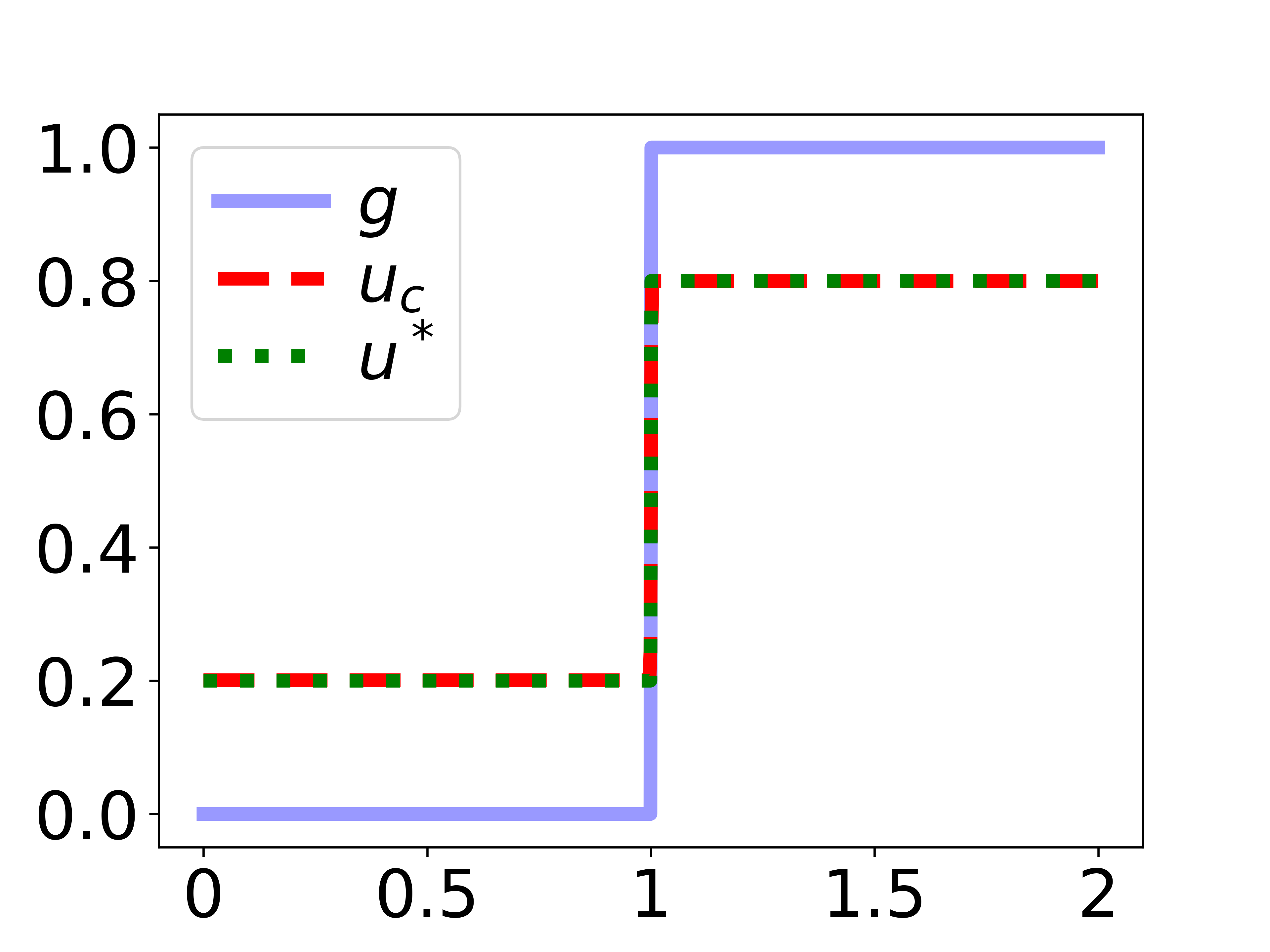}
  \caption{$c=10$}
    \end{subfigure}
    \begin{subfigure}[b]{0.3\textwidth}
        \centering
  \includegraphics[width=\linewidth]{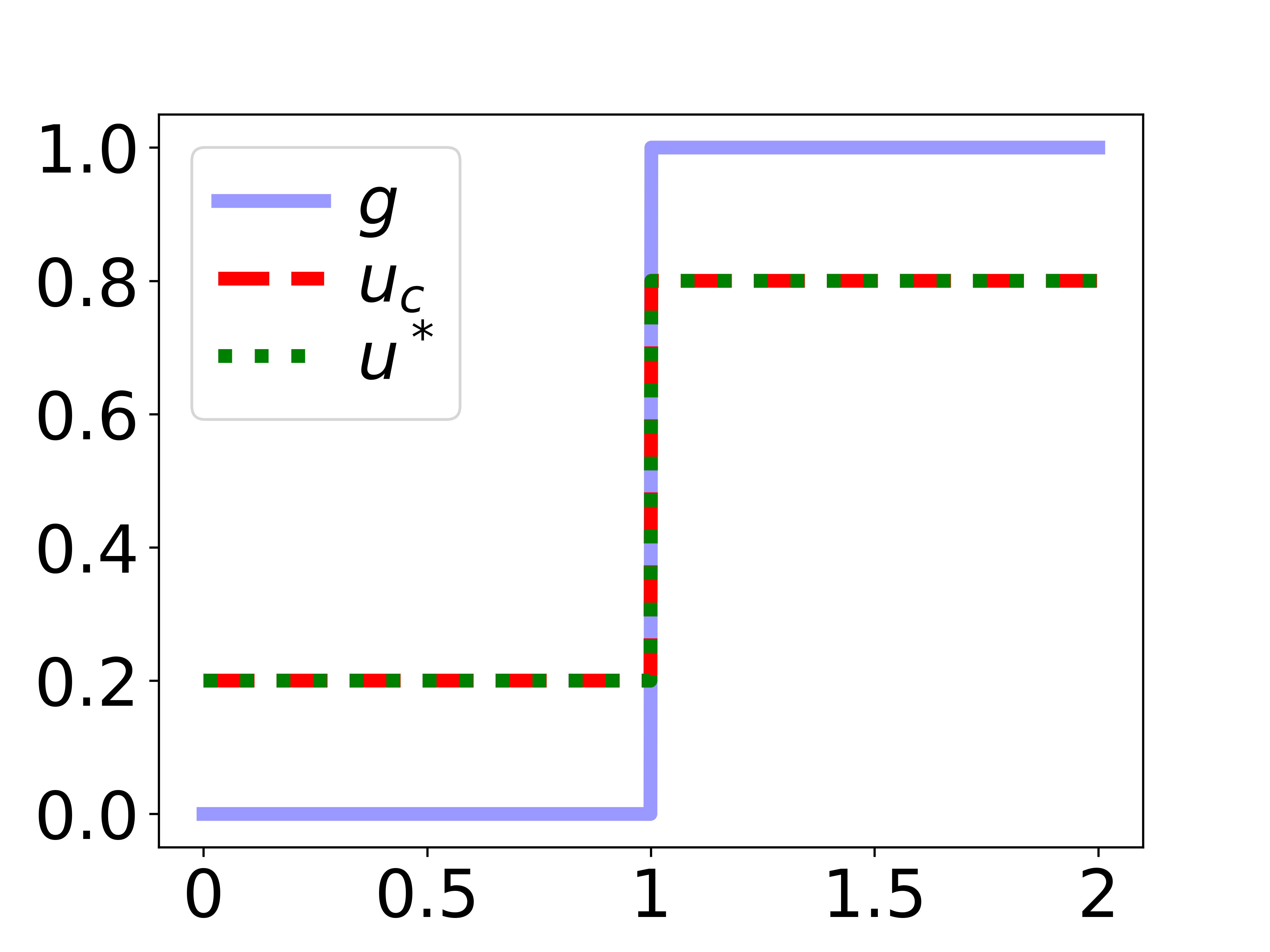}
  \caption{$c=100$}
    \end{subfigure}
  \caption{
    Neural network approximations $u_c$ with different values of $c$, exact solution $u^*$ and given data $g$.
  }
  \label{fig:1d:Gammaconvergence:signal}
\end{figure}

Althought the universal approximation results in \cref{Cor:UniversalApprox,Cor:UniversalApprox:BV} state that for $d=1$ only one hidden layer is needed to approximate any $BV$-function arbitrarily exact, we choose 2 hidden layers in our experiment. The reasons for this are, that it is not clear how many neurons one needs in the hidden layer and the optimization process seems to find easier a solution when 2 hidden layers are used. Note that the optimizer is stochastic and may get stuck at certain local minima, which are far away from the global solution. Indeed we observe this when only one hidden layer is used. For a similar reason we use 3 hidden layers in our experiments for $d=2$, which we will present next.

\subsubsection{2D example}\label{Sec:2DExample}

Let $g=\mathbf{1}_{B_{\radius}(0)}$ be a characteristic function of the disk $B_{\radius}(0)$ centered at the origin with radius $\radius>0$. The explicit solution of \eqref{Eq:ProblemTV} with $\Omega = \R^2$ and $T=I$ is given as
$$
u = \begin{cases}
0 & \text{if } 0 \leq \radius < \frac{2 \lambda}{2\alpha_2 + \alpha_1},\\
\left( \frac{2\alpha_2 + \alpha_1}{2\alpha_2} - \frac{\lambda}{\alpha_2 \radius}\right) \mathbf{1}_{B_{\radius}(0)} & \text{if }  \frac{2 \lambda}{2\alpha_2 + \alpha_1}\leq \radius \leq \frac{2 \lambda}{\alpha_1},\\
\mathbf{1}_{B_{\radius}(0)} & \text{if } \radius > \frac{2\lambda }{\alpha_1},
\end{cases}
$$ 
cf.\ \cite{HintermullerLanger:13}. We choose $\alpha_1=1$, $\alpha_2=7$, $\lambda=1$ and ${\radius}=\tfrac{1}{4}$. This yields the analytic solution $u^*= \tfrac{1}{2} \mathbf{1}_{B_{\radius}(0)}$ and the energy $E(u^*)\approx 1.227184630308513$.

To numerically compute a solution of \eqref{Eq:ProblemTV} within this setting we consider \eqref{Eq:ProblemHw:constrained:discrete2} with 
$$
|\cdot|_{\blacktriangleright}\in \left\{|\cdot|_{2,\gamma},|\cdot|_{2,1,\gamma},\max\{|\cdot|_2,\gamma\},|\cdot|_{2,1,\max_\gamma},\phi_{2,\gamma}(\cdot),\phi_{2,1,\gamma}(\cdot)\right\}
$$
 instead of $|\cdot|_{2}$ and $|\cdot|_{2,1}$, which did not work as mentioned above. We set $\nabla^h = \nabla_{\text{F}}^h$ if we use $|\cdot|_{2,\gamma},\max\{|\cdot|_2,\gamma\}$ or $\phi_{2,\gamma}(\cdot)$, while we set $\nabla^h = \nabla_{\text{FB}}^h$ if we use $|\cdot|_{2,1,\gamma},|\cdot|_{2,1,\max_\gamma}$ or $\phi_{2,1,\gamma}(\cdot)$. In all these cases we choose $\gamma=10^{-10}$.
Moreover, in $\nabla^h$ homogeneous Dirichlet boundary conditions are incorporated to mimic $\Omega=\R^2$  and that the solution is 0 outside the disk, while numerically the domain $[0,1]^2$ is chosen.
The neural network architecture, i.e.\ the solution space $\mcH$ is specified as follows: the network consists of 3 hidden layers while each layer has 128 neurons. Further we use Adam with learning rate 0.001 and the overall algorithm is terminated after 300001 iterations. With this specifications we compute solutions $u_c$ for pairs $(N,c)\in\{(33\times 33,0), (65\times 65,1),(129\times 129, 10),(257\times 257, 100),(513\times 513, 1000),(1025\times 1025, 10000)\}$ to investigate numerically the $\Gamma$-convergence. 
In \cref{fig:2d:Gammaconvergence} we depict the progress of the energy with increasing $(N,c)$ and in \cref{fig:2d:Gammaconvergence:Min} the difference between the computed approximation and the exact solution projected on the respective grid is presented. We clearly see that with increasing $N$ and $c$ in all considered cases the energy approaches $E(u^*)$, while the numerical approximations go towards the exact minimizer. The respective obtained numerical solutions are  presented in \cref{fig:GammaConvergence:2d:Reconstructions}. 
We observe that for all considered types of TV-smoothings, i.e.\ Huber-regularization, $\gamma$-lifting and $\max$-$\gamma$-lifting, $\TV_2^h$ and $\TV_{2,1}^h$ with $(N,c)=(1025\times 1025, 10000)$ obtain a rather close approximation of the analytic exact solution. Actually all these approximated discrete total variations seem to behave very similarly. 
Note that the convergence results from \cref{Thm:Gamma:Convergence:discrete,Thm:GammaConvergence:lifted} only cover $\TV_{2,1}^h$ and its smoothed versions. However, our numerical experiments suggest that also for $\TV_2^h$ and its smoothed versions a similar $\Gamma$-convergence result may hold.

\newcommand{\figgammatwod}{0.3\columnwidth}
\newcommand{\scaletikz}{0.7}
\begin{figure}
\centering
\begin{subfigure}[t]{\figgammatwod}
\centering
\begin{tikzpicture}[scale=\scaletikz]
\begin{axis}[
    xlabel={$c$},
    legend style={at={(1.05,1)},anchor=north west},
    width=6.6cm, height=5cm,
    xtick={0, 1, 2, 3, 4,5},
xticklabels={$0$,$10^0$, $10^1$, $10^2$, $10^3$, $10^4$},
    xmax=5.5,
    xmin=-0.5,
]
\addplot[ color=blue,
    mark=*,
    mark size=1pt
] coordinates {(0,1.6235077381134033) (1, 1.279463529586792) (2, 1.2584868669509888) (3, 1.2452771663665771) (4, 1.2399637699127197) (5, 1.235950231552124)};
\addplot[red, thick] coordinates {(-0.5, 1.227184630308513) (5.5, 1.227184630308513)};
\end{axis}
\end{tikzpicture}
\caption{$\TV_2^h$ with Huber-regularization and $\gamma=10^{-10}$.}
\end{subfigure}\hfill
\begin{subfigure}[t]{\figgammatwod}
\centering
\begin{tikzpicture}[scale=\scaletikz]
\begin{axis}[
    xlabel={$c$},
    legend style={at={(1.05,1)},anchor=north west},
    width=6.6cm, height=5cm,
    xtick={0, 1, 2, 3, 4,5},
xticklabels={$0$,$10^0$, $10^1$, $10^2$, $10^3$, $10^4$},
    xmax=5.5,
    xmin=-0.5,
]
\addplot[ color=blue,
    mark=*,
    mark size=1pt
] coordinates {(0,1.6235177516937256)(1, 1.2776644229888916) (2, 1.2583712339401245) (3, 1.2453627586364746) (4, 1.2396650314331055) (5, 1.2358920574188232)};
\addplot[red, thick] coordinates {(-0.5, 1.227184630308513) (5.5, 1.227184630308513)};
\end{axis}
\end{tikzpicture}
\caption{$\TV_2^h$ with $\gamma$-lifting and $\gamma=10^{-10}$.}
\end{subfigure}\hfill
\begin{subfigure}[t]{\figgammatwod}
\centering
\begin{tikzpicture}[scale=\scaletikz]
\begin{axis}[
    xlabel={$c$},
    legend style={at={(1.05,1)},anchor=north west},
    width=6.6cm, height=5cm,
    xtick={0, 1, 2, 3, 4,5},
xticklabels={$0$,$10^0$, $10^1$, $10^2$, $10^3$, $10^4$},
    xmax=5.5,
    xmin=-0.5,
]
\addplot[ color=blue,
    mark=*,
    mark size=1pt
] coordinates {(0,1.6235077381134033)(1, 1.2779808044433594) (2, 1.2584245204925537) (3, 1.24569833278656) (4, 1.2399111986160278) (5, 1.2359259128570557)};
\addplot[red, thick] coordinates {(-0.5, 1.227184630308513) (5.5, 1.227184630308513)};
\end{axis}
\end{tikzpicture}
\caption{$\TV_2^h$ with $\max$-$\gamma$-lifting and $\gamma=10^{-10}$.}
\end{subfigure}\hfill
\begin{subfigure}[t]{\figgammatwod}
\centering
\begin{tikzpicture}[scale=\scaletikz]
\begin{axis}[
    xlabel={$c$},
    legend style={at={(1.05,1)},anchor=north west},
    width=6.6cm, height=5cm,
    xtick={0, 1, 2, 3, 4,5},
xticklabels={$0$,$10^0$, $10^1$, $10^2$, $10^3$, $10^4$},
    xmax=5.5,
    xmin=-0.5,
]
\addplot[ color=blue,
    mark=*,
    mark size=1pt
] coordinates {(0,1.6235077381134033)(1, 1.3024890422821045) (2, 1.2594045400619507) (3, 1.2456555366516113) (4, 1.2398871183395386) (5, 1.2359402179718018)};
\addplot[red, thick] coordinates {(-0.5, 1.227184630308513) (5.5, 1.227184630308513)};
\end{axis}
\end{tikzpicture}
\caption{$\TV_{2,1}^h$ with Huber-regularization and $\gamma=10^{-10}$.}
\end{subfigure}\hfill
\begin{subfigure}[t]{\figgammatwod}
\centering
\begin{tikzpicture}[scale=\scaletikz]
\begin{axis}[
    xlabel={$c$},
    legend style={at={(1.05,1)},anchor=north west},
    width=6.6cm, height=5cm,
    xtick={0, 1, 2, 3, 4,5},
xticklabels={$0$,$10^0$, $10^1$, $10^2$, $10^3$, $10^4$},
    xmax=5.5,
    xmin=-0.5,
]
\addplot[ color=blue,
    mark=*,
    mark size=1pt
] coordinates {(0,1.6235177516937256)(1, 1.2757240533828735) (2, 1.2593684196472168) (3, 1.2457990646362305) (4, 1.2398912906646729) (5, 1.235342025756836)};
\addplot[red, thick] coordinates {(-0.5, 1.227184630308513) (5.5, 1.227184630308513)};
\end{axis}
\end{tikzpicture}
\caption{$\TV_{2,1}^h$ with $\gamma$-lifting and $\gamma=10^{-10}$.}
\end{subfigure}\hfill
\begin{subfigure}[t]{\figgammatwod}
\centering
\begin{tikzpicture}[scale=\scaletikz]
\begin{axis}[
    xlabel={$c$},
    legend style={at={(1.05,1)},anchor=north west},
    width=6.6cm, height=5cm,
    xtick={0, 1, 2, 3, 4,5},
xticklabels={$0$,$10^0$, $10^1$, $10^2$, $10^3$, $10^4$},
    xmax=5.5,
    xmin=-0.5,
]
\addplot[ color=blue,
    mark=*,
    mark size=1pt
] coordinates {(0,1.6235077381134033)(1, 1.2747306823730469) (2, 1.2593450546264648) (3, 1.24570631980896) (4, 1.2397031784057617) (5, 1.2353105545043945)};
\addplot[red, thick] coordinates {(-0.5, 1.227184630308513) (5.5, 1.227184630308513)};
\end{axis}
\end{tikzpicture}
\caption{$\TV_{2,1}^h$ with $\max$-$\gamma$-lifting and $\gamma=10^{-10}$.}
\end{subfigure}
\caption{Energy $E_\theta(u_c)$ for different values $c$ and $N$, where $N$ depends on $c$ such that $N(c) = 33 \times 33$ if $c=0$ and $N(c) = (2^{6+\log_{10}c}+1)\times (2^{6+\log_{10}c}+1)$ otherwise. The red line depicts the energy of the analytic solution $E(u^*)\approx 1.227184630308513$.}\label{fig:2d:Gammaconvergence}
\end{figure}
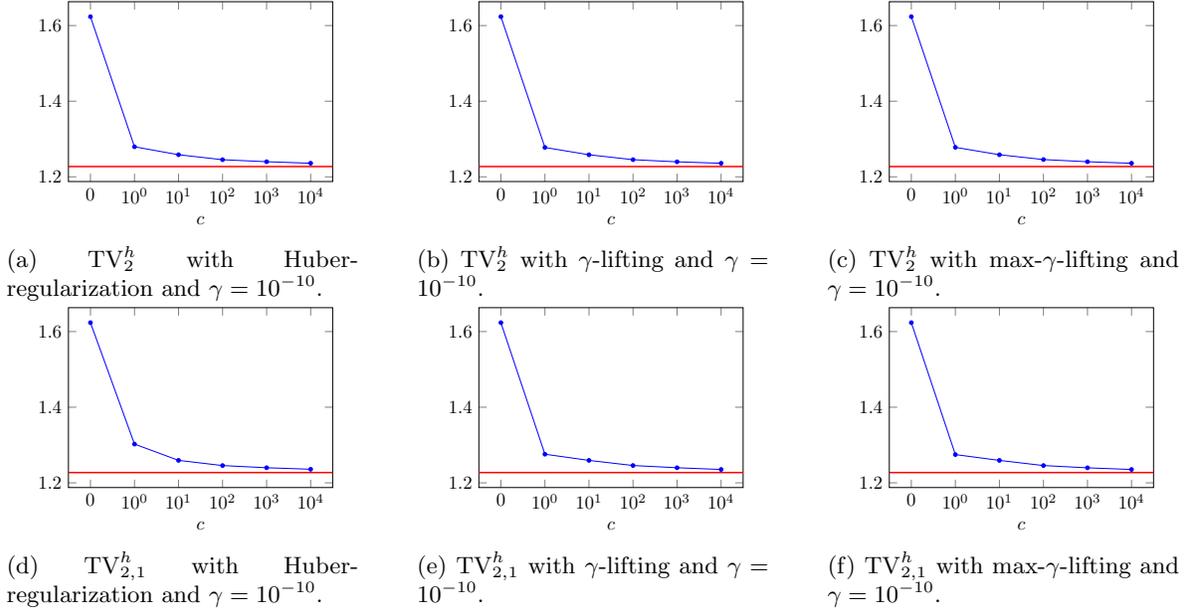

\renewcommand{\scaletikz}{0.63}
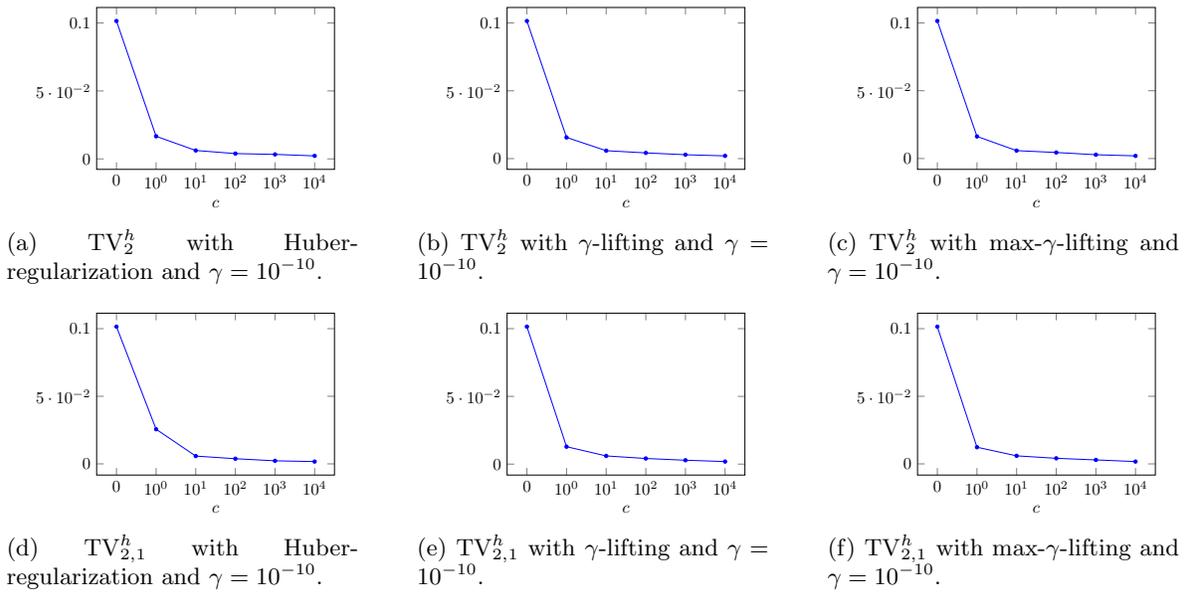
\begin{figure}
\centering
\begin{subfigure}[t]{\figgammatwod}
\centering
\begin{tikzpicture}[scale=\scaletikz]
\begin{axis}[
    xlabel={$c$},
    legend style={at={(1.05,1)},anchor=north west},
    width=6.6cm, height=5cm,
    xtick={0, 1, 2, 3, 4,5},
xticklabels={$0$,$10^0$, $10^1$, $10^2$, $10^3$, $10^4$},
    xmax=5.5,
    xmin=-0.5,
]
\addplot[ color=blue,
    mark=*,
    mark size=1pt
] coordinates {(0,0.1014692378328742) (1, 0.016622495030510356) (2, 0.006187072647541276) (3, 0.003897031749895201) (4, 0.0032986516050077367) (5, 0.0022089818188578227)};
\end{axis}
\end{tikzpicture}
\caption{$\TV_2^h$ with Huber-regularization and $\gamma=10^{-10}$.}
\end{subfigure}\hfill
\begin{subfigure}[t]{\figgammatwod}
\centering
\begin{tikzpicture}[scale=\scaletikz]
\begin{axis}[
    xlabel={$c$},
    legend style={at={(1.05,1)},anchor=north west},
    width=6.6cm, height=5cm,
    xtick={0, 1, 2, 3, 4,5},
xticklabels={$0$,$10^0$, $10^1$, $10^2$, $10^3$, $10^4$},
    xmax=5.5,
    xmin=-0.5,
]
\addplot[ color=blue,
    mark=*,
    mark size=1pt
] coordinates {(0,0.1014692378328742)(1, 0.01562829011996117) (2, 0.005859361245908543) (3, 0.004226841434425105) (4, 0.002879712094313682) (5, 0.002032399102096966)};
\end{axis}
\end{tikzpicture}
\caption{$\TV_2^h$ with $\gamma$-lifting and $\gamma=10^{-10}$.}
\end{subfigure}\hfill
\begin{subfigure}[t]{\figgammatwod}
\centering
\begin{tikzpicture}[scale=\scaletikz]
\begin{axis}[
    xlabel={$c$},
    legend style={at={(1.05,1)},anchor=north west},
    width=6.6cm, height=5cm,
    xtick={0, 1, 2, 3, 4,5},
xticklabels={$0$,$10^0$, $10^1$, $10^2$, $10^3$, $10^4$},
    xmax=5.5,
    xmin=-0.5,
]
\addplot[ color=blue,
    mark=*,
    mark size=1pt
] coordinates {(0,0.1014692378328742)(1, 0.016289532001201926) (2, 0.005832074503866814) (3, 0.004452151435244809) (4, 0.0028017511386605417) (5, 0.0019566922125966685)};
\end{axis}
\end{tikzpicture}
\caption{$\TV_2^h$ with $\max$-$\gamma$-lifting and $\gamma=10^{-10}$.}
\end{subfigure}\\ \vspace{0.3cm}

\begin{subfigure}[t]{\figgammatwod}
\centering
\begin{tikzpicture}[scale=\scaletikz]
\begin{axis}[
    xlabel={$c$},
    legend style={at={(1.05,1)},anchor=north west},
    width=6.6cm, height=5cm,
    xtick={0, 1, 2, 3, 4,5},
xticklabels={$0$,$10^0$, $10^1$, $10^2$, $10^3$, $10^4$},
    xmax=5.5,
    xmin=-0.5,
]
\addplot[ color=blue,
    mark=*,
    mark size=1pt
] coordinates {(0,0.1014692378328742)(1, 0.02562817093888684) (2, 0.005767988690131631) (3, 0.0038068711556417867) (4, 0.002254551037145451) (5, 0.0017059728909503272)};
\end{axis}
\end{tikzpicture}
\caption{$\TV_{2,1}^h$ with Huber-regularization and $\gamma=10^{-10}$.}
\end{subfigure}\hfill
\begin{subfigure}[t]{\figgammatwod}
\centering
\begin{tikzpicture}[scale=\scaletikz]
\begin{axis}[
    xlabel={$c$},
    legend style={at={(1.05,1)},anchor=north west},
    width=6.6cm, height=5cm,
    xtick={0, 1, 2, 3, 4,5},
xticklabels={$0$,$10^0$, $10^1$, $10^2$, $10^3$, $10^4$},
    xmax=5.5,
    xmin=-0.5,
]
\addplot[ color=blue,
    mark=*,
    mark size=1pt
] coordinates {(0,0.1014692378328742)(1, 0.012857675947381195) (2, 0.006043060891409489) (3, 0.004152226327855314) (4, 0.00286297415319386) (5, 0.0018826054524836407)};
\end{axis}
\end{tikzpicture}
\caption{$\TV_{2,1}^h$ with $\gamma$-lifting and $\gamma=10^{-10}$.}
\end{subfigure}\hfill
\begin{subfigure}[t]{\figgammatwod}
\centering
\begin{tikzpicture}[scale=\scaletikz]
\begin{axis}[
    xlabel={$c$},
    legend style={at={(1.05,1)},anchor=north west},
    width=6.6cm, height=5cm,
    xtick={0, 1, 2, 3, 4,5},
xticklabels={$0$,$10^0$, $10^1$, $10^2$, $10^3$, $10^4$},
    xmax=5.5,
    xmin=-0.5,
]
\addplot[ color=blue,
    mark=*,
    mark size=1pt
] coordinates {(0,0.1014692378328742)(1, 0.012331701876849113) (2, 0.005954733033011463) (3, 0.004123452043682768) (4, 0.0029407634852301415) (5, 0.0017255509648274837)};
\end{axis}
\end{tikzpicture}
\caption{$\TV_{2,1}^h$ with $\max$-$\gamma$-lifting and $\gamma=10^{-10}$.}
\end{subfigure}
\caption{Distance between $u_c$ and $u^*$, namely $\frac{1}{N}\sum_{x\in\Omega^h} |u_c(x)-u^*(x)|$, for different values $c$ and $N$, where $N$ depends on $c$ such that $N(c) = 33 \times 33$ if $c=0$ and $N(c) = (2^{6+\log_{10}c}+1)\times (2^{6+\log_{10}c}+1)$ otherwise.}\label{fig:2d:Gammaconvergence:Min}
\end{figure}

\graphicspath{{./Scripts/2024_08_08_gamma_convergance2d/Scripts/GammaConvergence/layers_128-128-128_acts_relu-relu-relu}}
\newcommand{\imgwidth}{0.16\linewidth}
\begin{figure}
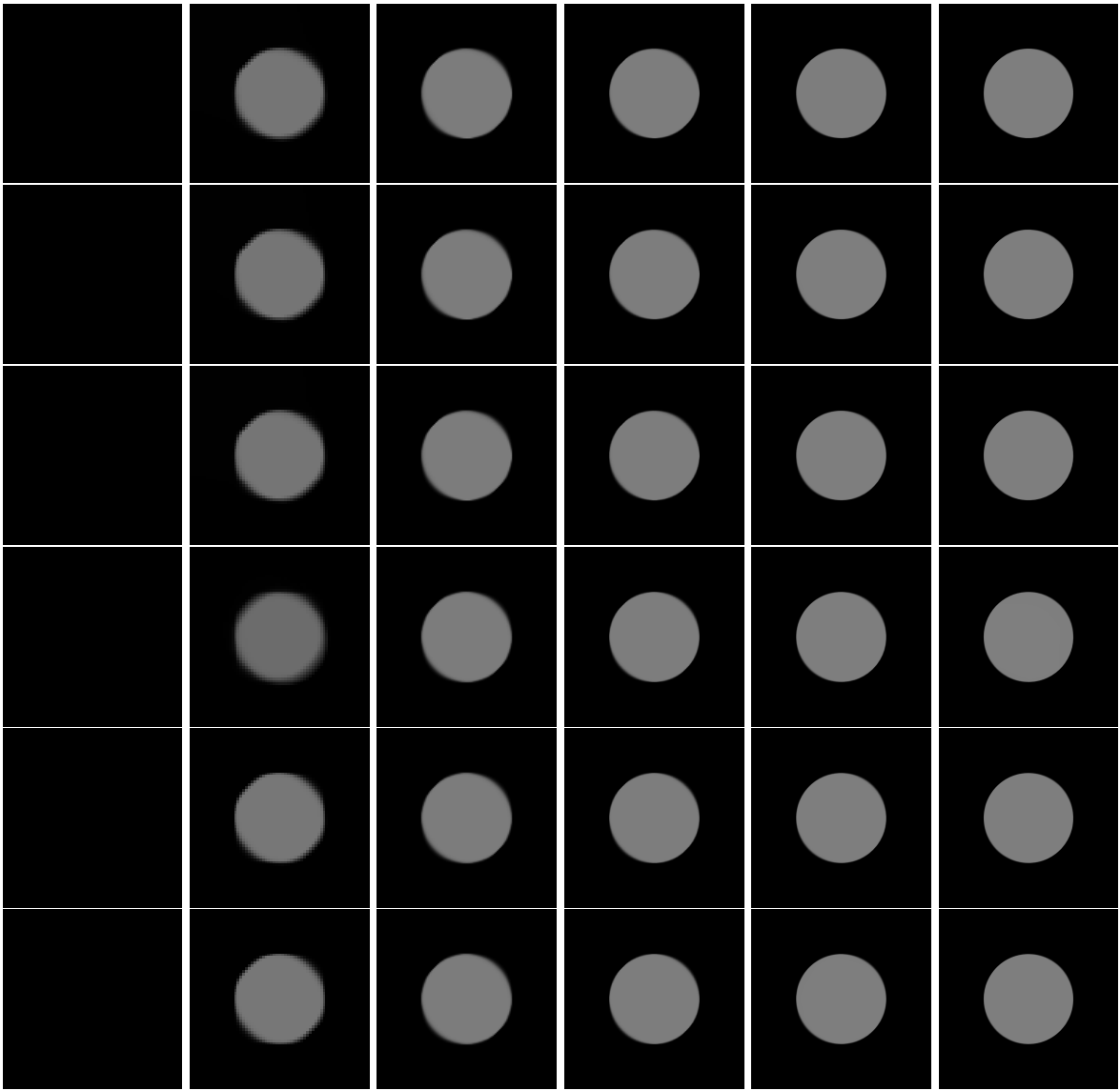

\centering
\includegraphics[width=\imgwidth]{/TV2/Huber/size33x33_maxvalue0/reconstruction_without_header.png}
\includegraphics[width=\imgwidth]{/TV2/Huber/size65x65_maxvalue1/reconstruction_without_header.png}
\includegraphics[width=\imgwidth]{/TV2/Huber/size129x129_maxvalue10/reconstruction_without_header.png}
\includegraphics[width=\imgwidth]{/TV2/Huber/size257x257_maxvalue100/reconstruction_without_header.png}
\includegraphics[width=\imgwidth]{/TV2/Huber/size513x513_maxvalue1000/reconstruction_without_header.png}
\includegraphics[width=\imgwidth]{/TV2/Huber/size1025x1025_maxvalue10000/reconstruction_without_header.png}

\includegraphics[width=\imgwidth]{/TV2/SmoothingOption1/size33x33_maxvalue0/reconstruction_without_header.png}
\includegraphics[width=\imgwidth]{/TV2/SmoothingOption1/size65x65_maxvalue1/reconstruction_without_header.png}
\includegraphics[width=\imgwidth]{/TV2/SmoothingOption1/size129x129_maxvalue10/reconstruction_without_header.png}
\includegraphics[width=\imgwidth]{/TV2/SmoothingOption1/size257x257_maxvalue100/reconstruction_without_header.png}
\includegraphics[width=\imgwidth]{/TV2/SmoothingOption1/size513x513_maxvalue1000/reconstruction_without_header.png}
\includegraphics[width=\imgwidth]{/TV2/SmoothingOption1/size1025x1025_maxvalue10000/reconstruction_without_header.png}

\includegraphics[width=\imgwidth]{/TV2/SmoothingOption2/size33x33_maxvalue0/reconstruction_without_header.png}
\includegraphics[width=\imgwidth]{/TV2/SmoothingOption2/size65x65_maxvalue1/reconstruction_without_header.png}
\includegraphics[width=\imgwidth]{/TV2/SmoothingOption2/size129x129_maxvalue10/reconstruction_without_header.png}
\includegraphics[width=\imgwidth]{/TV2/SmoothingOption2/size257x257_maxvalue100/reconstruction_without_header.png}
\includegraphics[width=\imgwidth]{/TV2/SmoothingOption2/size513x513_maxvalue1000/reconstruction_without_header.png}
\includegraphics[width=\imgwidth]{/TV2/SmoothingOption2/size1025x1025_maxvalue10000/reconstruction_without_header.png}

\includegraphics[width=\imgwidth]{/TV21/Huber/size33x33_maxvalue0/reconstruction_without_header.png}
\includegraphics[width=\imgwidth]{/TV21/Huber/size65x65_maxvalue1/reconstruction_without_header.png}
\includegraphics[width=\imgwidth]{/TV21/Huber/size129x129_maxvalue10/reconstruction_without_header.png}
\includegraphics[width=\imgwidth]{/TV21/Huber/size257x257_maxvalue100/reconstruction_without_header.png}
\includegraphics[width=\imgwidth]{/TV21/Huber/size513x513_maxvalue1000/reconstruction_without_header.png}
\includegraphics[width=\imgwidth]{/TV21/Huber/size1025x1025_maxvalue10000/reconstruction_without_header.png}

\includegraphics[width=\imgwidth]{/TV21/SmoothingOption1/size33x33_maxvalue0/reconstruction_without_header.png}
\includegraphics[width=\imgwidth]{/TV21/SmoothingOption1/size65x65_maxvalue1/reconstruction_without_header.png}
\includegraphics[width=\imgwidth]{/TV21/SmoothingOption1/size129x129_maxvalue10/reconstruction_without_header.png}
\includegraphics[width=\imgwidth]{/TV21/SmoothingOption1/size257x257_maxvalue100/reconstruction_without_header.png}
\includegraphics[width=\imgwidth]{/TV21/SmoothingOption1/size513x513_maxvalue1000/reconstruction_without_header.png}
\includegraphics[width=\imgwidth]{/TV21/SmoothingOption1/size1025x1025_maxvalue10000/reconstruction_without_header.png}

\includegraphics[width=\imgwidth]{/TV21/SmoothingOption2/size33x33_maxvalue0/reconstruction_without_header.png}
\includegraphics[width=\imgwidth]{/TV21/SmoothingOption2/size65x65_maxvalue1/reconstruction_without_header.png}
\includegraphics[width=\imgwidth]{/TV21/SmoothingOption2/size129x129_maxvalue10/reconstruction_without_header.png}
\includegraphics[width=\imgwidth]{/TV21/SmoothingOption2/size257x257_maxvalue100/reconstruction_without_header.png}
\includegraphics[width=\imgwidth]{/TV21/SmoothingOption2/size513x513_maxvalue1000/reconstruction_without_header.png}
\includegraphics[width=\imgwidth]{/TV21/SmoothingOption2/size1025x1025_maxvalue10000/reconstruction_without_header.png}

\caption{TV-solution of \eqref{Eq:ProblemHw:constrained:discrete2}  with  $g =\mathbf{1}_{B_r(0)}$ and different discretisations of the total variation, sizes $N$ and bounds $c$: From top to bottom: $\TV_2^h$ with Huber-regularization, $\TV_2^h$ with $\gamma$-lifting, $\TV_2^h$ with $\max$-$\gamma$-lifting, $\TV_{2,1}^h$ with Huber-regularization, $\TV_{2,1}^h$ with $\gamma$-lifting, $\TV_{2,1}^h$ with $\max$-$\gamma$-lifting. From left to right $(N,c) = (33\times 33,0)$, $(N,c) = (65\times 65,1)$, $(N,c) =(129\times 129, 10)$, $(N,c) =(257\times 257, 100)$, $(N,c) =(513\times 513, 1000)$, $(N,c) =(1025\times 1025, 10000)$.}\label{fig:GammaConvergence:2d:Reconstructions}
\end{figure}

\subsubsection{Evaluation of the error estimate}

Here we evaluate our derived error estimate from \cref{Sec:ErrorEstimate}. We use the same two examples as above, see \cref{Sec:1DExample,Sec:2DExample}, as for these we have the exact solution, which allows us to compute the real error $\|u^* - u\|_{L^2(\Omega)}$, where $u^*$ is the exact solution and $u$ a computed approximation. Moreover, in these examples $T=I$ is the identity operator and hence the error estimate 
\eqref{Eq:ErrorEstimate} writes as
\begin{align*}
 \|u^* - v\|_{L^2(\Omega)} \leq \rho(v):=\rho_1(v) + \rho_2(v)
\end{align*}
for all $v\in L^2(\Omega)$, where $\rho_1(v):= 2 \|v - g\|_{L^2(\Omega)}$ and $\rho_2(v):=\frac{1}{\alpha_2}\|\xi_G\|_{L^2(\Omega)}$.
In order to investigate the nature of the error estimate and how the two terms $\rho_1$ and $\rho_2$ behave, we depict $\rho$, $\rho_1$ and $\rho_2$ together with the real error in \cref{Fig:ErrorEstimate}. Note that in \cref{Fig:ErrorEstimate} on the $x$-axes we have the number of updates of the approximation, i.e.\ the iterations at which the loss was reduced, and not the number of iterations. Exemplary we depict the error estimate for the example in \cref{Sec:1DExample} with $c=100$ and for the example in \cref{Sec:2DExample} with $c=10000$, using $\TV_2^h$ with Huber-regularization and $\TV_{2,1}^h$ with $\gamma$-lifting. A similar behavior can be observed for the other settings of \cref{Sec:1DExample,Sec:2DExample}. 
In \cref{Fig:ErrorEstimate} we see that while the error estimate indeed always holds, as expected, it is governed by $\rho_2$. In fact $\rho_2$ leads to an oscillatory behavior of the error estimate $\rho$, which does not seem to decrease although the real error is. This behavior does not really seem to make the error estimate practical, for example, to be used in a stopping criterion. 

\graphicspath{{./Scripts/1d/test_boundedweights/denoising/layers_64-128_acts_relu-relu_l2_0/ruleAR/learning_rate0.01/}}
\begin{figure}
\centering
\begin{subfigure}[t]{0.31\textwidth}
        \centering
  \includegraphics[width=\linewidth]{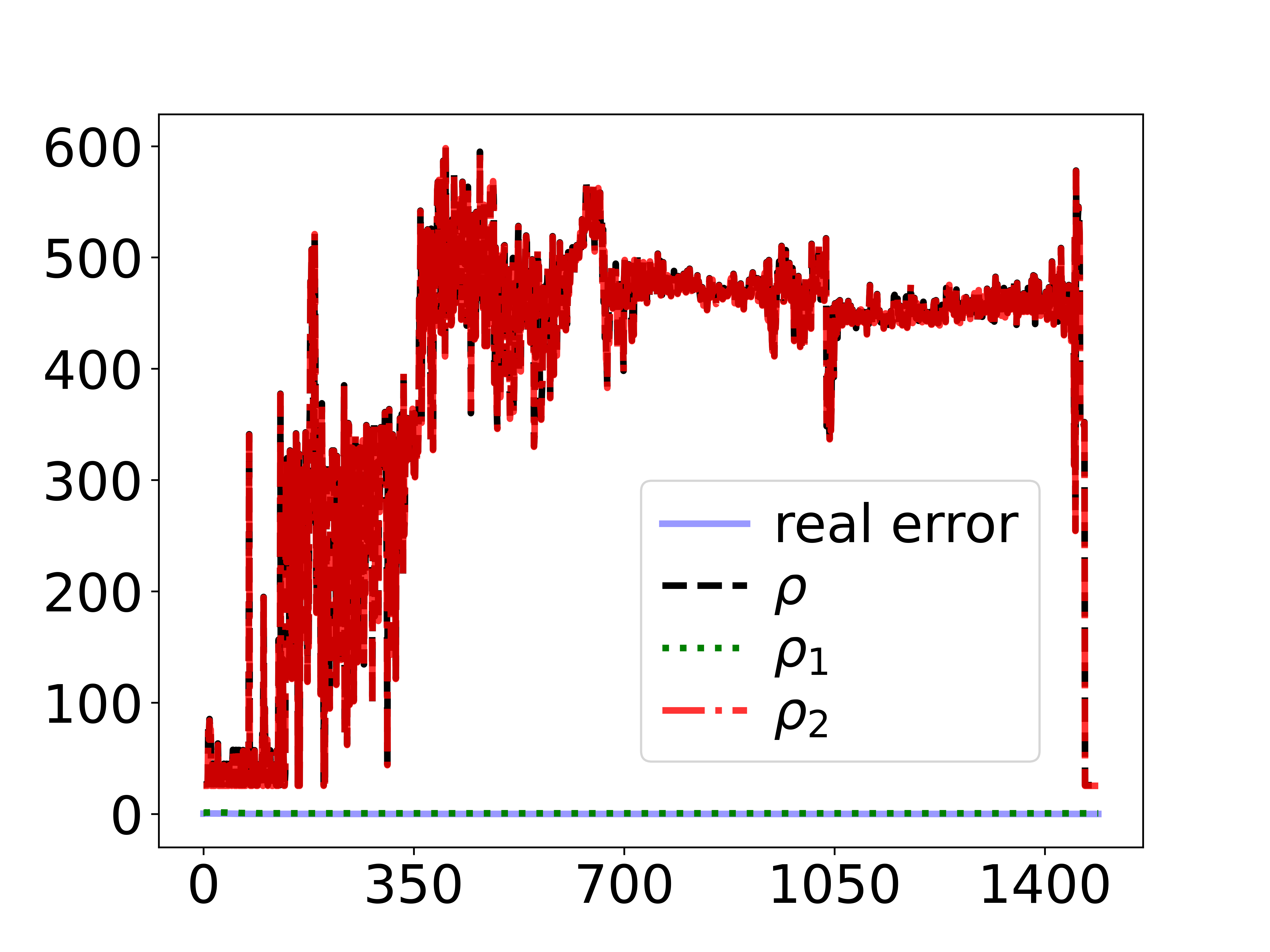}
  \caption{Example of \cref{Sec:1DExample} with $c=100$.}
    \end{subfigure}\hfill
    \graphicspath{{./Scripts/2024_08_08_gamma_convergance2d/Scripts/GammaConvergence/layers_128-128-128_acts_relu-relu-relu}}
    \begin{subfigure}[t]{0.31\textwidth}
        \centering
  \includegraphics[width=\linewidth]{/TV2/Huber/size1025x1025_maxvalue10000/error_plot_new.png}
  \caption{Example of \cref{Sec:2DExample} with $\TV_2^h$ using Huber-regularization and $c=10000$.}
    \end{subfigure}\hfill
    \begin{subfigure}[t]{0.31\textwidth}
        \centering
  \includegraphics[width=\linewidth]{/TV21/SmoothingOption1/size1025x1025_maxvalue10000/error_plot_new.png}
  \caption{Example of \cref{Sec:2DExample} with $\TV_{2,1}^h$ using $\gamma$-lifting and $c=10000$.}
    \end{subfigure}
    \caption{Error estimate $\rho = \rho_1 + \rho_2$ versus real error. Here the $x$-axes represents the number of updates made to the approximation.} \label{Fig:ErrorEstimate}
\end{figure}

\subsection{Applications}
We consider three applications, namely denoising, inpainting and deblurring. In all the applications the image is scaled to the unit square $[0,1]^2$. In order to recover the original image from the corrupted observation we minimize \eqref{Eq:ProblemHw:constrained:discrete2}. Thereby in all the experiments we choose a forward difference discretisation of the gradient operator with incorporated homogeneous Neumann boundary conditions. Further we choose $\lambda=1$ and $\TV_2^h$ with $\gamma$-lifting. That is the last term in \eqref{Eq:ProblemHw:constrained:discrete2}, i.e.\ $\frac{\lambda}{N} \sum_{x\in\Omega^h} |\nabla^h u_\theta(x)|_\blacktriangleright$, is set to be $\frac{1}{N} \sum_{x\in\Omega^h} |\nabla_{\text{F}}^h u_\theta(x)|_{2,\gamma}$, where $\gamma=10^{-10}$. 
The solution space $\mcHc{c}{M}$ consists of ReLU-NNs with 3 hidden layers, where each layer has 128 neurons, and we set $c=10000$. Adam with learning rate 0.001 is used to find an approximation and the training is always stopped after 300001 iterations. The parameters $\alpha_1$ and $\alpha_2$ are selected differently depending on the application and specified at the respective places. However, they are not optimized in any way and only chosen such that the effect of the total variation can be observed. Next, we describe the image reconstruction tasks before we present and discuss the respective results.

\graphicspath{{./Scripts/application_circle/}}

\begin{figure}
\centering
\includegraphics[width=0.19\linewidth]{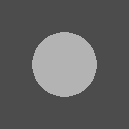}
\includegraphics[width=0.19\linewidth]{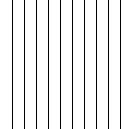}
\caption{Original disk-image of size $129\times 129$ pixels (left) and inpainting mask (right).}\label{Fig:OriginalImages}
\end{figure}

\paragraph{Denoising}
Image denoising is the task of removing noise from an image. We generate from the original image $u_{\text{orig}}$ a noisy input $g = \mcN(u_{\text{orig}} + \eta)$, where $\eta$ represents additive Gaussian noise with zero mean and standard deviation $\sigma$ and $\mcN$ represents salt-and-pepper noise with probability $s\in[0,1)$, i.e.\
$$
g(x) = \begin{cases}
0 &\text{ with probability } \frac{s}{2}, \\
1 &\text{ with probability } \frac{s}{2}, \\
u_{\text{orig}} + \eta &\text{ with probability } 1- s. 
\end{cases}
$$
 Hence for this application we set $T^h$ being the identity operator in \eqref{Eq:ProblemHw:constrained:discrete2}.

For $u_{\text{orig}}$ being the disk-image of size $129\times 129$ pixels, see \cref{Fig:OriginalImages}, we add noise with $\sigma=0.1$ and $s=0.1$. The obtained image $g$ can be seen in the first row and first column of \cref{Fig:Applications:Circle}. Additionally to the above settings we choose $\alpha_1=10$, $\alpha_2=30$, and $N_1 = N_2 = 129$. 

For $u_{\text{orig}}$ being the building-image of size $256\times 256$ pixels, see \cref{Fig:OriginalImages_building}, we add noise with $\sigma=0.1$ and $s=0.05$. The obtained image $g$ can be seen in the first row and first column of \cref{Fig:Applications:Building}. Additionally to the above settings we choose $\alpha_1=100$, $\alpha_2=100$, and $N_1 = N_2 = 256$. 

\graphicspath{{./Scripts/Lund/}}
\begin{figure}[h]
\centering
\includegraphics[width=0.19\linewidth]{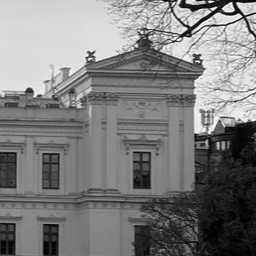}
\includegraphics[width=0.19\linewidth]{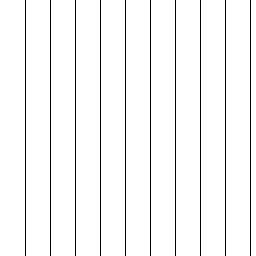}
\caption{Original building-image of size $256\times 256$ pixels (left) and inpainting mask (right).}\label{Fig:OriginalImages_building}
\end{figure}

\paragraph{Inpainting}

Image inpainting is the task of restoring missing parts of an image. That is the observed image $g$ is only given in $\Omega\setminus D$, where $D\subset \Omega$ is the so-called inpainting domain. The task is then to recover $g$ in $D$. Hence for this application we set $T= \mathbf{1}_{\Omega\setminus D}$ (inpainting operator) defined by
$$
\mathbf{1}_{\Omega\setminus D}u(x) = \begin{cases}
u(x) & x\in \Omega\setminus D,\\
0& x\in D.
\end{cases}
$$
For $u_{\text{orig}}$ being the disk-image of size $129\times 129$ pixels, see \cref{Fig:OriginalImages}, i.e.\ $N_1 = N_2= 129$, we apply the inpainting mask in \cref{Fig:OriginalImages}. The obtained image $g$ can be seen in the second row and first column of \cref{Fig:Applications:Circle}. Further we set $\alpha_1=300$, $\alpha_2=300$. 

We also consider the situation where this observed image is distorted by Gaussian and salt-and-pepper noise in addition to the missing parts, i.e.\ we perform simultaneously image inpainting and denoising. In particular we first apply the mask and then add additive Gaussian noise with 0 mean and standard deviation $\sigma=0.1$ and salt-and-pepper noise with $s=0.1$. The respective observation can be seen in the third row and first column of \cref{Fig:Applications:Circle}. Here we set $\alpha_1=10$, $\alpha_2=30$. 

For $u_{\text{orig}}$ being the building-image of size $256\times 256$ pixels, see \cref{Fig:OriginalImages_building}, i.e.\ $N_1 = N_2= 256$, we apply the inpainting mask in \cref{Fig:OriginalImages_building}. The obtained image $g$ can be seen in the second row and first column of \cref{Fig:Applications:Building}. We set $\alpha_1=300$ and $\alpha_2=300$ for our calculations. When this image is additionally distorted by additive Gaussian white noise with $\sigma=0.1$ and salt-and-pepper noise with $s=0.05$, see third row and first column of \cref{Fig:Applications:Building}, we set $\alpha_1=100$ and $\alpha_2=100$.

\paragraph{Deblurring}
Image deblurring is the task of reconstructing a sharp image from its blurry observation. Here we assume that the blurring kernel is known. In particular in our experiments we take a Gaussian blurring kernel of size $11\times 11$ pixels and standard deviation 20. The operator $T^h$ in \eqref{Eq:ProblemHw:constrained:discrete2} is then chosen accordingly. Hence the observed image depicted in the fourth row and first column of \cref{Fig:Applications:Circle} is obtained by applying $T^h$ on the disk-image of size $129\times 129$ pixels, see \cref{Fig:OriginalImages}.
 We set $\alpha_1=300$, $\alpha_2=300$ and $N_1= 129 = N_2$ in \eqref{Eq:ProblemHw:constrained:discrete2}.

We also consider the situation where this observed image is distorted by Gaussian and salt-and-pepper noise in addition to being blurred, i.e.\ we perform simultaneously image deblurring and denoising. In particular, after blurring the image with the above mentioned Gaussian kernel we add additive Gaussian noise with 0 mean and standard deviation $\sigma=0.1$ and salt-and-pepper noise with $s=0.1$. The respective observation can be seen in the last row and first column of \cref{Fig:Applications:Circle}. Here we set $\alpha_1=10$, $\alpha_2=30$. 

For $u_{\text{orig}}$ being the building-image of size $256\times 256$ pixels, see \cref{Fig:OriginalImages_building}, i.e.\ $N_1 = N_2= 256$, we apply the same blurring kernel to obtain $g$, which is depicted in the fourth row and first column of \cref{Fig:Applications:Building}. We set $\alpha_1=300$ and $\alpha_2=300$ for our calculations. When this image is additionally distorted by additive Gaussian white noise with $\sigma=0.1$ and salt-and-pepper noise with $s=0.05$, see last row and first column of \cref{Fig:Applications:Building}, we set $\alpha_1=100$ and $\alpha_2=100$.

\paragraph{Results}

The obtained restoration for all applications on the grid of the observed image can be seen in the second column of \cref{Fig:Applications:Circle,Fig:Applications:Building}. Since a neural network is a continuous function not restricted to a particular grid, the obtained solutions can be plotted on any discretisation of $\Omega$. We plot the obtained solutions on the grid of the observed image, i.e.\ on a grid with $129 \times 129$ pixels for the disk-image and $256 \times 256$ pixels for the building-image, see second column of \cref{Fig:Applications:Circle,Fig:Applications:Building}, and on a 10 times finer grid, i.e.\ on a grid with $1290 \times 1290$ pixels for the disk-image and $2560 \times 2560$ pixels for the building-image, see fourth column of \cref{Fig:Applications:Circle,Fig:Applications:Building}. In order to see the difference in representation of the solutions we zoom-in in one part of the images, see third and fifth column of \cref{Fig:Applications:Circle,Fig:Applications:Building}. 
Here we observe that the solutions on a fine grid seems to be well adjusted to the nature of the original signal to be restored, e.g.\ the curvature of the disk seems to be preserved. 
However, we emphasize that according to \cref{Prop:Discrete,Prop:Discrete2} on a fixed grid the solution of \eqref{Eq:ProblemHw:constrained:discrete2} is the same as the respective finite difference solution, if it exists, in the sense that the values of the solutions on the grid points are the same. Hence the effects of the neural network's design are only apparent or relevant at a level of detail finer than what is captured by the current data resolution.

\graphicspath{{./Scripts/application_circle/}}
\renewcommand{\imgwidth}{0.19\linewidth}
\begin{figure}
\includegraphics[width=\imgwidth]{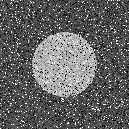}
\includegraphics[width=\imgwidth]{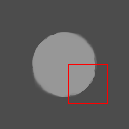}
\includegraphics[width=\imgwidth]{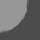}
\includegraphics[width=\imgwidth]{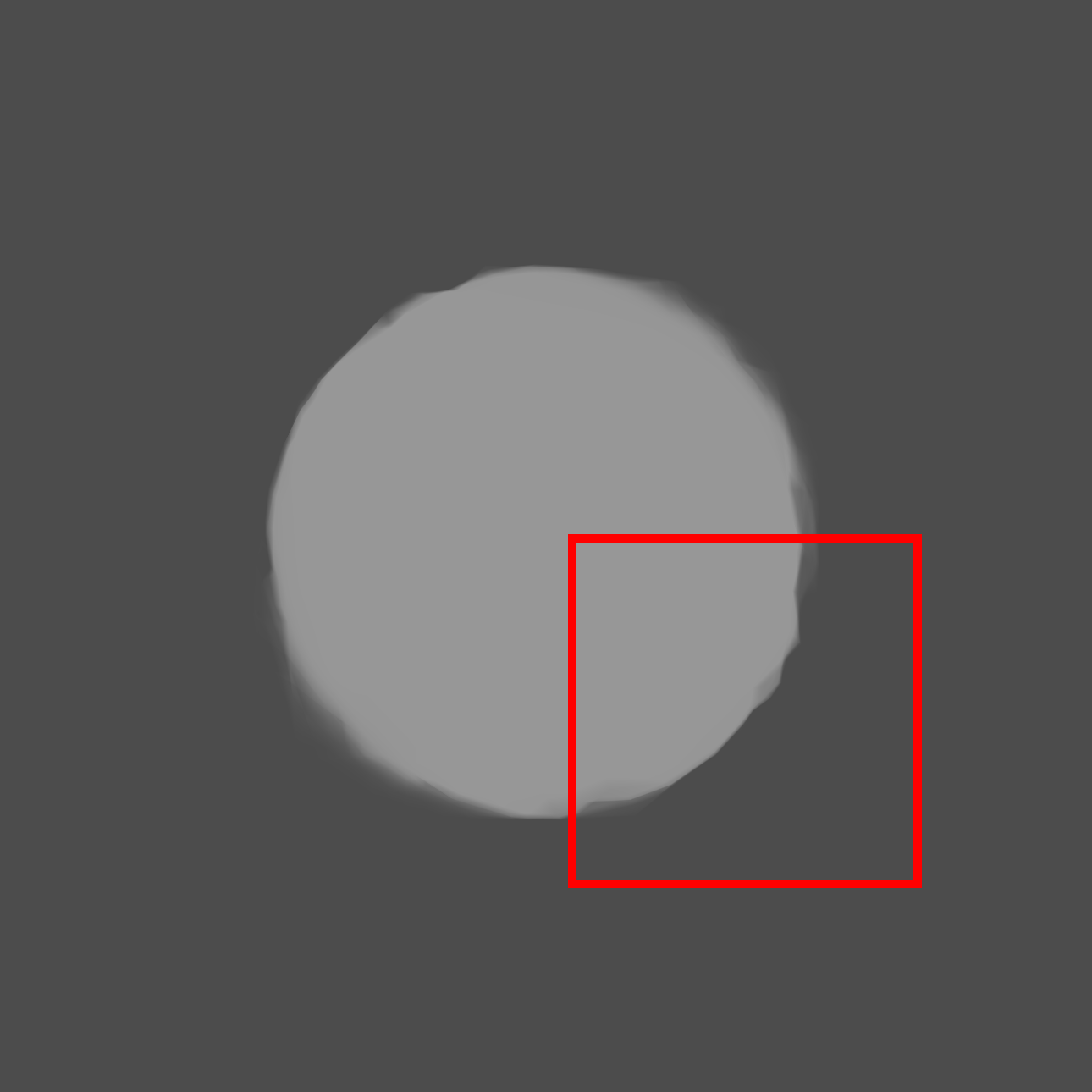}
\includegraphics[width=\imgwidth]{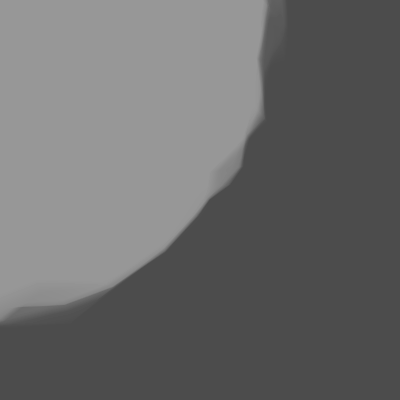}

\includegraphics[width=\imgwidth]{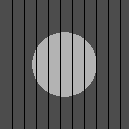}
\includegraphics[width=\imgwidth]{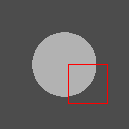}
\includegraphics[width=\imgwidth]{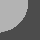}
\includegraphics[width=\imgwidth]{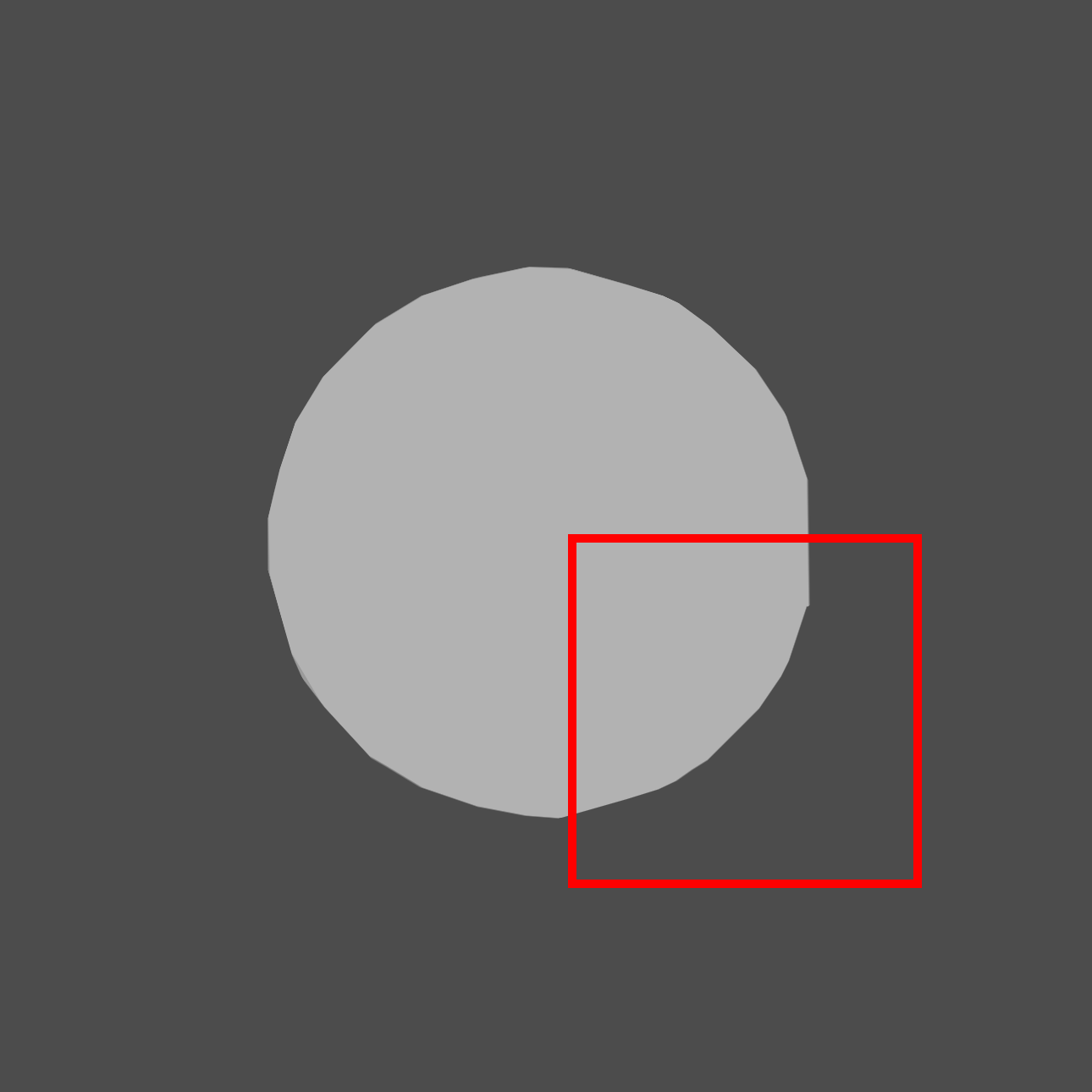}
\includegraphics[width=\imgwidth]{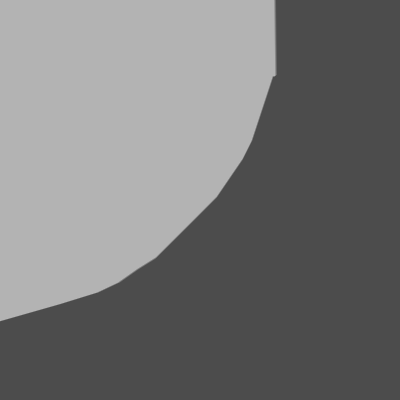}

\includegraphics[width=\imgwidth]{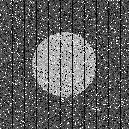}
\includegraphics[width=\imgwidth]{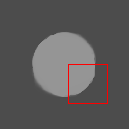}
\includegraphics[width=\imgwidth]{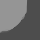}
\includegraphics[width=\imgwidth]{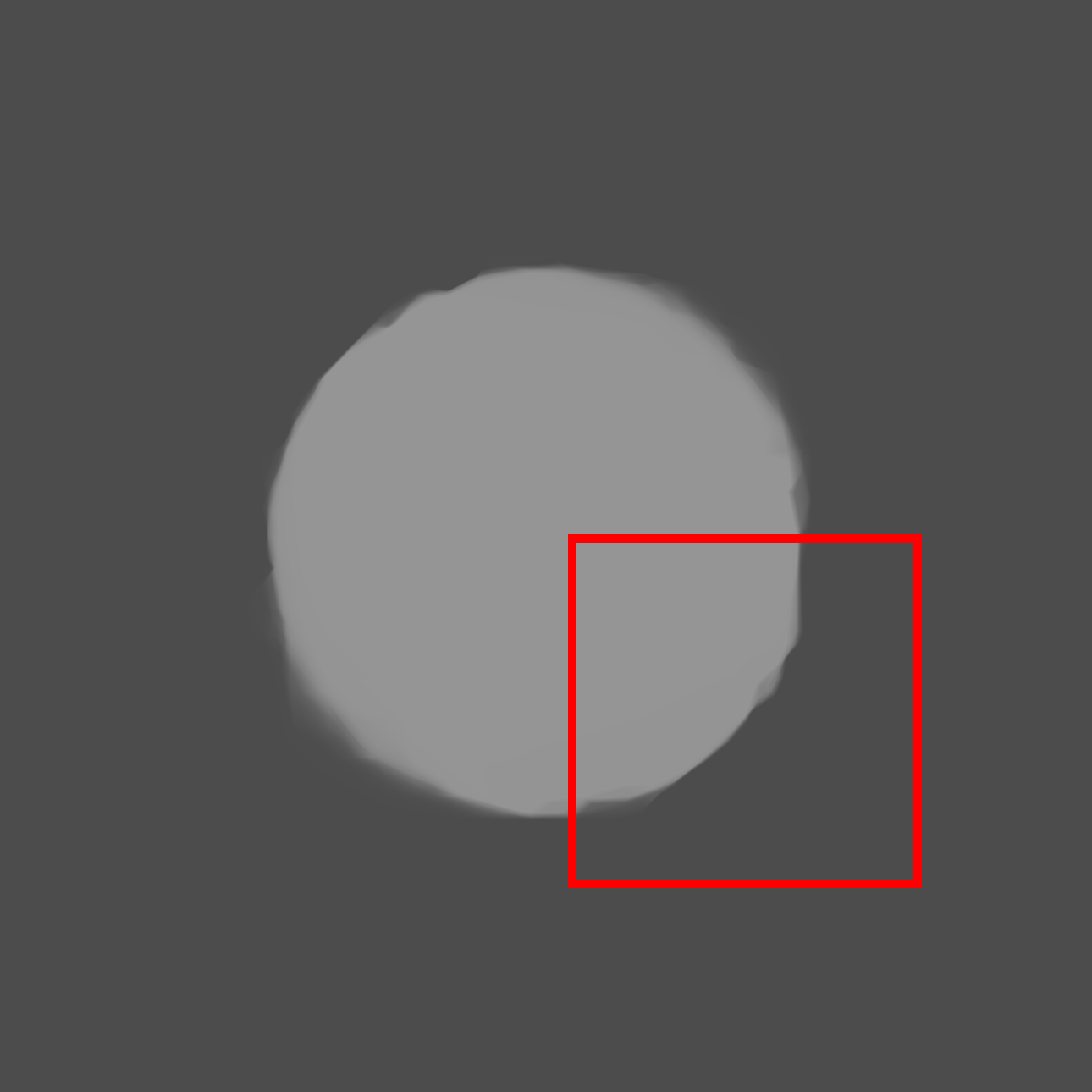}
\includegraphics[width=\imgwidth]{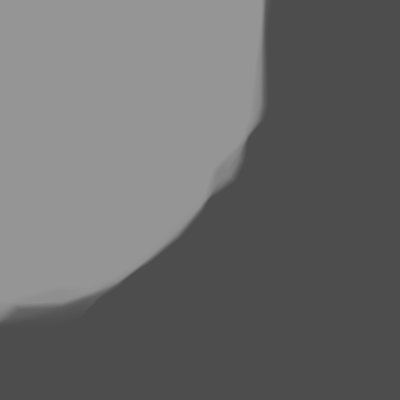}

\includegraphics[width=\imgwidth]{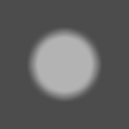}
\includegraphics[width=\imgwidth]{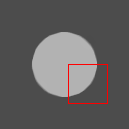}
\includegraphics[width=\imgwidth]{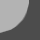}
\includegraphics[width=\imgwidth]{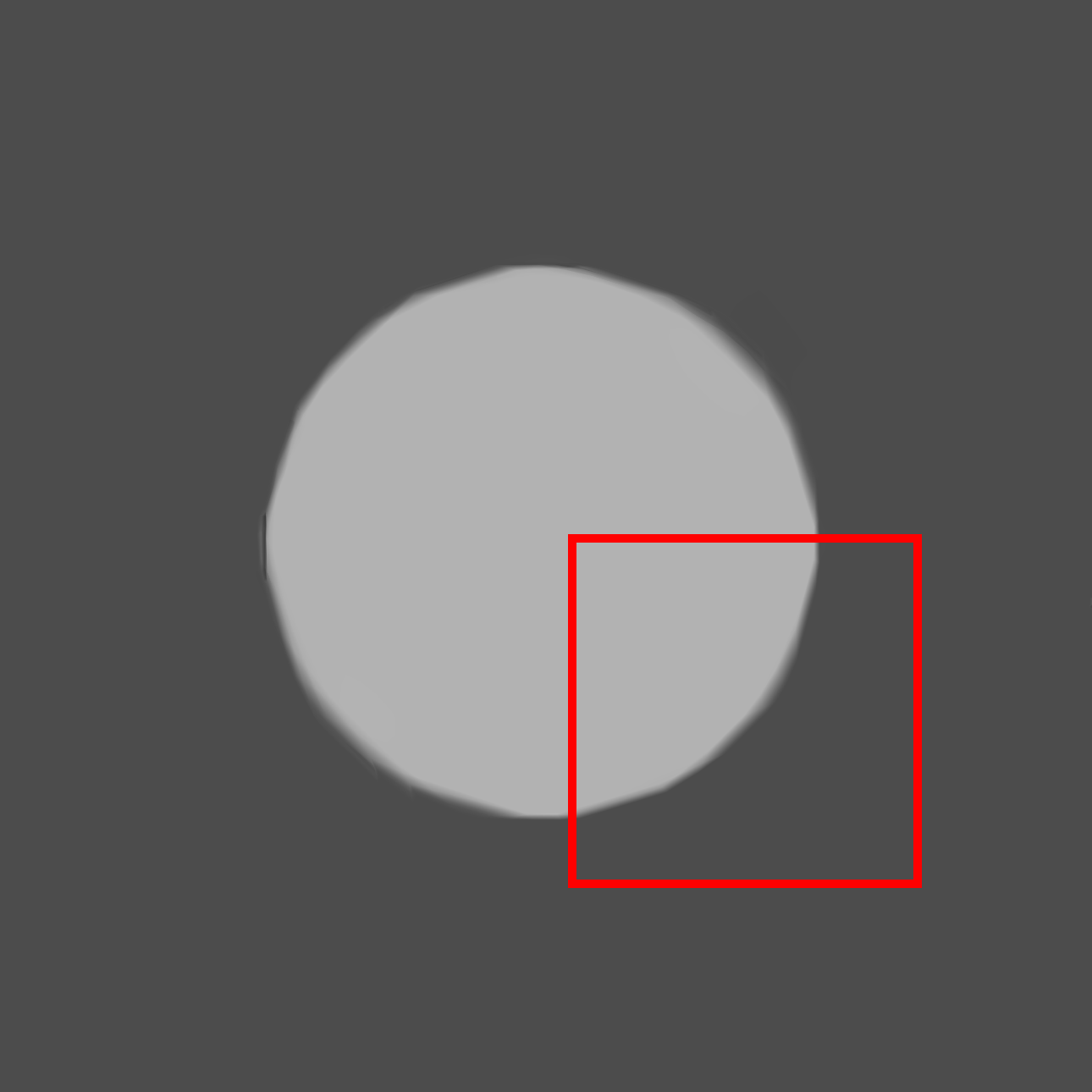}
\includegraphics[width=\imgwidth]{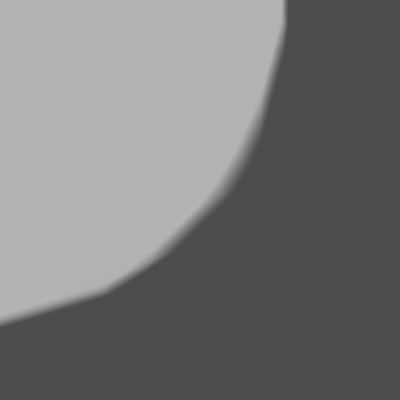}

\includegraphics[width=\imgwidth]{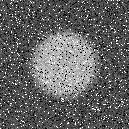}
\includegraphics[width=\imgwidth]{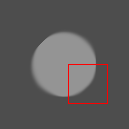}
\includegraphics[width=\imgwidth]{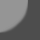}
\includegraphics[width=\imgwidth]{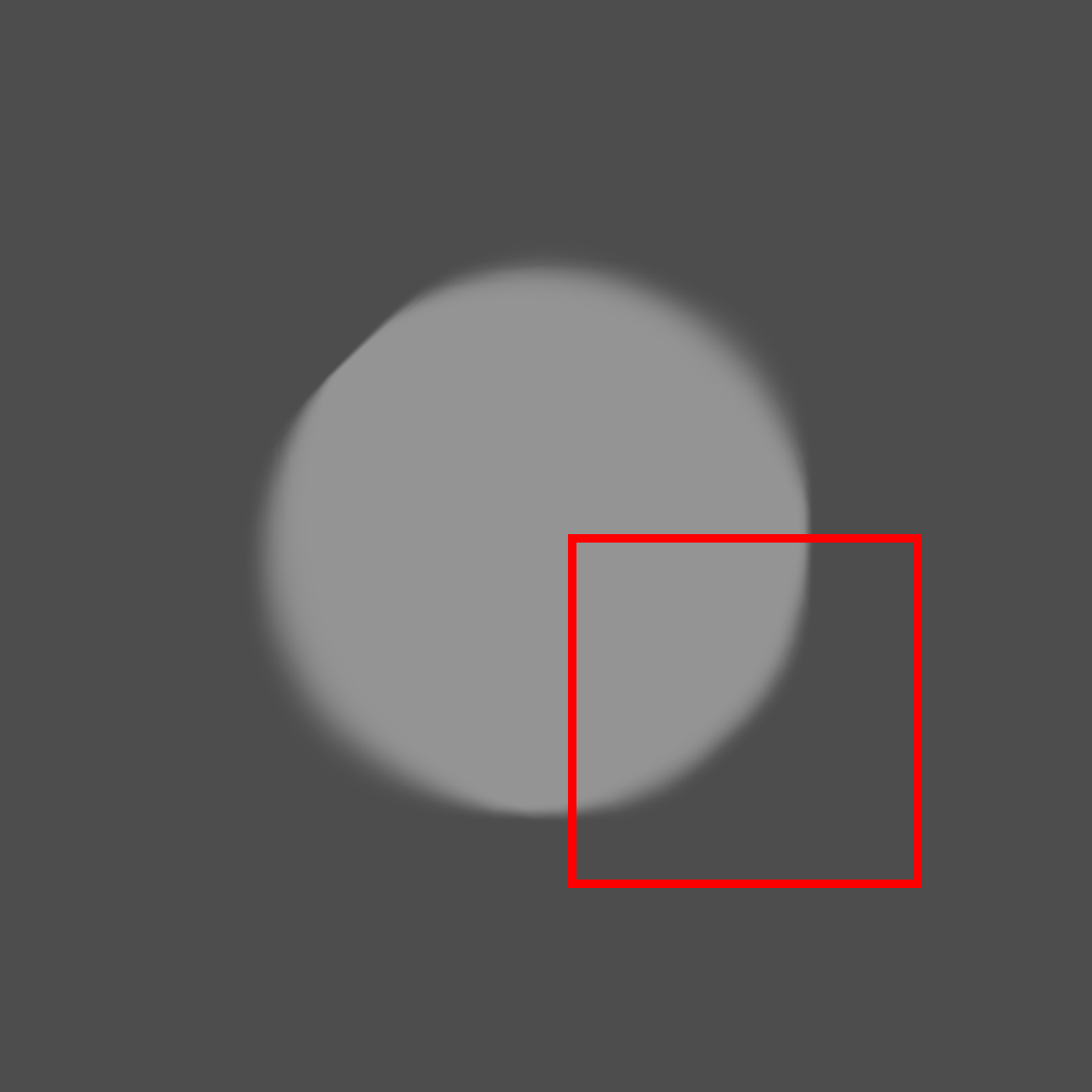}
\includegraphics[width=\imgwidth]{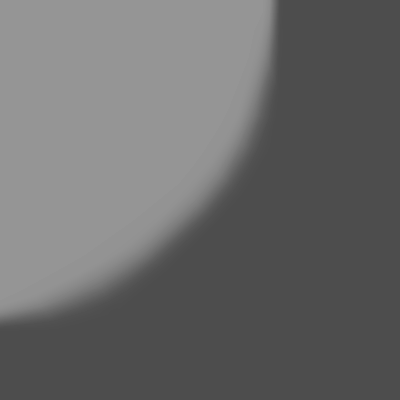}

\caption{Columns from left to right: Corrupted image ($129 \times 129$ pixels), reconstruction plotted with $129 \times 129$ pixels, zoomed reconstruction of highlighted area, reconstruction plotted on $10\times$ finer grid ($1290 \times 1290$ pixels), zoomed reconstruction of highlighted area of finer grid. Applications from top to bottom: denoising, inpainting, inpainting + denoising, deblurring, deblurring + denoising.}\label{Fig:Applications:Circle}
\end{figure}

\graphicspath{{./Scripts/Lund/}}
\renewcommand{\imgwidth}{0.19\linewidth}
\begin{figure}
\includegraphics[width=\imgwidth]{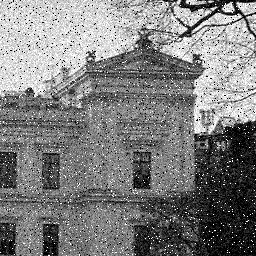}
\includegraphics[width=\imgwidth]{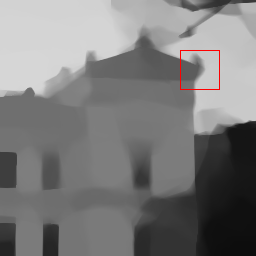}
\includegraphics[width=\imgwidth]{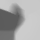}
\includegraphics[width=\imgwidth]{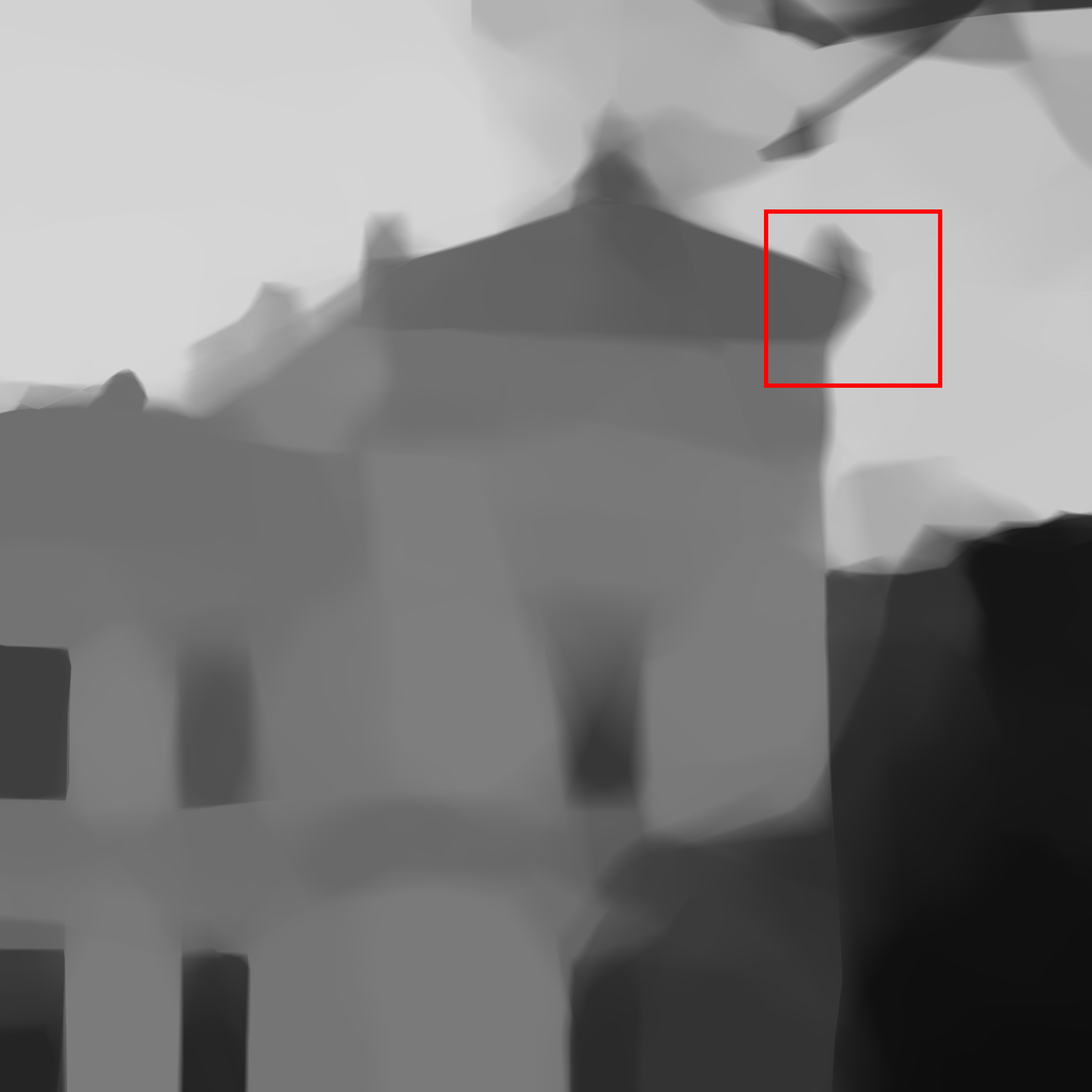}
\includegraphics[width=\imgwidth]{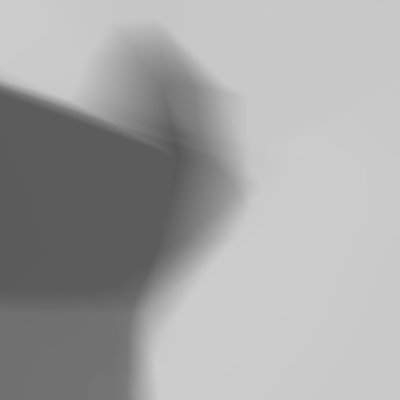}

\includegraphics[width=\imgwidth]{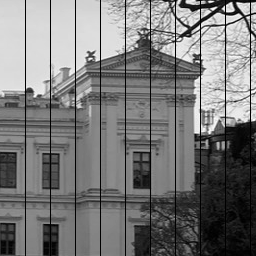}
\includegraphics[width=\imgwidth]{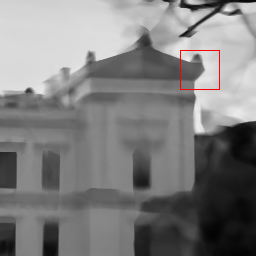}
\includegraphics[width=\imgwidth]{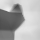}
\includegraphics[width=\imgwidth]{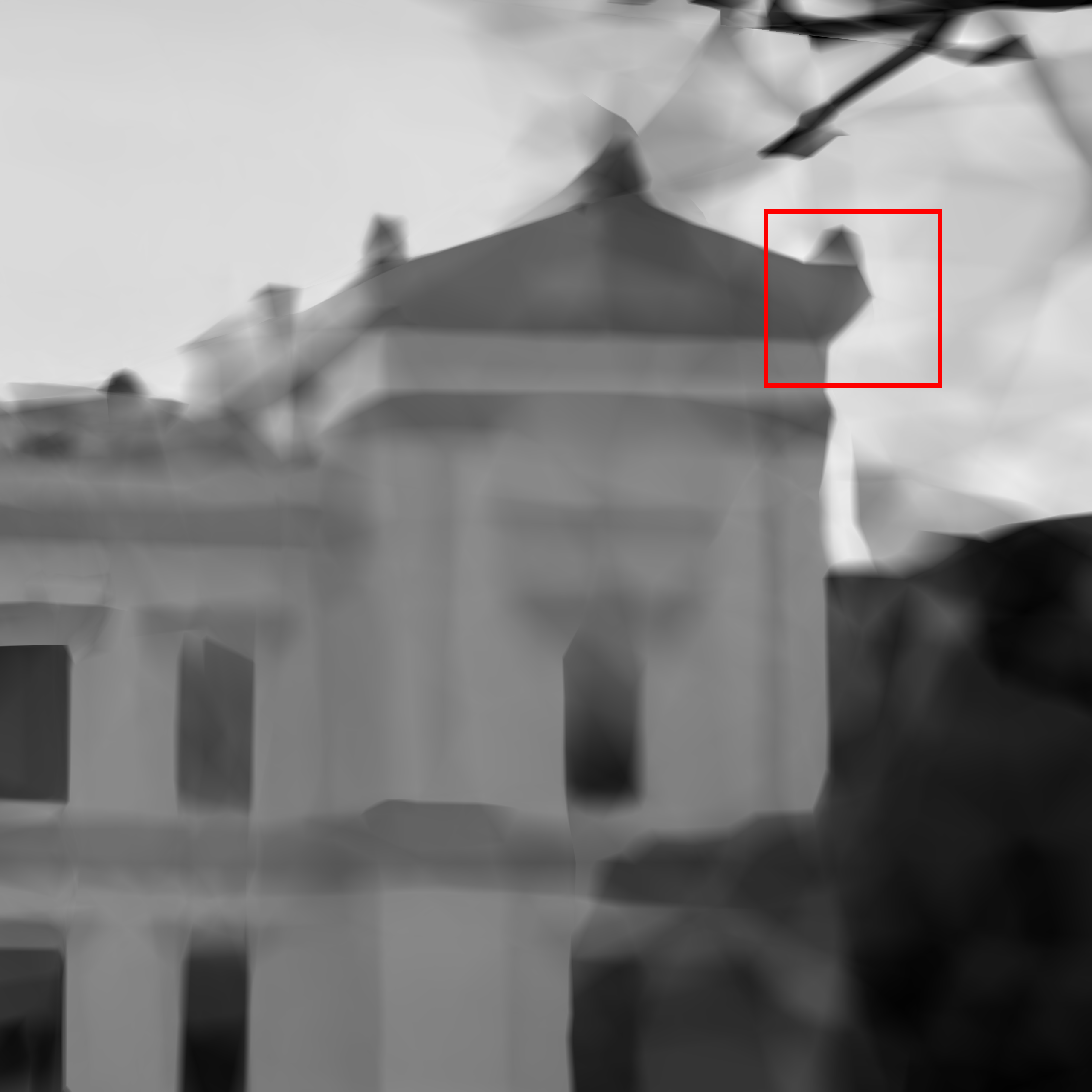}
\includegraphics[width=\imgwidth]{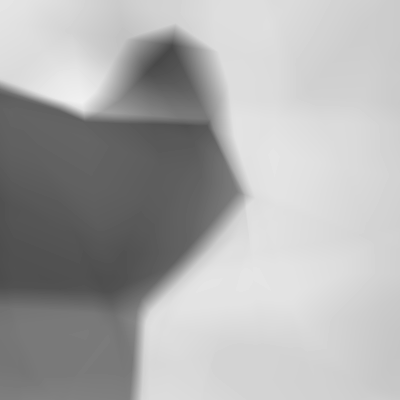}

\includegraphics[width=\imgwidth]{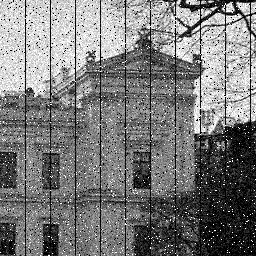}
\includegraphics[width=\imgwidth]{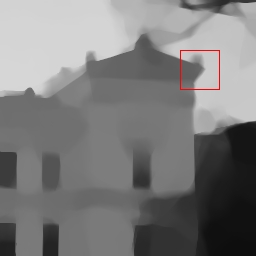}
\includegraphics[width=\imgwidth]{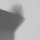}
\includegraphics[width=\imgwidth]{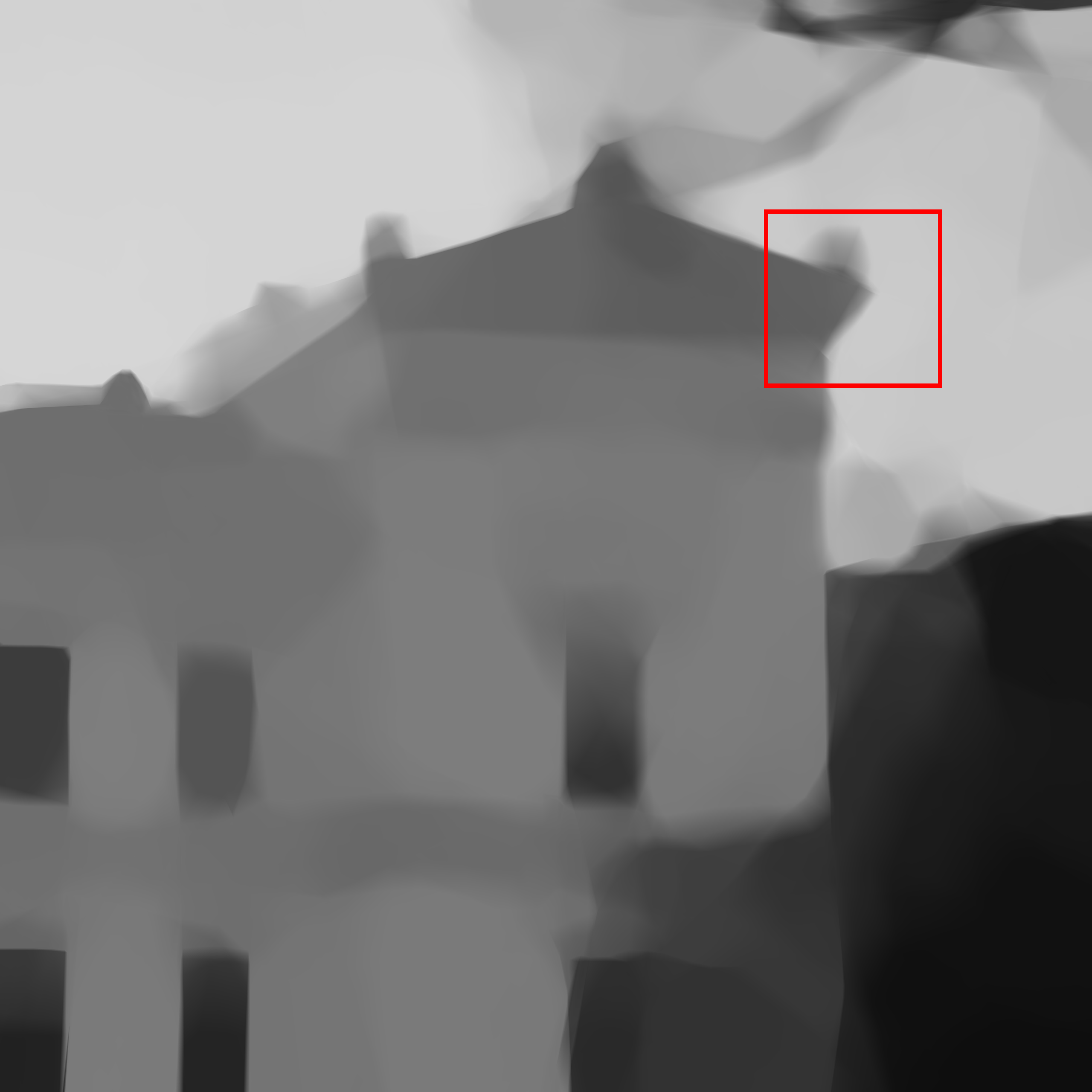}
\includegraphics[width=\imgwidth]{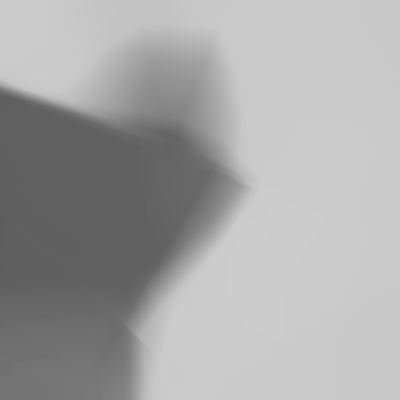}

\includegraphics[width=\imgwidth]{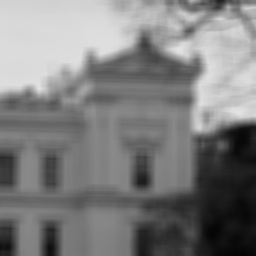}
\includegraphics[width=\imgwidth]{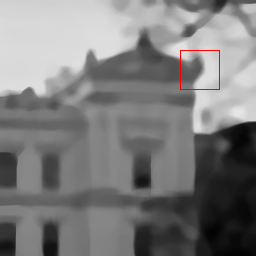}
\includegraphics[width=\imgwidth]{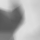}
\includegraphics[width=\imgwidth]{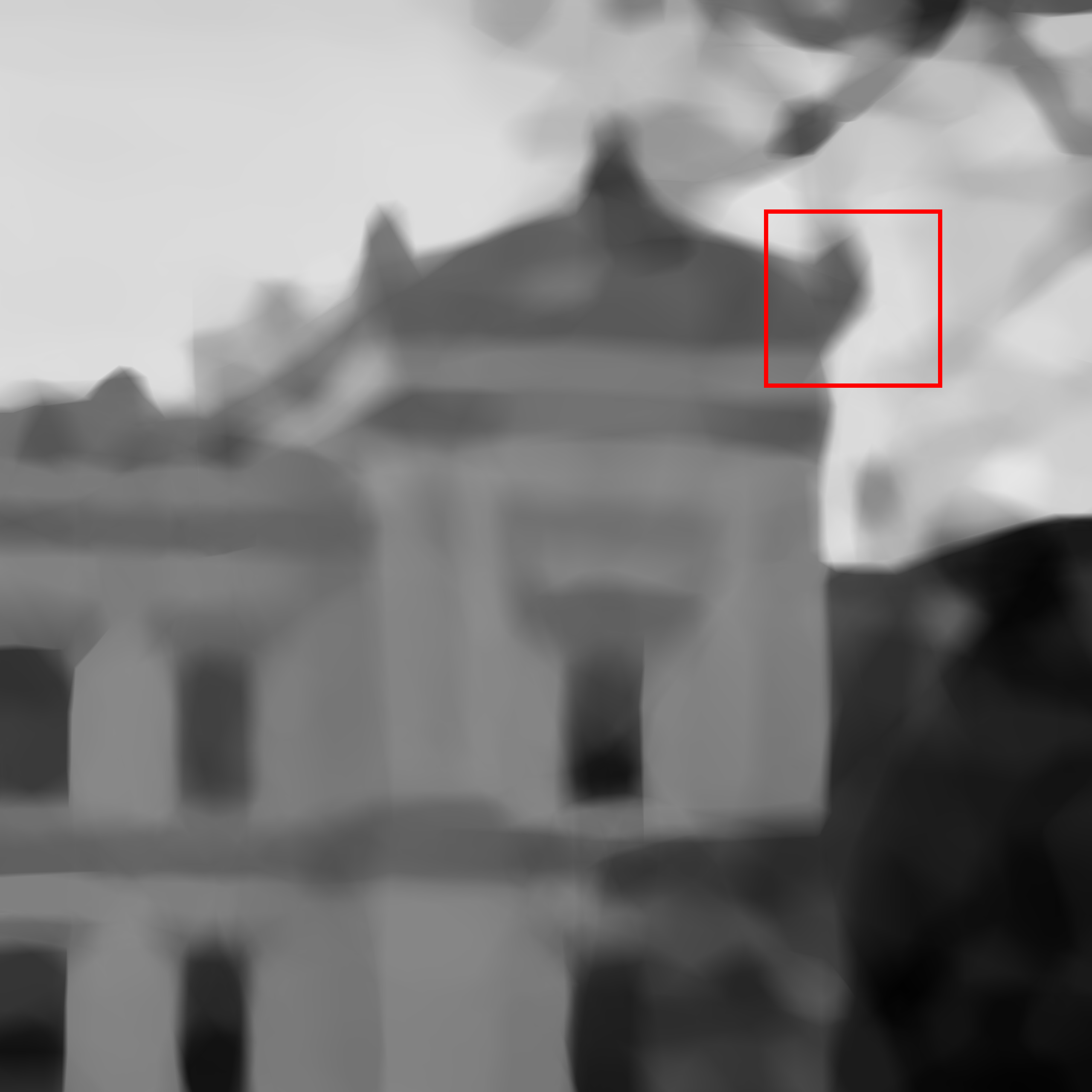}
\includegraphics[width=\imgwidth]{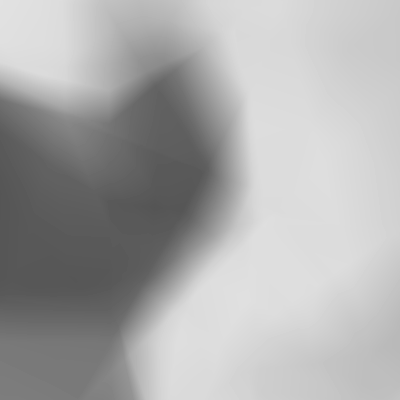}

\includegraphics[width=\imgwidth]{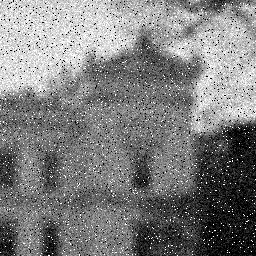}
\includegraphics[width=\imgwidth]{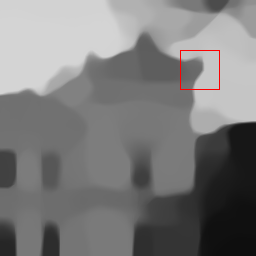}
\includegraphics[width=\imgwidth]{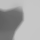}
\includegraphics[width=\imgwidth]{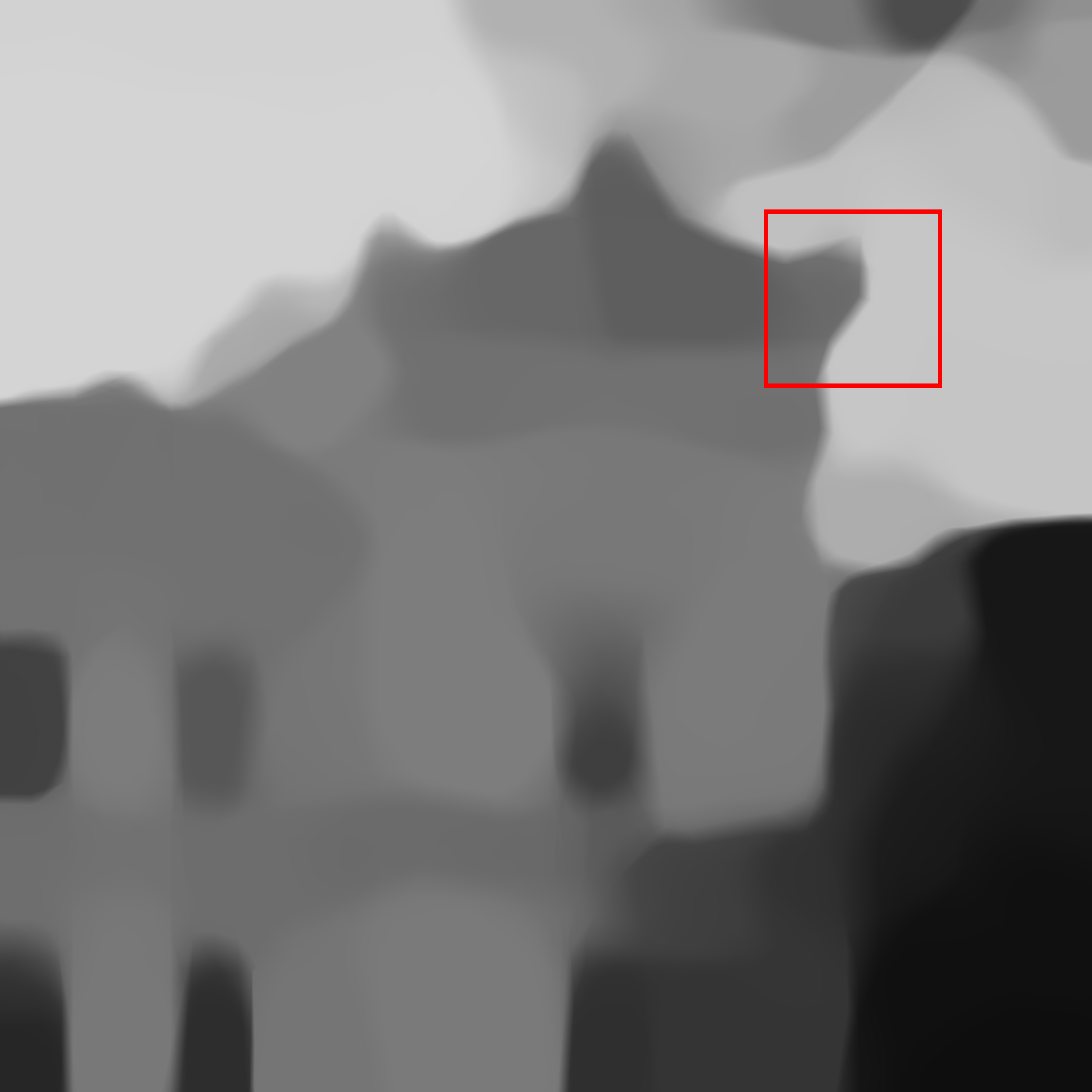}
\includegraphics[width=\imgwidth]{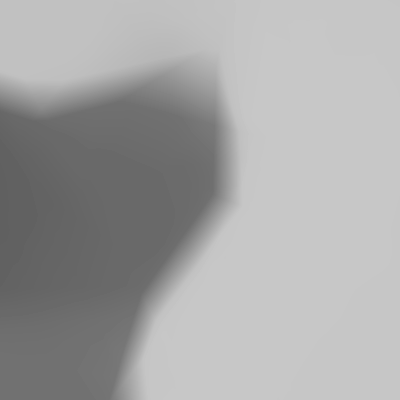}

\caption{Columns from left to right: Corrupted image ($256 \times 256$ pixels), reconstruction plotted with $256 \times 256$ pixels, zoomed reconstruction of highlighted area, reconstruction plotted on $10\times$ finer grid ($2560 \times 2560$ pixels), zoomed reconstruction of highlighted area of finer grid. Applications from top to bottom: denoising, inpainting, inpainting + denoising, deblurring, deblurring + denoising.}\label{Fig:Applications:Building}
\end{figure}

\section{Conclusion and discussion}\label{Sec:Conclusion}
We have seen that the considered neural network approach changes the nature of the original problem. In particular, while the $L^1$-$L^2$-TV model is convex, its neural network version \eqref{Eq:ProblemHw} is not convex anymore. Further we observe that even if the original problem has a solution the resulting neural network optimization problem does in general not, cf. \cref{Example:Counterexample}. Hence, it seems that the neural network approach renders the problem more complicated to deal with. Note that the non-existence of a solution can be easily resolved, as we did by introducing \eqref{Eq:ProblemHw:constrained}. However, the non-convexity seems to be persistent and we are not aware how to circumvent this property. Nevertheless, this additional complexity seems to come with a benefit. 
First, 
while optimizing on a rather coarse grid, we actually obtain a solution in a continuum, i.e.\ the solution is mesh-free and continuous, allowing us to depict the solution on an arbitrary fine grid and any domain. 
Second, it seems that due to the nature of ReLU-NNs on a scale finer than the given data resolution, structures in the image are promoted which are attractive to humans. It is not clear to us why this is the case and further studies in this direction might be needed. 

In order to guarantee the existence of a (neural network) solution we introduced the constrained problem \eqref{Eq:ProblemHw:constrained}. In particular \eqref{Eq:ProblemHw:constrained} has a solution in $\mcH$ as long as $c< \infty$. 
However, the constraint $|\theta|_\infty \le c$ seems mainly theoretical. In particular, since our solution process relies on an iterative (stochastic) method,
numerically we do not expect to reach the analytic solution exactly anyway, but only approximate it. 
In the case that the solution space is $\mcH$, i.e.\ $c=\infty$, we would obtain an approximation in the space of piecewise affine functions which is dense in the space $BV$. 
Hence, solving the optimization problem over $\mcH$ seems numerically sufficient, as the numerical solution in $\mcH$ can arbitrarily closely approximate the exact $BV$ solution, assuming that $M$ is sufficiently large, and a stopping criterion, as the number of iterations, will always allow the numerical process to terminate with an approximation. 

Although we used in our experiments the number of iterations as a stopping criterion, it does not seem ideal, as we do not know how much progress has been already made in the solution process. Also our derived error estimate seems not to be applicable in a stopping criterion, due to its oscillatory behavior. Finding a suitable stopping criterion for non-smooth optimization problems is in general a difficult task in numerical analysis. As such problems do not allow for usual derivatives, it is difficult to derive numerically usable optimality conditions.

We conclude by emphasizing that the presented neural network approach can be applied to any optimization problem of the form
$$
\inf_{u\in V} F(u),
$$
where $V$ is a Banach space and $F:V \to \overline{\R}$, and any set of neural networks.

\paragraph*{Acknowledgement}
The work of A.L. was supported by the Crafoord Foundation
through the project ``Locally Adaptive Methods for Free Discontinuity Problems''.

\appendix

\section{Explicit solution of the $L^1$-$L^2$-TV problem for a simple 1D example}\label{AppendixA}
 Let $\Omega=(-l_\ell,l_u)$ with $l_\ell,l_u>0$ and $g$ be the function in \eqref{Eq:Example:g}. We compute the explicit solution of \eqref{Eq:ProblemTV} when $T=I$ is the identity operator and $\alpha_1,\alpha_2 > 0$, $\lambda=1$. One can reason that every minimizer $u$ has to be a step function of the form 
 \begin{equation*}
 u(x)=\begin{cases}
 c_1 &\text{if } x\in [0,l_u),\\
c_2 &\text{if } x\in (-l_\ell,0),
 \end{cases}
 \end{equation*}
 with $0\leq c_2 \leq c_1 \leq 1$. Therefore we just need to minimize
 \begin{equation*}
 \min_{c_1,c_2} \alpha_1  l_\ell |c_2 - 0| + \alpha_1 l_u |c_1 - 1| + \alpha_2 l_\ell |c_2-0|^2 + \alpha_2 l_u |c_1-1|^2 + |c_1 - c_2|.
 \end{equation*}
\begin{itemize}
\item Assume $0 < c_2 < c_1 < 1$: Then the optimality conditions for $c_1$ and $c_2$ are given by
\begin{align*}
-\alpha_1 l_u + 2 \alpha_2(c_1-1)l_u + 1 = 0\\
\alpha_1 l_\ell + 2 \alpha_2 c_2 l_\ell - 1 = 0
\end{align*}
and hence
\begin{equation}\label{Eq:c1c2}
c_2 = \frac{1 - \alpha_1 l_\ell}{2\alpha_2 l_\ell} \qquad \text{ and } \qquad 
c_1 = \frac{\alpha_1 l_u - \lambda + 2 \alpha_2 l_u}{2 \alpha_2 l_u} = 1 - \frac{ 1 - \alpha_1 l_u}{2 \alpha_2 l_u}.
\end{equation}
Since $c_2 < c_1$ this means
\begin{equation*}
\begin{split}
1 - \frac{ 1 - \alpha_1 l_u}{2 \alpha_2 l_u} > \frac{1 - \alpha_1 l_\ell}{2\alpha_2 l_\ell} \ \Longleftrightarrow \ \alpha_1 + \alpha_2 > \frac{l_u + l_\ell}{2l_ul_\ell}.
\end{split}
\end{equation*}
\item Assume $c_2 = c_1 =: C$: Then one can argue that $0<C<1$ and the optimality condition for $C$ is given by
\begin{align*}
\alpha_1 l_\ell-\alpha_1 l_u  + 2 \alpha_2 l_u C   + 2 \alpha_2 l_u (C-1)  = 0
\end{align*}
and hence
\begin{equation*}
C=\frac{2\alpha_2 l_u + \alpha_1(l_u-l_\ell)}{2\alpha_2 (l_\ell + l_u)}.
\end{equation*}
\end{itemize} 
From \eqref{Eq:c1c2} we observe that $c_2=0$ if $\alpha_1 \ge \frac{1}{l_\ell}$ and that $c_1 = 1$ if $\alpha_1 \ge \frac{1}{l_u}$.
Putting this and the above considerations together yields
\begin{equation*}
c_1 = \begin{cases}
1 & \text{if } \alpha_1 \ge \frac{1}{l_u},\\
1- \frac{1 - \alpha_1 l_u}{2\alpha_2 l_u} & \text{if } \frac{1}{l_u} >  \alpha_1  > \frac{l_u + l_\ell}{2l_ul_\ell} - \alpha_2,\\
\frac{2\alpha_2 l_u + \alpha_1(l_u-l_\ell)}{2\alpha_2 (l_\ell + l_u)} & \text{if } \frac{l_u + l_\ell}{2l_ul_\ell} - \alpha_2 \geq \alpha_1
\end{cases} 
\end{equation*}
and 
\begin{equation*}
 c_2 =\begin{cases}
0 & \text{if } \alpha_1 \ge \frac{1}{l_\ell},\\
\frac{1 - \alpha_1 l_\ell}{2\alpha_2 l_\ell} & \text{if } \frac{1}{l_\ell} >  \alpha_1  > \frac{l_u + l_\ell}{2l_ul_\ell} - \alpha_2,\\
\frac{2\alpha_2 l_u + \alpha_1(l_u-l_\ell)}{2\alpha_2 (l_\ell + l_u)} & \text{if } \frac{l_u + l_\ell}{2l_ul_\ell} - \alpha_2 \geq \alpha_1.
\end{cases}
\end{equation*}

\bibliographystyle{abbrv}
\bibliography{nmh}
\end{document}